\DeclareFontFamily{OT2}{cmr}{\hyphenchar\font45 }
\DeclareFontShape{OT2}{cmr}{m}{n}{
<5><6><7><8><9>gen*wncyr
<10><10.95><12><14.4><17.28><20.74><24.88>wncyr10}{}
\DeclareFontShape{OT2}{cmr}{b}{n}{
<5><6><7><8><9>gen*wncyb
<10><10.95><12><14.4><17.28><20.74><24.88>wncyb10}{}
\DeclareMathAlphabet{\mathcyr}{OT2}{cmr}{m}{n}
\DeclareMathAlphabet{\mathcyb}{OT2}{cmr}{b}{n}
\SetMathAlphabet{\mathcyr}{bold}{OT2}{cmr}{b}{n}
\numberwithin{equation}{section}
\newcommand{\sh}{\mathbin{\mathcyr{sh}}}
\renewcommand{\bf}{\mathbf{f}}
\newcommand{\bh}{\mathbf{h}}
\newcommand{\bk}{\mathbf{k}}
\newcommand{\bl}{\mathbf{l}}
\newcommand{\bs}{\mathbf{s}}
\newcommand{\bt}{\mathbf{t}}
\newcommand{\bp}{\boldsymbol{p}}
\newcommand{\bx}{\mathbf{x}}
\newcommand{\etab}{\boldsymbol{\eta}}
\newcommand{\emp}{\varnothing}
\newcommand{\bbZ}{\mathbb{Z}}
\newcommand{\bbQ}{\mathbb{Q}}
\newcommand{\bbR}{\mathbb{R}}
\newcommand{\bbC}{\mathbb{C}}
\newcommand{\cA}{\mathcal{A}}
\newcommand{\cF}{\mathcal{F}}
\newcommand{\cP}{\mathcal{P}}
\newcommand{\cR}{\mathcal{R}}
\newcommand{\cS}{\mathcal{S}}
\newcommand{\cZ}{\mathcal{Z}}
\newcommand{\cRS}{\mathcal{RS}}
\newcommand{\hcA}{\widehat{\mathcal{A}}}
\newcommand{\hcS}{\widehat{\mathcal{S}}}
\newcommand{\hcF}{\widehat{\mathcal{F}}}
\newcommand{\hcRS}{\widehat{\mathcal{RS}}}
\newcommand{\fmpl}{\text{\rm\pounds}}
\newcommand{\Li}{\mathrm{Li}}
\newcommand{\fH}{\mathfrak{H}}
\newcommand{\tp}{\tilde{p}}
\newcommand{\tq}{\tilde{q}}
\newcommand{\tr}{\tilde{r}}
\newcommand{\dch}{\mathrm{dch}}
\newcommand{\rt}{\mathrm{rt}}
\renewcommand{\mod}{\mathrm{mod}}
\newcommand{\tzero}{\tilde{0}}
\newcommand{\tone}{\tilde{1}}
\newcommand{\har}{\mathrm{har}}
\newcommand{\h}{\mathrm{h}}
\newcommand{\vv}[2]{\begin{pmatrix}#1\\ #2\end{pmatrix}}
\newcommand{\jump}[1]{\ensuremath{[\![#1]\!]}}
\newtheorem{theorem}{Theorem}[section]
\newtheorem{proposition}[theorem]{Proposition}
\newtheorem{lemma}[theorem]{Lemma}
\newtheorem{corollary}[theorem]{Corollary}
\theoremstyle{definition}
\newtheorem{definition}[theorem]{Definition}
\theoremstyle{remark}
\newtheorem{remark}[theorem]{Remark}
\newtheorem{example}[theorem]{Example}
\title{Iterated Integrals associated with colored rooted trees}
\author{Hanamichi Kawamura}
\address[Hanamichi Kawamura]{Department of Mathematics, Faculty of Science Division I, Tokyo University of Science, 1-3 Kagurazaka, Shinjuku-ku, Tokyo, 162-8601, Japan}
\email{1121026@ed.tus.ac.jp}
\subjclass[2020]{11M32, 05C05.}
\keywords{iterated integrals, $2$-colored rooted trees, finite multiple polylogarithms, symmetric multiple polylogarithms}
\begin{document}
\begin{abstract}
  In this paper, we introduce iterated integrals associated with colored rooted trees and give proofs for the shuffle relations for $\bp$-adic finite and $t$-adic symmetric polylogarithms. This method generalizes the theory of the finite multiple zeta values associated with $2$-colored rooted trees introduced by Ono.
\end{abstract}
\maketitle
\section{Introduction}
A \emph{$2$-colored rooted tree} is introduced by Ono \cite{ono17} as a tree (in the graph-theoretic meaning) equipped with certain data about coloring and distinguishing vertices. The purpose of introducing them in \cite{ono17} is to prove the shuffle relation for finite multiple zeta values. Recently, by Ono--Seki--Yamamoto \cite{osy21}, this theory is found applicable for the shuffle relation of ($t$-adic) symmetric multiple zeta values. Moreover, Ono \cite{ono22} also proved some purely algebraic identities by using the theory of $2$-colored rooted trees. 
Besides, the shuffle relations for finite and symmetric multiple zeta values can be also proved by their expressions as iterated integrals (\cite{hirose20} for the symmetric case). From this perspective, we generalize the theory of finite and symmetric multiple zeta values associated with $2$-colored colored rooted trees to general iterated integrals. The following are the main result of this paper, which generalizes the $\bp$-adic shuffle relation \cite[Theorem 6.4]{seki17}, \cite[Proposition 2.3.3 (ii)]{jarossay19} and $t$-adic shuffle relation \cite[(1.3)]{osy21}, \cite[Proposition 2.3.1]{jarossay19} to multiple polylogarithms. See \S\ref{sec_mlvs} for precise definitions of the symbols appearing below.
\begin{theorem}[$=$ Theorem \ref{thm_p_shuffle} for $\cF=\cA$, Theorem \ref{thm_t_shuffle} for $\cF=\cS$]
  Let $N,k_{1},\ldots,k_{r},l_{1},\ldots,l_{s}$ be positive integers, $x_{1},\ldots,x_{r},y_{1},\ldots,y_{s}$ indeterminates, $\alpha\in\bbZ/N\bbZ$ and $\cF$ either $\cA$ or $\cS$. Put
  \[\Lambda_{\cF}\coloneqq\begin{cases}\bp & \text{if }\cF=\cA,\\ t & \text{if } \cF=\cS,\end{cases}\qquad\text{and}\qquad d_{\cF}\coloneqq\begin{cases}\bp & \text{if } \cF=\cA,\\ \alpha & \text{if }\cF=\cS.\end{cases}\]
  Then we have
  \begin{align}
    &\fmpl_{\hcF,\alpha}\left(\vv{x_{1},\ldots,x_{r}}{k_{1},\ldots,k_{r}}\sh\vv{y_{1},\ldots,y_{s}}{l_{1},\ldots,l_{s}}\right)\\
    &\qquad=(-1)^{l_{1}+\cdots+l_{s}}y_{1}^{d_{\cF}}\sum_{f_{1},\ldots,f_{s}\ge 0}\left(\prod_{i=1}^{s}\binom{l_{i}+f_{i}-1}{f_{i}}\right)\fmpl_{\hcF,\alpha}\vv{x_{1}/y_{1},\ldots,x_{r}/y_{1},1/y_{1},y_{s}/y_{1},\ldots,y_{2}/y_{1}}{k_{1},\ldots,k_{r},l_{s}+f_{s},\ldots,l_{1}+f_{1}}\Lambda_{\cF}^{f_{1}+\cdots+f_{s}}.
\end{align}
Here the power $y_{1}^{\alpha}$ means $y_{1}^{n}$ by using the unique element $n\in\{0,1,\ldots,N-1\}$ in the equivalent class $\alpha$.
\end{theorem}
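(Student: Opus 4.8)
The plan is to realize both sides of the identity as the value $\fmpl_{\hcF,\alpha}$ of one and the same colored rooted tree, and to pass between the two sides by a single surgery move on that tree; this is exactly what the tree formalism developed in the body of the paper is designed for, and the cases $\cF=\cA$ and $\cF=\cS$ are run in parallel, which is why the statement is split into Theorems~\ref{thm_p_shuffle} and~\ref{thm_t_shuffle}. For the left-hand side, recall that a \emph{caterpillar} --- a path carrying the pairs (color, index) in order with its root at one end --- has $\fmpl_{\hcF,\alpha}$-value equal to the multiple polylogarithm $\fmpl_{\hcF,\alpha}\vv{x_{1},\ldots,x_{r}}{k_{1},\ldots,k_{r}}$ by construction, while the iterated-integral nature of the construction makes the tree-value ``multiplicative in the legs at the root'': the labelings of two legs growing from a common root are order-independent, and the linear extensions of the resulting poset are precisely the shuffles of the two chains. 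Hence the value of the \emph{wedge} tree $T_{0}$ obtained by joining the caterpillar for $\vv{x_{1},\ldots,x_{r}}{k_{1},\ldots,k_{r}}$ and the caterpillar for $\vv{y_{1},\ldots,y_{s}}{l_{1},\ldots,l_{s}}$ at a common root equals $\fmpl_{\hcF,\alpha}\bigl(\vv{x_{1},\ldots,x_{r}}{k_{1},\ldots,k_{r}}\sh\vv{y_{1},\ldots,y_{s}}{l_{1},\ldots,l_{s}}\bigr)$, identifying the left-hand side with $\fmpl_{\hcF,\alpha}(T_{0})$. The underlying graph of $T_{0}$ is itself a path, but with the root at an interior vertex rather than at an end.

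For the right-hand side I would evaluate $\fmpl_{\hcF,\alpha}(T_{0})$ a second way, by moving the root to the far endpoint of the $y$-leg. After this move $T_{0}$ becomes the genuine caterpillar whose data is the $x$-leg in its original order, then a vertex with color $1/y_{1}$, then the $y$-leg in reverse order, with every color rescaled by $y_{1}$ --- that is, exactly the caterpillar read off by the argument in the first paragraph from the right-hand side. Moving the root along an edge reverses the tree order along the traversed path, which on the level of iterated integrals is the change of base point governed by the path-reversal formula; but in the $\Lambda_{\cF}$-adic world this is not an equality of single values: the $\Lambda_{\cF}$-adic expansion of the reflected differential forms produces the geometric series $(1-\Lambda_{\cF})^{-l}=\sum_{f\ge0}\binom{l+f-1}{f}\Lambda_{\cF}^{\,f}$ at each step. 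Carried out along the whole $y$-leg this yields precisely the sign $(-1)^{l_{1}+\cdots+l_{s}}$ (one per reversed one-form), the product $\prod_{i}\binom{l_{i}+f_{i}-1}{f_{i}}$ with shifted indices $l_{i}+f_{i}$, the factor $\Lambda_{\cF}^{f_{1}+\cdots+f_{s}}$, and the boundary factor $y_{1}^{d_{\cF}}$, which combine to the claimed formula.

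The main obstacle is the middle step: making the re-rooting relation precise as an identity among colored tree-values in the $\Lambda_{\cF}$-adic setting, and controlling the bookkeeping it generates --- the binomial coefficients coming from the reflected forms and, above all, the boundary factor $y_{1}^{d_{\cF}}$, which genuinely differs in the two cases. When $\cF=\cA$ it is the simultaneous $p$-th power $y_{1}^{\bp}$, reflecting the involution $m\mapsto p-m$ on the truncated index set; when $\cF=\cS$ it is the twist $y_{1}^{\alpha}$, reflecting the $N$-th-root-of-unity twist at the boundary. Isolating the combinatorial core common to $\cF=\cA$ and $\cF=\cS$, so that the $\bp$-adic finite statement and the $t$-adic symmetric statement both fall out of one argument, is what requires the most care.
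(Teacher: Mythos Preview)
Your plan is essentially the paper's own: build the wedge tree with two legs meeting at a common root, identify its $\fmpl_{\hcF,\alpha}$-value with the shuffle on the left via Theorem~\ref{thm_shuffle_theorem}, then re-root at the far end of the $y$-leg (Proposition~\ref{prop_root_change}) and read off the caterpillar on the right. The only point where the paper's execution is more involved than your sketch suggests is that the two cases are not handled by a single uniform ``$\Lambda_{\cF}$-adic root-change formula'': for $\cF=\cA$ the binomial series arises from explicitly $p$-adically expanding the cross terms $I_{\gamma}(Y_{i,j})I_{\gamma}(Z_{i,j})$ produced by Proposition~\ref{prop_root_change}, while for $\cF=\cS$ one passes through the refined object $\fmpl_{\hcRS,\alpha}$ and works modulo $2\pi i$ so that those cross terms vanish---but you correctly flag this step as the main obstacle.
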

This paper is organized as follows: \S\ref{sec_def} is devoted to recall basic preliminaries about $2$-colored rooted trees and the associated zeta values. In \S\ref{sec_ii}, we introduce $D$-colored rooted trees and the associated iterated integrals. Each property appearing in \S\ref{sec_def} is generalized to the iterated integrals in \S\ref{sec_ii} and following \S\ref{sec_mlvs}. The last section also provides some applications for the $\bp$-adic and $t$-adic multiple polylogarithms and $L$-values.
\section*{Acknowledgements}
The author would like to express his sincere gratitude to Prof.~Masataka Ono, Prof.~Shin-ichiro Seki, Yoshiyuki Miyamori, Yuta Tanaka, Taiga Ozaki, Junichi Yokoyama and Honami Sakamoto for their careful reading of the manuscript. He is also thankful to Taizen Suzuki and Takumi Maesaka for their helpful comments.
\section{Definitions and known results}\label{sec_def}
In this paper, a \emph{tree} means a finite connected acyclic graph. In a tree $(V,E)$, if an edge $e\in E$ connects vertices $v$ and $w$ then we write $e=\{v,w\}$. For a vertices $v$ and $w$, denote the path between them by $P(v,w)$. The \emph{degree} $\deg(v)$ of a vertex $v$ in a tree is a number of edges connected to $v$. A \emph{terminal} (resp.~\emph{branched vertex}) is a vertex whose degree is $1$ (resp.~greater than $2$). For a vertex $v$, we put $N_{v}\coloneqq\{w\in V\mid\{v,w\}\in E\}$ and say that its element is \emph{adjacent} to $v$. Here and in what follows, when more than one trees appear simultaneously, we assume that they are mutually disjoint unless noted otherwise.
\begin{definition}[{\cite[Definition 1.2]{ono17}}, {\cite[Definition 3.14]{osy21}}]\label{def_2crt}\phantom{}
  \begin{enumerate}[(1)]
    \item A \emph{$2$-colored rooted tree} $X=(V,E,\rt,V_{\bullet})$ consists of below data:
  \begin{itemize}
    \item a tree $(V,E)$.
    \item a vertex $\rt\in V$, called the \emph{root} of $X$.
    \item a subset $V_{\bullet}\subseteq V$ including all terminals of $(V,E)$.
  \end{itemize}
  A vertex or edge of $X=(V,E,\rt,V_{\bullet})$ means them of the ``underlying'' tree $(V,E)$ of $X$. We write $V_{\circ}\coloneqq V\setminus V_{\bullet}$ for such $X$. For a given vertex $v\in V$, the \emph{parent} $p_{v}$ of $v$ is the adjacent vertex of $v$ which is nearest to the root.
  \item\label{def_2crt_index} An \emph{index} on a $2$-colored rooted tree $X=(V,E,\rt,V_{\bullet})$ is a map $\bk\colon E\to\bbZ_{\ge 0}$. We usually denote $\bk(\{v,p_{v}\})$ by $\bk_{v}$ for each vertex $v$ (we put $\bk_{\rt}\coloneqq 1$ for later convenience). We say such an index is \emph{essentially positive} if $\sum_{e\in P(v,w)}\bk(e)$ is positive for all $v$, $w\in V_{\bullet}$ ($v\neq w$).  
  \item A $2$-colored rooted tree $X=(V,E,\rt,V_{\bullet})$ is called a \emph{linear $2$-colored rooted tree} (or simply \emph{linear tree}) if it satisfies all of
  \begin{itemize}
    \item the root is a terminal of $V$.
    \item the degree of a vertex is always $1$ or $2$.
    \item $V_{\bullet}=V$.
  \end{itemize}
  \item\label{def_harvestability} We say that a pair $(X;\bk)$ consisting of a $2$-colored rooted tree $X=(V,E,\rt,V_{\bullet})$ and an index $\bk$ on $X$ is \emph{harvestable} if all of the following conditions are satisfied:
  \begin{itemize}
    \item the root is a terminal (thus $\rt\in V_{\bullet}$).
    \item $\deg(v)\le 2$ (resp.~$\ge 3$) for any $v\in V_{\bullet}$ (resp.~$v\in V_{\circ}$).
    \item For $v\in V$, if $p_{v}\in V_{\circ}$ then $\bk_{v}>0$.
    \item For $v\in V_{\bullet}$, if $p_{v}\in V_{\bullet}$ then $\bk_{v}>0$.
  \end{itemize}
  It is clear that a harvestable pair $(X;\bk)=(V,E,\rt,V_{\bullet})$ with a linear tree $X$ is uniquely represented as $V=\{v_{1},\ldots,v_{r+1}\}$, $E=\{\{v_{i},v_{i+1}\}\mid i=1,\ldots,r\}$, $\rt=v_{r+1}$, $V_{\bullet}=V$ and $\bk=(k_{1},\ldots,k_{r})$ with the positive integers $k_{i}\coloneqq\bk_{v_{i}}$. 
\end{enumerate}
\end{definition}
\begin{remark}
  Note that the requirements in \eqref{def_harvestability} are same as that of \cite[Definition 3.14]{osy21}, not that of \cite[Definition 2.6]{ono17} (i.e., the index part of a harvestable pair should be always essentially positive. See \cite[Remark 3.15]{osy21}). 
\end{remark}
When we indicate a $2$-colored rooted tree as a diagram, we use the symbol $\bullet$ (resp.~$\circ$) for an element of $V_{\bullet}$ (resp.~$V_{\circ}$). The square symbol ($\blacksquare$ or $\square$) stands for the root. For instance, the following diagram indicates a linear tree:
\[\begin{xy}
  {(0,0)*{\blacksquare} \ar @{-} (10,0)*{\bullet}},
  {(10,0)*{\bullet} \ar @{-} (16,0)*{}},
  {(16,0)*{} \ar @{.} (30,0)*{}},
  {(30,0)*{} \ar @{-} (36,0)*{\bullet}},
  {(36,0)*{\bullet} \ar @{-} (46,0)*{\bullet}},
  {(0,2.5)*{\rt}},
  {(10,2)*{v_{r}}},
  {(36,2)*{v_{2}}},
  {(46,2)*{v_{1}}}
  \end{xy}\]
\begin{definition}
Let $\{X_{i}\}_{i}=\{(V_{i},E_{i})\}_{i=1,\ldots,n}$ be a family of mutually disjoint trees. Choose vertices $v_{1},\ldots,v_{n}$ from each tree $X_{1},\ldots,X_{n}$. Then we say that the graph
\[(V,E)\coloneqq\left(\{v\}\cup\bigcup_{i=1}^{n}(V_{i}\setminus\{v_{i}\}),\bigcup_{i=1}^{n}\{\{v,w\}\mid w\in N_{v_{i}}\}\cup(E_{i}\setminus\{\{v_{i},w\}\mid w\in N_{v_{i}}\})\right)\]
is obtained
by \emph{grafting $X_{1},\ldots,X_{n}$ along $v_{1},\ldots,v_{n}$} and write $G_{v_{1},\ldots,v_{n}}(X_{1},\ldots,X_{n})$. Here $v$ is a new vertex where $v_{1},\ldots,v_{n}$ are gathered and we call $v$ the \emph{cluster vertex}. When $n=1$, we put $G_{v}(X)=X$ for any vertex $v$. 
\end{definition}
For given rooted trees $X_{1},\ldots,X_{n}$, there is a canonical grafting by applying the above procedures along their roots. Then we omit subscripts as $G(X_{1},\ldots,X_{n})$.
For a family $\{X_{i}\}_{i}$ of $2$-colored rooted trees, we can also graft them by choosing a new root from $G(\{X_{i}\}_{i})$ (this $X_{i}$ stands for the underlying tree of $X_{i}$ by abuse of notation) and deciding whether to let the cluster vertex be in $V_{\bullet}$ or $V_{\circ}$ (the choice of cluster vertex and colors of other vertices are already given).

The following lemma is easy but gives a useful characterization of the harvestability. We prove it in Lemma \ref{p_harvestability_characterization} in a more general setting.
\begin{lemma}[{\cite[Remark 2.8]{ono17}}]\label{harvestability_characterization}
  Let $X=(V,E,\rt,V_{\bullet})$ be a $2$-colored rooted tree and $\bk$ an index on $X$ such that the pair $(X;\bk)$ is harvestable. Then there uniquely exists a finite non-empty set of pairs $\{(X_{i};\bk^{(i)})\}_{i}=\{(V_{i},E_{i},\rt_{i},V_{i,\bullet};\bk^{(i)})\}_{i=0,\ldots,r}$ such that all of the following conditions are satisfied:
\begin{itemize}
  \item $(X_{1};\bk^{(1)}),\ldots,(X_{r};\bk^{(r)})$ are harvestable.
  \item $X_{0}$ is linear.
  \item $\bk^{(0)}(e)>0$ for $e\in E_{0}\setminus\{\{\rt_{0},u_{0}\}\}$ $(u_{0}\in V_{0}\setminus\{\rt_{0}\}$ is nearest to $\rt_{0})$.
  \item $\rt$ is the farthest vertex from $\rt_{0}$ in $X_{0}$.
  \item $(V,E)=G(X_{0},\ldots,X_{r})$ and the cluster vertex of this grafting is in $V_{\circ}$.
\end{itemize}
The last condition is indicated as follows.
\begin{align}
  (X_{0};\bk^{(0)})
  &=\begin{xy}
  {(0,0)*{\blacksquare} \ar @{-} (10,0)*{\bullet}},
  {(10,0)*{\bullet} \ar @{-} (16,0)*{}},
  {(16,0)*{} \ar @{.} (30,0)*{}},
  {(30,0)*{} \ar @{-} (36,0)*{\bullet}},
  {(36,0)*{\bullet} \ar @{-} (46,0)*{\bullet}},
  {(0,3)*{\rt_{0}}},
  {(10,2)*{u_{0}}},
  {(46,2)*{\rt}}
  \end{xy}\\
  (X_{j};\bk^{(j)})
  &=\begin{xy}
    {(0,0)*{\blacksquare} \ar @{-}^{k_{j}} (10,0)*{}},
    {(14,0)*++[o][F]{T_{j}}},
    {(0,3)*{\rt_{j}}}
    \end{xy}\qquad\left(\begin{array}{c}j=1,\ldots,r,\\T_{j}\text{ stands for the subtree of }X,\\ k_{j}>0\text{ by the harvestability}\end{array}\right)\\
  (X;\bk)&=\begin{xy}
    {(0,0)*{\circ} \ar @{-} (10,0)*{\bullet}},
    {(10,0)*{\bullet} \ar @{-} (16,0)*{}},
    {(16,0)*{} \ar @{.} (30,0)*{}},
    {(30,0)*{} \ar @{-} (36,0)*{\bullet}},
    {(36,0)*{\bullet} \ar @{-} (46,0)*{\blacksquare}},
    {(0,0)*{\circ} \ar @{-}_{k_{1}} (-10,17)*++[o][F]{T_{1}}},
    {(0,0)*{\circ} \ar @{-}_{k_{j}} (-20,0)*++[o][F]{T_{j}}},
    {(0,0)*{\circ} \ar @{-}_{k_{r}} (-10,-17)*++[o][F]{T_{r}}},
    {(-10,17)*++[o][F]{T_{1}} \ar @{.} (-20,0)*++[o][F]{T_{j}}},
    {(-10,-17)*++[o][F]{T_{r}} \ar @{.} (-20,0)*++[o][F]{T_{j}}},
    {(10,2)*{u_{0}}},
    {(46,2.5)*{\rt}}
    \end{xy}
\end{align}
\end{lemma}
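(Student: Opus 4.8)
The plan is to prove existence by an explicit construction that reads the decomposition off the structure of $X$ near its root, and then to prove uniqueness by showing that every admissible decomposition must recover exactly the data of that construction. Throughout I will use the fact that, under harvestability, the degree condition forces $V_{\bullet}=\{v\in V\mid\deg(v)\le 2\}$ and $V_{\circ}=\{v\in V\mid\deg(v)\ge 3\}$; in particular a vertex is white if and only if it is a branched vertex. I will assume $V_{\circ}\neq\emptyset$ (equivalently, $X$ is not linear), since otherwise no cluster vertex in $V_{\circ}$ can exist; the linear case is the degenerate base of the recursion.

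For existence, I would start at the terminal $\rt$ and traverse the unique path leading into $X$. Since every vertex of degree $\le 2$ lies in $V_{\bullet}$, the first vertex $v$ of degree $\ge 3$ encountered lies in $V_{\circ}$, and I take it to be the cluster vertex. Writing $u_{0}\coloneqq p_{v}$ for the neighbour of $v$ on the $\rt$-side, I define $X_{0}$ to be the path $\rt_{0}-u_{0}-\cdots-\rt$ obtained from the stem between $\rt$ and $u_{0}$ by adjoining a fresh terminal root $\rt_{0}$ adjacent to $u_{0}$, all of whose vertices are coloured $\bullet$; thus $X_{0}$ is linear with $\rt$ farthest from $\rt_{0}$. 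I set $\bk^{(0)}(\{\rt_{0},u_{0}\})\coloneqq\bk_{v}$ and let $\bk^{(0)}$ agree with $\bk$ on the remaining stem edges; the harvestability clause for $V_{\bullet}$--$V_{\bullet}$ edges gives $\bk^{(0)}(e)>0$ for every $e\in E_{0}\setminus\{\{\rt_{0},u_{0}\}\}$, while the boundary edge may vanish precisely because $p_{v}=u_{0}\in V_{\bullet}$ leaves $\bk_{v}$ unconstrained. Letting $c_{1},\ldots,c_{r}$ (with $r=\deg(v)-1\ge 2$) be the children of $v$, I define $X_{j}$ as the subtree of $X$ rooted at $c_{j}$ together with a fresh terminal root $\rt_{j}$ adjacent to $c_{j}$, setting $k_{j}\coloneqq\bk_{c_{j}}$ as the index of $\{\rt_{j},c_{j}\}$ and $\bk^{(j)}=\bk$ elsewhere. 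Each $(X_{j};\bk^{(j)})$ is harvestable: its root is a terminal, all degrees and colours are inherited from $X$, the inherited index clauses persist, and $p_{c_{j}}=v\in V_{\circ}$ forces $k_{j}>0$. Finally, $G(X_{0},\ldots,X_{r})$ re-identifies $\rt_{0},\ldots,\rt_{r}$ as the single vertex $v\in V_{\circ}$, and an edge-by-edge check shows that both the underlying tree and the index $\bk$ are recovered, so $(V,E)=G(X_{0},\ldots,X_{r})$ as required.

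For uniqueness, I would take any admissible decomposition with cluster vertex $v'$ and argue that $v'$ is forced. Because $X_{0}'$ is linear with $\rt$ the farthest vertex from its root $\rt_{0}'$, the path from $\rt$ to $v'$ in $X$ is the whole of $X_{0}'$ with $\rt_{0}'$ re-identified as $v'$; since grafting changes the colour of no vertex except the cluster vertex, every vertex of this path other than $v'$ remains in $V_{\bullet}$, hence has degree $\le 2$, whereas $v'\in V_{\circ}$ has degree $\ge 3$. Thus $v'$ is exactly the first branched vertex met on leaving $\rt$, an intrinsic invariant of $(X;\bk)$, so $v'=v$. Once $v$ is pinned down, $X_{0}'$ must be the $\rt$-to-$v$ stem with its auxiliary root, the $X_{j}'$ must be the connected components of $X\setminus\{v\}$ not containing $\rt$ (each with its auxiliary root), and all indices are determined by restriction together with the single boundary value $\bk_{v}$. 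Hence every admissible decomposition coincides with the one constructed above.

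The main obstacle I anticipate is not a single hard step but the bookkeeping around the auxiliary roots and the boundary edge $\{\rt_{0},u_{0}\}$: one must track that the colour of the cluster vertex is supplied by the grafting (overriding the $\bullet$-colouring that $\rt_{0}$ carries inside the linear tree $X_{0}$), and that $\bk_{v}$ is exactly the one index left free by the two positivity clauses of harvestability, which is what makes the exceptional edge in the condition on $\bk^{(0)}$ both necessary and correctly placed. The most delicate verification is that harvestability descends to each branch $X_{j}$: the clause ``$p_{w}\in V_{\circ}\Rightarrow\bk_{w}>0$'' survives the replacement of $v$ by the $\bullet$-coloured root $\rt_{j}$ only via the already-established positivity $k_{j}>0$, so the two distinct index clauses of Definition~\ref{def_2crt}\eqref{def_harvestability} must be invoked in tandem.
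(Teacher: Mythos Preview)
Your proof is correct and follows essentially the same approach as the paper, which defers this lemma to the more general Lemma~\ref{p_harvestability_characterization}: both locate the branched vertex nearest to $\rt$ (forced into $V_{\circ}$ by the harvestability degree conditions), take the stem to $\rt$ as $X_{0}$, and peel off the remaining components as $X_{1},\ldots,X_{r}$ with auxiliary roots. Your verification of the harvestability of each $(X_{j};\bk^{(j)})$ and your uniqueness argument are more detailed than what the paper records, and your handling of the linear case (dismissing it as the degenerate base with $r=0$) matches the paper's one-line treatment.
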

Define $\fH$ as the non-commutative polynomial ring $\bbQ\langle e_{0},e_{1}\rangle$ and $\fH^{1}\coloneqq\bbQ\oplus e_{1}\fH$. Endow the $\bbQ$-bilinear \emph{shuffle product} $\sh$ on $\fH$ recursively as
\begin{gather}
  1\sh w=w\sh 1=w,\\
  we_{i}\sh w'e_{j}=(we_{i}\sh w')e_{j}+(w\sh w'e_{j})e_{i}
\end{gather} 
for $w$, $w'\in\fH$ and $i$, $j\in\{0,1\}$. This product gives $\fH^{1}$ a commutative $\bbQ$-algebra structure \cite{reutenauer93}.
\begin{definition}[{\cite[Definition 3.16]{osy21}}]\label{word_with_tree}
  We assign an element $O_{(X;\bk)}\in\fH^{1}$ for each harvestable pair $(X;\bk)=(V,E,\rt,V_{\bullet};\bk)$ by the following inductive rules.
  \begin{enumerate}
    \item When $X$ is a linear tree, we can take the vertex $u_{1}$ farthest from $\rt$. Then let $u_{i+1}$ be the parent of $u_{i}$, $r$ the positive integer determined by $u_{r+1}=\rt$ and
    \[O_{(X;\bk)}\coloneqq\prod_{i=1}^{r}e_{1}e_{0}^{\bk(\{u_{i},u_{i+1}\})-1}.\]
    \item Assume that $(X;\bk)$ is a general harvestable pair which is not linear and use the same symbols as in Lemma \ref{harvestability_characterization}. Then we define
    \[O_{(X;\bk)}\coloneqq (O_{(X_{1};\bk^{(1)})}\sh\cdots\sh O_{(X_{r};\bk^{(r)})})e_{0}^{\bk^{(0)}(\{\rt_{0},u_{0}\})}O_{(V_{0}\setminus\{\rt_{0}\},E_{0}\setminus\{\rt_{0},u_{0}\},\rt,V_{0}\setminus\{\rt_{0}\};\bk^{(0)}|_{E_{0}\setminus\{\{\rt_{0},u_{0}\}\}})}.\]
  \end{enumerate}
\end{definition}
\begin{definition}[{\cite[Definition 1.3]{ono17}}]\label{def_fmzv_2crt}
  Let $X=(V,E,\rt,V_{\bullet})$ be a $2$-colored rooted tree and $\bk$ an index on $X$. Then we put
  \[\zeta_{<M}(X;\bk)\coloneqq\sum_{\substack{(m_{v})_{v}\in\bbZ_{\ge 1}^{V_{\bullet}}\\ \sum_{v\in V_{\bullet}}m_{v}=M}}\prod_{e\in E}\left(\sum_{e\in P(\rt,v)}m_{v}\right)^{-\bk(e)},\]
  for a positive integer $M$, and define the \emph{finite multiple zeta value} (FMZV for short) \emph{associated with $X$} as
  \[\zeta_{\cA}(X;\bk)\coloneqq(\zeta_{<p}(X;\bk)~\mod~p)_{p\in\cP}\in\cA.\]
  Here $\cA$ denotes the \emph{ring of integers modulo infinitely large primes}
  \[\cA\coloneqq\left(\prod_{p\in\cP} \bbZ/p\bbZ\right)\Biggm/\left(\bigoplus_{p\in\cP} \bbZ/p\bbZ\right),\]
  where $\cP$ is the set of all prime numbers. In particular, when $X$ is linear and $(X;\bk)$ is a harvestable pair, there exist positive integers $k_{1},\ldots,k_{r}$ (see the last assertion in Definition \ref{def_2crt}) such that the associated value is written as
  \[\zeta_{\cA}(X;\bk)=\left(\sum_{0<n_{1}<\cdots<n_{r}<p}\frac{1}{n_{1}^{k_{1}}\cdots n_{r}^{k_{r}}}~\mod~p\right)_{p\in\cP}\in\cA.\]
  The right-hand side is usually denoted as $\zeta_{\cA}(k_{1},\ldots,k_{r})$ and called the \emph{finite multiple zeta value} in the usual sense \cite{kz22}.  
\end{definition}
\begin{definition}[{\cite[Definitions 3.2, 3.19]{osy21}}]\label{def_t_smzv_2crt}
  Let $X=(V,E,\rt,V_{\bullet})$ be a $2$-colored rooted tree and $\bk$ an essentially positive index on $X$. Then we put
  \[\zeta_{\hcS,<M}(X;\bk)\coloneqq\sum_{u\in V_{\bullet}}\sum_{\substack{(m_{v})_{v}\in\bbZ^{V_{\bullet}}\\ m_{v}>0\,(v\neq u)\\ -M<m_{u}<0\\\sum_{v\in V_{\bullet}}m_{v}=0}}\prod_{e\in E}\left(\sum_{e\in P(\rt,v)}(m_{v}+\delta_{u,v}t)\right)^{-\bk(e)}\in\bbQ\jump{t},\]
  for a positive integer $M$, and define the \emph{$t$-adic symmetric multiple zeta value} ($t$-adic SMZV for short) \emph{associated with $X$} as
  \[\zeta_{\hcS}(X;\bk)\coloneqq\lim_{M\to\infty}\zeta_{\hcS,<M}(X;\bk).\]
  Here the limit is taken coefficientwise in $\bbR\jump{t}$. In particular, when $X$ is linear and $(X;\bk)$ is a harvestable pair, there exist positive integers $k_{1},\ldots,k_{r}$ (see the last assertion in Definition \ref{def_2crt}) such that the associated value is written as
  \[\zeta_{\hcS}(X;\bk)=\lim_{M\to\infty}\sum_{i=0}^{r}\sum_{\substack{0<n_{1}<\cdots<n_{i}\\ n_{i+1}<\cdots<n_{r}<0\\ n_{i}-n_{i+1}<M}}\frac{1}{n_{1}^{k_{1}}\cdots n_{i}^{k_{i}}(n_{i+1}+t)^{k_{i+1}}\cdots (n_{r}+t)^{k_{r}}}.\]
  Ono--Seki--Yamamoto \cite{osy21} proved that the right-hand side converges to the value denoted by $\zeta_{\hcS}^{\sh}(k_{1},\ldots,k_{r})$, defined in \cite{hmo21}. Note that the well-definedness of $\zeta_{\hcS}(X;\bk)$ follows from this convergence, Propositions \ref{prop_2_word} and \ref{prop_2_algorithm}.
\end{definition}
The finite and $t$-adic symmetric multiple zeta values associated with $2$-colored rooted trees have the following properties. The shuffle relations for the usual finite and $t$-adic symmetric multiple zeta values are corollaries of them. In the rest of this section, let $\cF$ be either $\cA$ or $\hcS$.
\begin{proposition}[{\cite[Proposition 2.2]{ono17}}, {\cite[Proposition 3.4]{osy21}}]
  Let $X=(V,E,\rt,V_{\bullet})$ be a $2$-colored rooted tree and $\bk$ an index on $X$. Assume that there exists an edge $e=\{v,w\}$ such that $v\in V_{\circ}\setminus\{\rt\}$ and $\bk(e)=0$. Extend $\bk$ as $\bk(\{w,w'\})\coloneqq\bk(\{v,w'\})$ for an adjacent vertex $v$ of $w'$ $(\neq w)$ and 
  \[X'\coloneqq\left(V\setminus\{v\},(E\setminus\{\{v,w'\}\mid w'\in N_{v}\})\cup\{\{w,w'\}\mid w'\in N_{v}\setminus\{w\}\},\rt,V_{\bullet}\right).\]
  This situation is represented as
  \begin{align}
    (X;\bk)=\begin{xy}
      {(0,0)*{\circ} \ar @{-}^{0} (20,0)*{\times}},
      {(-4.5,0)*++[o][F]{T_{1}}},
      {(24,0)*++[o][F]{T_{2}}}
    \end{xy}\qquad\qquad
    (X';\bk)=\begin{xy}
      {(0,0)*{\times}},
      {(-4,0)*++[o][F]{T_{1}}},
      {(4,0)*++[o][F]{T_{2}}}
    \end{xy}
  \end{align}
  Here $\times$ stands for either $\bullet$ or $\circ$. Then we have
  \[\zeta_{\cF}(X;\bk)=\zeta_{\cF}(X';\bk).\]
\end{proposition}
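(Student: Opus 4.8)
The plan is to prove the stronger statement that the truncated values already agree: $\zeta_{<M}(X;\bk)=\zeta_{<M}(X';\bk)$ for all $M\ge 1$ when $\cF=\cA$, and $\zeta_{\hcS,<M}(X;\bk)=\zeta_{\hcS,<M}(X';\bk)$ for all $M\ge 1$ when $\cF=\hcS$; the proposition then follows by reducing modulo each prime $p\in\cP$ in the first case and by passing to the coefficientwise limit $M\to\infty$ in the second. First I would observe that $X'$ is again a $2$-colored rooted tree (the vertex $v$ has $\deg(v)\ge 2$ since $v\notin V_\bullet$ is not a terminal, so contracting the edge at $v$ produces no new terminal), and, crucially, that $V_\bullet$ is \emph{literally unchanged} because $v\in V_\circ$. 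Hence the outer summation in the definition of $\zeta_{\cF,<M}$ runs over exactly the same data for $X$ and for $X'$---a tuple $(m_u)_{u\in V_\bullet}$ subject to the stated positivity and sum conditions, plus a distinguished $u_0\in V_\bullet$ in the symmetric case---and it suffices to fix one such datum and show that the associated summand for $X$ equals that for $X'$.

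Contracting $e=\{v,w\}$, i.e.\ identifying $v$ with $w$, yields a bijection $\iota\colon E\setminus\{e\}\xrightarrow{\ \sim\ }E'$ which is the identity on edges not incident to $v$ and sends $\{v,w'\}\mapsto\{w,w'\}$ for $w'\in N_v\setminus\{w\}$; the extension rule defining $\bk$ on $X'$ is exactly what makes $\iota$ preserve the index. The geometric heart of the proof is the claim that $\iota$ restricts, for every pair of vertices $a,b$ of $X'$, to a bijection $P_X(a,b)\setminus\{e\}\xrightarrow{\ \sim\ }P_{X'}(a,b)$. Taking $a=\rt$, this gives $f\in P_X(\rt,u)\Leftrightarrow\iota(f)\in P_{X'}(\rt,u)$ for $f\in E\setminus\{e\}$ and $u\in V_\bullet$, so the base $\sum_{u\in V_\bullet,\,f\in P(\rt,u)}m_u$ attached to $f$ in the $X$-summand (resp.\ the same expression with $m_u$ replaced by $m_u+\delta_{u_0,u}t$ in the symmetric case) equals the base attached to $\iota(f)$ in the $X'$-summand. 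Since $\iota$ and the matching of $\bk$ pair off all factors of the two products except the factor of the $X$-summand indexed by $e$, which is $(\,\cdot\,)^{-\bk(e)}=(\,\cdot\,)^{0}=1$ (the base being a positive integer or of the form $n+t$, hence nonzero), the two summands coincide; summing over all data gives the desired equality of truncated values.

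The only genuine content is the restriction property of $\iota$, which is the tree bookkeeping I expect to be the main obstacle: one checks it by cases according to whether $v$ lies on $P_X(a,b)$ and, if so, whether $e$ is one of the two edges of that path meeting $v$, using acyclicity to rule out coincidences among the images (e.g.\ a vertex cannot be adjacent to both $v$ and $w$), and this covers the two positions of $w$ relative to $v$---parent versus child---uniformly. Finally, for the $\hcS$ case I would apply the same restriction property with arbitrary $a,b\in V_\bullet$ to conclude $\sum_{f\in P_{X'}(a,b)}\bk(f)=\sum_{f\in P_X(a,b)}\bk(f)$, the only dropped edge $e$ having weight $0$; hence essential positivity of $\bk$ is inherited by $X'$ and the value $\zeta_{\hcS}(X';\bk)$ on the right-hand side is legitimate.
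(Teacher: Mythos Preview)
Your proof is correct. The paper does not prove this proposition itself---it is quoted from \cite{ono17} and \cite{osy21} as a known result in \S\ref{sec_def}---but it does prove the generalization to iterated integrals (Proposition~\ref{prop_p_contraction_1}), and that argument is structurally identical to yours: the domain (there the integration region, here the summation over $(m_u)_{u\in V_\bullet}$) is unchanged because the contracted vertex $v$ contributes no independent variable; the factors away from $e$ match under the contraction bijection $E\setminus\{e\}\cong E'$; and the single surplus factor attached to $e$ collapses to $1$ (there because $F_{v,0}(\bt)=t_v/(t_v-0)=1$ when $C(v)=0$, here because $\bk(e)=0$). Your path-bijection claim for $\iota$ and the check that essential positivity passes to $X'$ are precisely the bookkeeping the cited references carry out on the series side.
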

\begin{proposition}[{\cite[Proposition 2.3]{ono17}}, {\cite[Proposition 3.5]{osy21}}]
  Let $X=(V,E,\rt,V_{\bullet})$ be a $2$-colored rooted tree and $\bk$ an index on $X$. Assume that there exists edges $e_{i}=\{v,w_{i}\}$ $(i=1,2)$ such that $v\in V_{\circ}\setminus\{\rt\}$ and $\deg(v)=2$. Moreover, let us write $e\coloneqq\{w_{1},w_{2}\}$,
  \[X'\coloneqq (V\setminus\{v\},(E\setminus\{e_{1},e_{2}\})\cup\{e\},\rt,V_{\bullet})\]
  and extend the domain of $\bk$ to $E\cup\{e\}$ by putting $\bk(e)\coloneqq\bk(e_{1})+\bk(e_{2})$ as follows:
  \[(X;\bk)=\begin{xy}
    {(0,0)*{\circ} \ar @{-}^{\bk(e_{2})} (10,0)*{\times}},
    {(0,0)*{\circ} \ar @{-}_{\bk(e_{1})} (-10,0)*{\times}},
    {(14,0)*++[o][F]{T_{2}}},
    {(-14,0)*++[o][F]{T_{1}}}
  \end{xy}\qquad\qquad
  (X';\bk)=\begin{xy}
    {(-10,0)*{\times} \ar @{-}^{\bk(e_{1})+\bk(e_{2})} (10,0)*{\times}},
    {(14,0)*++[o][F]{T_{2}}},
    {(-14,0)*++[o][F]{T_{1}}}
  \end{xy}\]
  Then we have
  \[\zeta_{\cF}(X;\bk)=\zeta_{\cF}(X';\bk).\]
\end{proposition}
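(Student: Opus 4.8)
The plan is to prove the identity already at the level of the truncated sums $\zeta_{<M}$ and $\zeta_{\hcS,<M}$, comparing them term by term, and then pass to the relevant limit. First I would fix notation: since $v\neq\rt$, one of $w_{1},w_{2}$ is the parent $p_{v}$, say $w_{1}=p_{v}$, and $w_{2}$ is the child of $v$. Next I would record the elementary remark that, for an edge $e'$ of $X$ and a vertex $u$, one has $e'\in P(\rt,u)$ if and only if $u$ lies in the connected component of $X\setminus e'$ not containing $\rt$. Applying this to $e_{1}=\{p_{v},v\}$ and $e_{2}=\{v,w_{2}\}$ shows that $e_{1}\in P(\rt,u)$ exactly for $u\in\{v\}\cup T$ and $e_{2}\in P(\rt,u)$ exactly for $u\in T$, where $T$ denotes the set of vertices strictly below $v$. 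Since $v\in V_{\circ}$, i.e.\ $v\notin V_{\bullet}$, these two subsets of $V$ meet $V_{\bullet}$ in the \emph{same} set — this is the one place where the hypothesis $v\in V_{\circ}$ intervenes. Consequently, for every tuple $(m_{u})_{u\in V_{\bullet}}$ (and, in the symmetric case, with the same $t$-shift on the relevant component) the linear forms attached to $e_{1}$ and to $e_{2}$ in the definitions of $\zeta_{<M}$ and $\zeta_{\hcS,<M}$ coincide; I will call this common linear form $L$. Moreover, in $X'$ the new edge $e=\{p_{v},w_{2}\}$ satisfies $e\in P(\rt,u)$ (computed in $X'$) exactly for $u\in T$ as well, so its attached form is again $L$, while every $e'\in E\setminus\{e_{1},e_{2}\}$ survives unchanged in $X'$ and sees the same $V_{\bullet}$-vertices there.

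Given this, I would fix an admissible tuple $(m_{u})_{u\in V_{\bullet}}$ and observe that the contribution of $e_{1}$ and $e_{2}$ to the product over $E$ is $L^{-\bk(e_{1})}L^{-\bk(e_{2})}=L^{-(\bk(e_{1})+\bk(e_{2}))}=L^{-\bk(e)}$, which is precisely the contribution of $e$ to the product over $E(X')=(E\setminus\{e_{1},e_{2}\})\cup\{e\}$, while all the remaining factors are literally identical. Since $V_{\bullet}$ is the same for $X$ and $X'$, the ranges of summation also agree — the constraint $\sum_{u}m_{u}=M$ in the $\cA$-case, and the outer sum over $u\in V_{\bullet}$ together with the constraints $m_{u'}>0$ $(u'\neq u)$, $-M<m_{u}<0$, $\sum_{u'}m_{u'}=0$ in the $\hcS$-case. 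Therefore $\zeta_{<M}(X;\bk)=\zeta_{<M}(X';\bk)$ and $\zeta_{\hcS,<M}(X;\bk)=\zeta_{\hcS,<M}(X';\bk)$ for every positive integer $M$; reducing mod $p$ for each prime $p$ yields $\zeta_{\cA}(X;\bk)=\zeta_{\cA}(X';\bk)$, and taking the coefficientwise limit $M\to\infty$ yields $\zeta_{\hcS}(X;\bk)=\zeta_{\hcS}(X';\bk)$.

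Finally I would check that the $\hcS$-assertion is meaningful, i.e.\ that essential positivity of $\bk$ on $X$ passes to $\bk$ on $X'$: a path between two distinct vertices of $V_{\bullet}$ in $X$ either avoids $v$, in which case it is the same path with the same $\bk$-values in $X'$, or runs through $v$ and hence traverses $e_{1}$ and $e_{2}$ consecutively, in which case the corresponding path in $X'$ replaces $e_{1},e_{2}$ by $e$ with $\bk(e)=\bk(e_{1})+\bk(e_{2})$; in both cases the sum of $\bk$ along the path is unchanged, so it stays positive. I do not anticipate a genuine obstacle here — the whole argument is a direct comparison of the defining sums, and the one point requiring care is the set-equality for $e_{1}$ and $e_{2}$ noted in the first paragraph, which genuinely fails if $v\in V_{\bullet}$.
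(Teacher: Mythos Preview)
Your argument is correct: the term-by-term comparison hinges exactly on the observation that $e_{1}$ and $e_{2}$ lie on $P(\rt,u)$ for the same set of vertices $u\in V_{\bullet}$ (because the only vertex distinguishing the two sets is $v\in V_{\circ}$), so the associated linear form $L$ is common to both edges and $L^{-\bk(e_{1})}L^{-\bk(e_{2})}=L^{-\bk(e)}$. The remaining checks (unchanged factors for the other edges, identical summation ranges, preservation of essential positivity) are routine and you handle them cleanly.

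Note, however, that the present paper does not give its own proof of this proposition: it is stated in \S\ref{sec_def} as a known result imported from \cite[Proposition~2.3]{ono17} and \cite[Proposition~3.5]{osy21}, and the proofs in those references proceed exactly along the lines you describe. What the paper \emph{does} prove is the analogue for iterated integrals (Proposition~\ref{prop_p_contraction_2}), where the mechanism is different: there one integrates out the variable $t_{v}$ and uses the identity
\[
\int_{t_{w_{2}}}^{t_{w_{1}}}(\log t_{v}-\log t_{w_{2}})^{\bk(e_{2})-1}(\log t_{w_{1}}-\log t_{v})^{\bk(e_{1})-1}\,\frac{dt_{v}}{t_{v}}
=\mathrm{B}(\bk(e_{1}),\bk(e_{2}))\,(\log t_{w_{1}}-\log t_{w_{2}})^{\bk(e_{1})+\bk(e_{2})-1}
\]
to merge the two $1$-forms into one. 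Your sum-side proof and the paper's integral-side proof are thus parallel incarnations of the same contraction, each adapted to its ambient definition.
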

\begin{proposition}[{\cite[Proposition 2.4]{ono17}}, {\cite[Proposition 3.6]{osy21}}]
  Let $X=(V,E,\rt,V_{\bullet})$ be a $2$-colored rooted tree and $\bk$ an index on $X$. Choose an arbitrary vertex $\rt'\in V$ and write $X'\coloneqq (V,E,\rt',V_{\bullet})$. Then we have
  \[\zeta_{\cA}(X;\bk)=(-1)^{\sum_{e\in P(\rt,\rt')}\bk(e)}\zeta_{\cA}(X';\bk)\]
  and
  \[\zeta_{\hcS}(X;\bk)=(-1)^{\sum_{e\in P(\rt,\rt')}\bk(e)}\sum_{\bf\colon P(\rt,\rt')\to\bbZ_{\ge 0}}\left(\sum_{e\in P(\rt,\rt')}\binom{\bk(e)+\bf(e)-1}{\bf(e)}\right)\zeta_{\hcS}(X';\bk\oplus\bf)t^{\sum_{e\in P(\rt,\rt')}\bf(e)}.\]
  Here $\bk\oplus\bf$ denotes the index on $X$ determined by $e\mapsto\bk(e)+\bf(e)$.
\end{proposition}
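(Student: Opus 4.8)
The plan is to reduce both equalities to a single combinatorial fact about how the ``edge linear forms'' in the defining sums transform when the root moves from $\rt$ to $\rt'$. For a root $\rt$ and an edge $e$, write the factor occurring in the summand attached to $e$ as $L_{e}^{\rt}\coloneqq\sum_{v\in V_{\bullet},\,e\in P(\rt,v)}m_{v}$ in the $\cA$-case and $L_{e}^{\rt}\coloneqq\sum_{v\in V_{\bullet},\,e\in P(\rt,v)}(m_{v}+\delta_{u,v}t)$ in the $\hcS$-case, so that in both situations the summand is $\prod_{e\in E}(L_{e}^{\rt})^{-\bk(e)}$. The key observation is that deleting $e$ splits $(V,E)$ into the component $A$ containing $\rt$ and its complement $B$, and the set of $v\in V_{\bullet}$ with $e\in P(\rt,v)$ is exactly $V_{\bullet}\cap B$. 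First I would note that the index set of the outer summation (the tuples $(m_{v})_{v}$, and in the $\hcS$-case the vertex $u$) depends only on $V_{\bullet}$ and the linear constraints, not on the root; hence moving the root alters only the forms $L_{e}^{\rt}$. For $e\notin P(\rt,\rt')$ the complement $B$ is unchanged, so $L_{e}^{\rt'}=L_{e}^{\rt}$, while for $e\in P(\rt,\rt')$ the components $A$ and $B$ swap, so $L_{e}^{\rt}$ and $L_{e}^{\rt'}$ are built from complementary vertex sets.

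For $\cF=\cA$ the constraint is $\sum_{v\in V_{\bullet}}m_{v}=M$, which I take to be $M=p$. Then for $e\in P(\rt,\rt')$ the complementarity gives $L_{e}^{\rt}+L_{e}^{\rt'}=p$, hence $L_{e}^{\rt'}\equiv-L_{e}^{\rt}\pmod p$. Therefore each summand of $\zeta_{<p}(X';\bk)$ equals $(-1)^{\sum_{e\in P(\rt,\rt')}\bk(e)}$ times the corresponding summand of $\zeta_{<p}(X;\bk)$ modulo $p$; summing and reducing gives the first identity, and since the sign is its own inverse the stated form follows. The finitely many primes $p$ for which some $L_{e}^{\rt}\equiv0$ occurs do not affect the class in $\cA$.

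For $\cF=\hcS$ the constraint is $\sum_{v\in V_{\bullet}}m_{v}=0$ and the $t$ enters only through the distinguished vertex $u$. The heart of the argument is the uniform pointwise identity $L_{e}^{\rt}=t-L_{e}^{\rt'}$ for every $e\in P(\rt,\rt')$. I would verify this from the two cases $u\in B$ and $u\in A$: using $\sum_{v}m_{v}=0$ the constant terms satisfy $L_{e}^{\rt}-L_{e}^{\rt}=0$ (i.e.\ the constant parts are negatives of one another), while the two $t$-shifts $\delta_{u,v}t$ contribute $t$ in total because exactly one of $e\in P(\rt,u)$, $e\in P(\rt',u)$ holds, namely the one on the side not containing $u$. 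Granting this, I would expand each path factor by the binomial series $(t-L)^{-k}=(-1)^{k}\sum_{f\ge0}\binom{k+f-1}{f}L^{-(k+f)}t^{f}$ with $L=L_{e}^{\rt'}$ and $k=\bk(e)$; multiplying over $e\in P(\rt,\rt')$ produces the sign $(-1)^{\sum_{e\in P(\rt,\rt')}\bk(e)}$, the product $\prod_{e\in P(\rt,\rt')}\binom{\bk(e)+\bf(e)-1}{\bf(e)}$ (so the binomial factor comes out as a product over $e\in P(\rt,\rt')$), the shifted index $\bk\oplus\bf$, and the monomial $t^{\sum_{e}\bf(e)}$. Resumming over $(m_{v})_{v}$ and $u$ turns the shifted product into $\zeta_{\hcS,<M}(X';\bk\oplus\bf)$, yielding the finite-level identity, after which I let $M\to\infty$ coefficientwise.

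The hard part is that the termwise binomial expansion is only legitimate when $L_{e}^{\rt'}$ has nonzero constant term, whereas individual summands may have $L_{e}^{\rt'}=t$---for instance when the masses in a component cancel and $u$ lies there---so single terms genuinely carry poles in $t$. These polar contributions cancel inside the full sum, which is precisely the assertion $\zeta_{\hcS,<M}(X;\bk)\in\bbQ\jump{t}$ recorded in Definition \ref{def_t_smzv_2crt}. The clean way around this is to first record $L_{e}^{\rt}=t-L_{e}^{\rt'}$ and the resulting equality of finite sums as an identity of rational functions in $t$, valid termwise wherever defined and globally after summation, and only then expand the two totals---each holomorphic at $t=0$---at the origin; alternatively one regularizes the inner sums and passes to the limit. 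The remaining interchange of $\lim_{M\to\infty}$ with the sum over $\bf$ is harmless, since every fixed power of $t$ receives contributions from only finitely many $\bf$.
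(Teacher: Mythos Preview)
The paper does not give its own proof of this proposition; it is quoted from \cite{ono17} and \cite{osy21} as background in \S\ref{sec_def}. Your direct manipulation of the defining sums---identifying the edge forms $L_e^{\rt}$, noting that only those with $e\in P(\rt,\rt')$ change, and using $L_e^{\rt}+L_e^{\rt'}=p$ (resp.\ $=t$) for such $e$---is precisely the argument in those references and is correct. You are also right that the binomial factor in the $\hcS$-identity should be a \emph{product} over $e\in P(\rt,\rt')$; the displayed sum in the statement is a typo.

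Your one misstep is the passage you flag as ``the hard part.'' In fact no individual summand ever carries a pole in $t$, so the detour through rational functions is unnecessary. For any edge $e$, deleting $e$ splits $V_{\bullet}$ into two nonempty pieces (each component of the tree contains a terminal, and all terminals lie in $V_{\bullet}$); exactly one piece contains $u$, while the other consists entirely of vertices with $m_v>0$ and hence has strictly positive mass sum. Since the two constant parts add to $0$, the constant term of every $L_e^{\rt}$ and $L_e^{\rt'}$ is a nonzero integer, and your binomial expansion is legitimate term by term. The same observation shows $1\le L_e^{\rt}\le p-1$ in the $\cA$-case, so no primes need to be excluded there either.
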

\begin{proposition}[{\cite[Proposition 3.2]{ono17}}, {\cite[Proposition 3.17]{osy21}}]\label{prop_2_word}
  For a harvestable pair $(X;\bk)$, we have $\zeta_{\cF}(X;\bk)=Z_{\cF}(O_{(X;\bk)})$. Here $Z_{\cF}$ is the unique $\bbQ$-linear map sending $e_{1}e_{0}^{k_{1}-1}\cdots e_{1}e_{0}^{k_{r}-1}$ to $\zeta_{\cF}(k_{1},\ldots,k_{r})$ for $k_{1},\ldots,k_{r}\in\bbZ_{\ge 1}$. 
\end{proposition}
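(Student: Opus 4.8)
The plan is to argue by induction on the number of vertices of $X$, the base case being that $X$ is linear. In that case $(X;\bk)$ is a path with $\rt=v_{r+1}$ and $\bk=(k_{1},\ldots,k_{r})$, all $k_{i}>0$, by the last assertion of Definition~\ref{def_2crt}, so that $\zeta_{\cF}(X;\bk)=\zeta_{\cF}(k_{1},\ldots,k_{r})$ by Definitions~\ref{def_fmzv_2crt} and \ref{def_t_smzv_2crt}, while the first rule of Definition~\ref{word_with_tree} gives $O_{(X;\bk)}=e_{1}e_{0}^{k_{1}-1}\cdots e_{1}e_{0}^{k_{r}-1}$; hence $Z_{\cF}(O_{(X;\bk)})=\zeta_{\cF}(k_{1},\ldots,k_{r})$ by the defining property of $Z_{\cF}$. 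The only thing to check here is that the vertex--index convention of Definition~\ref{def_2crt}, the word of Definition~\ref{word_with_tree} and the sums of Definitions~\ref{def_fmzv_2crt} and \ref{def_t_smzv_2crt} are mutually consistent, which is the routine computation via $n_{i}=m_{v_{1}}+\cdots+m_{v_{i}}$.

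For the inductive step, assume $X$ is not linear and apply Lemma~\ref{harvestability_characterization} to write $(X;\bk)=G(X_{0},X_{1},\ldots,X_{r})$; here $r\ge 2$, since the cluster vertex $c$ lies in $V_{\circ}$ and is therefore forced to have degree $r+1\ge 3$, the stem $X_{0}$ is linear with root $\rt_{0}$, nearest vertex $u_{0}$ and farthest vertex $\rt$, with $a\coloneqq\bk^{(0)}(\{\rt_{0},u_{0}\})\ge 0$ and every other stem-edge positive, and $(X_{1};\bk^{(1)}),\ldots,(X_{r};\bk^{(r)})$ are harvestable. Each $X_{j}$ has strictly fewer vertices than $X$, so the inductive hypothesis yields $\zeta_{\cF}(X_{j};\bk^{(j)})=Z_{\cF}(O_{(X_{j};\bk^{(j)})})$. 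In view of the second rule of Definition~\ref{word_with_tree}, it then remains to prove the \emph{harvest identity}
\[\zeta_{\cF}(X;\bk)=Z_{\cF}\!\left(\bigl(O_{(X_{1};\bk^{(1)})}\sh\cdots\sh O_{(X_{r};\bk^{(r)})}\bigr)\,e_{0}^{a}\,W_{0}\right),\]
where $W_{0}$ is the stem word appearing in that rule.

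To prove the harvest identity I would work with the truncated sums $\zeta_{\cF,<M}$, in which the summand factors: the vertices $w$ of the subtree $T_{j}$ hanging from $c$ contribute $\prod_{w}S_{w}^{-\bk_{w}}$, with $S_{w}$ the sum of the summation variables over the part of $V_{\bullet}$ below $w$; these factors depend only on the variables on $T_{j}$, the one on the edge from $c$ into $T_{j}$ being $N_{j}^{-k_{j}}$ with $N_{j}$ the total of $T_{j}$; the cluster vertex contributes $(N_{1}+\cdots+N_{r})^{-a}$; and the stem contributes a nested product in $N_{1},\ldots,N_{r}$ and the stem variables. It is convenient to prove, by the same induction, the sharper statement that already records the total $N_{j}$ of each $T_{j}$ --- equivalently, that the contribution of a subtree to $\zeta_{\cF,<M}$ is, level by level in its total, the same $\bbQ$-linear combination of linear-tree contributions that the inductive hypothesis produces --- so that one may then assume each $T_{j}$ linear. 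The remaining identity is the iteration of the partial-fraction relation $\tfrac{1}{xy}=\tfrac{1}{x+y}\bigl(\tfrac{1}{x}+\tfrac{1}{y}\bigr)$, whose bookkeeping is exactly the recursion defining $\sh$: merging the $r$ families of partial sums produces precisely the $r$-fold shuffle $O_{(X_{1};\bk^{(1)})}\sh\cdots\sh O_{(X_{r};\bk^{(r)})}$, and appending $e_{0}^{a}$ and then $W_{0}$ records the edge $\{c,u_{0}\}$ and the stem. For $\cF=\cA$ one reads everything modulo $p$ with $M=p$; for $\cF=\hcS$ one lets $M\to\infty$, checking that the boundary terms created by the truncation conditions vanish in the limit just as in \cite{osy21} --- which simultaneously gives the well-definedness of $\zeta_{\hcS}(X;\bk)$ --- and using that the shuffle regularization is compatible with the limit.

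The hard part is the harvest identity, and within it the sharper graded bookkeeping: one must be sure that the partial-fraction rewriting inside each $T_{j}$ is an identity of rational numbers valid at all positive-integer values of the summation variables --- in particular of the total $N_{j}$ --- rather than merely one valid modulo $p$ or in the limit, so that these contributions can legitimately be substituted into the outer sum. In the symmetric case there is the additional subtlety of controlling the truncation boundary terms and their interaction with the shuffle regularization. One small reassurance is that if one instead runs a secondary induction on $r$, the base case $r=1$ has a degree-$2$ cluster vertex and is settled at once by the contraction property for vertices of $V_{\circ}$ of degree $2$; this suggests peeling off one subtree of $c$ at a time as an alternative to the reduction to linear subtrees.
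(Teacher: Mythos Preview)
The paper does not itself prove this proposition --- it is stated in the preliminaries and attributed to \cite{ono17} and \cite{osy21}. The closest thing the paper contains is the proof of the generalisation, Theorem~\ref{thm_shuffle_theorem}, which follows the method of \cite[Lemma~3.7]{osy21}: one inducts not on $|V|$ but on $\ell(X;\bk)=\sum_{e\notin E_0}\bk(e)$, the total index off the stem, and at each step uses the elementary identity $\sum_j N_j = N$ in the form
\[
\frac{1}{N^{a}\prod_{j} N_j^{k_j}}
=\sum_{j=1}^{r}\frac{1}{N^{a+1}\,N_j^{k_j-1}\prod_{i\ne j}N_i^{k_i}}
\]
to transfer one unit of index from a branch edge onto the stem edge. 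On the word side this single step is exactly one application of the shuffle recursion (this is the case split $k'_j\ge 2$ versus $k'_j=1$ in the proof of Theorem~\ref{thm_shuffle_theorem}), and iterating it until all branch indices are exhausted reproduces the full partial-fraction expansion you invoke.

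So your outline is correct and is essentially the closed form of the same computation. The chief practical difference is that the shift-by-one organisation keeps every intermediate object a harvestable pair, so no ``level-by-level'' strengthening of the hypothesis is needed and the $\hcS$ case requires nothing beyond what is already built into the definition. Your formulation, by contrast, makes the link to the classical partial-fraction proof of the shuffle relation for truncated sums more transparent, at the price of carrying that stronger graded hypothesis through the induction. The refinement you need --- that the identity already holds for $\zeta_{<M}$ at every $M$, not merely after reduction modulo $p$ or passage to the limit --- is exactly what \cite{ono17} proves (and what the present paper appeals to in the proof of Proposition~\ref{prop_p_fmpl}), so the concern you flag is genuine but resolvable along the lines you suggest.
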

\begin{proposition}[{\cite[Proposition 2.9]{ono17}}, {\cite[Proposition 3.18]{osy21}}]\label{prop_2_algorithm}
  Let $X=(V,E,\rt,V_{\bullet})$ be a $2$-colored rooted tree and $\bk$ an essentially positive index on $X$. Assume $\rt\in V_{\bullet}$. Then there exists a harvestable pair $(X_{\h};\bk_{\h})$ such that
  \[\zeta_{\cF}(X;\bk)=\zeta_{\cF}(X_{\h};\bk_{\h}).\]
\end{proposition}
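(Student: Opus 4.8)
Write-up plan.

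The plan is to exhibit $(X_\h;\bk_\h)$ by an explicit algorithm that transforms $(X;\bk)$ step by step. Each step is an instance of one of the first two of the preceding propositions — the contraction of a weight-zero edge at a white interior vertex $\neq\rt$ (call it the \emph{zero-edge contraction}), or the contraction of a white interior vertex of degree $2$ (the \emph{degree-two contraction}) — and hence preserves $\zeta_\cF$ (for $\cF=\hcS$, at the level of every truncation $\zeta_{\hcS,<M}$, which is what the proofs of those propositions actually establish). I would deliberately avoid the re-rooting identity: although it could turn a non-terminal root into a terminal one, it yields a sign when $\cF=\cA$ and an infinite power series in $t$ when $\cF=\hcS$, not a single harvestable pair. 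The key device is that the zero-edge contraction can be read backwards: for any vertex $w$ and any subset $S\subseteq N_w$ of the current tree, one may attach a new white vertex joined to $w$ by a weight-$0$ edge and re-attach each vertex of $S$ to it with unchanged weights; then the zero-edge contraction applied to the enlarged tree, with the new vertex as the one removed, returns the original tree, so this \emph{splitting} move preserves $\zeta_\cF$ too. Finally, each of these moves preserves essential positivity, since none of them alters the sum of the weights along a path between two vertices of $V_\bullet$; in particular the harvestability requirement that an edge between two vertices of $V_\bullet$ has positive weight will hold automatically in the output.

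The algorithm runs four stages, in order. \emph{Stage 1:} while the index has a weight-zero edge $e$, note that $e$ must be incident to a white vertex $\neq\rt$ (by essential positivity $e$ cannot join two vertices of $V_\bullet$, and $\rt\in V_\bullet$), and apply the zero-edge contraction there; since $|V|$ strictly decreases, the stage ends with no weight-zero edge. \emph{Stage 2:} while some white vertex has degree $2$ (necessarily $\neq\rt$), apply the degree-two contraction; the new edge carries the sum of two positive weights, so no weight-zero edge reappears and no vertex changes degree, and the stage ends with every white vertex of degree $\geq 3$ (degree $\leq 1$ being impossible for a white vertex) and still no weight-zero edge. \emph{Stage 3:} while some $v\in V_\bullet\setminus\{\rt\}$ has $\deg(v)\geq 3$, apply the splitting move at $v$, attaching a new white vertex $v'$ joined to $v$ by a weight-$0$ edge and re-attaching all children of $v$ to $v'$; then $\deg(v)=2$ and $\deg(v')\geq 3$, and the number of vertices of $V_\bullet$ of degree $\geq 3$ strictly decreases, so the stage terminates. \emph{Stage 4:} if $\rt$ is not a terminal, apply the splitting move once at $\rt$, attaching a new white vertex $v_0$ joined to $\rt$ by a weight-$0$ edge and re-attaching all neighbors of $\rt$ to $v_0$; then $\deg(\rt)=1$ and $\deg(v_0)\geq 3$.

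I would then verify that $(X_\h;\bk_\h)$ is harvestable. The root lies in $V_\bullet$ and is a terminal, because Stage 4 is performed last and leaves $\deg(\rt)=1$. Every white vertex has degree $\geq 3$: this holds after Stage 2, and Stages 3 and 4 only add white vertices of degree $\geq 3$ and never lower a degree. Every vertex of $V_\bullet\setminus\{\rt\}$ has degree $\leq 2$: this is the effect of Stage 3, and Stage 4 only lowers $\deg(\rt)$. For the condition that $p_v\in V_\circ$ implies $\bk_v>0$: after Stage 1 there is no weight-zero edge at all, and every weight-zero edge created afterwards (in Stages 3 and 4) joins a parent in $V_\bullet$ (an old $v$ or $\rt$) to a child in $V_\circ$ (a new $v'$ or $v_0$); hence an edge $\{v,p_v\}$ with $p_v\in V_\circ$ is either left untouched by Stages 3 and 4 and keeps its positive Stage-1 weight, or is one re-attached from a child of the old $v$ or $\rt$ onto a newly inserted white vertex and so keeps its positive pre-splitting weight. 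The remaining harvestability condition follows from essential positivity, as above. Hence $(X_\h;\bk_\h)$ is harvestable and $\zeta_\cF(X;\bk)=\zeta_\cF(X_\h;\bk_\h)$, by concatenating the equalities supplied by the individual moves.

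The main obstacle is not any single move but the bookkeeping that makes the four stages non-interfering and guarantees termination. The delicate points are: that the weight-zero edges deliberately created in Stages 3 and 4 are not removed again by the zero-edge contraction — which is why Stage 1 is run first and to completion, and why the later stages only ever create ``allowed'' weight-zero edges (parent in $V_\bullet$, child in $V_\circ$), to which the zero-edge contraction does not apply; that termination survives the increase of $|V|$ in Stages 3 and 4 — handled by the monovariant ``number of vertices of $V_\bullet$ of degree $\geq 3$'' for Stage 3 and by Stage 4 being performed at most once; and that every edge re-attached onto a newly inserted white vertex carries a positive weight — which one checks from essential positivity when its far endpoint lies in $V_\bullet$ and otherwise from the fact that, after Stage 1, the only weight-zero edges present are the ones introduced by the splitting moves, which are never among the re-attached edges.
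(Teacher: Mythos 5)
Your proposal is correct and takes essentially the same route as the paper: for this statement the paper defers to Ono and Ono--Seki--Yamamoto, and its own proof of the generalized version (Proposition \ref{prop_p_representation_algorithm}) is exactly your four-stage algorithm --- contract weight-zero edges, contract degree-two vertices of $V_{\circ}$, then apply the zero-edge contraction in reverse (your ``splitting move'') at branched vertices of $V_{\bullet}$ and finally at the root. Your extra bookkeeping (the monovariant for termination, preservation of essential positivity, and the check that newly created zero-weight edges are never re-attached) just makes explicit what those proofs leave implicit.
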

\section{Iterated integrals}\label{sec_ii}
A \emph{tangential base point} in $\bbC$ is a pair $\tp\coloneqq (p,v)$ consisting of $p\in\bbC$ and its non-zero tangent vector $v$. The first entry is usually identified with the non-zero complex number. For a subset $M\subseteq\bbC$ and tangential base points $\tp=(p,v)$ and $\tq=(q,w)$, a \emph{path} from $\tp$ to $\tq$ on $M$ is a piecewise smooth map $\gamma\colon[0,1]\to\bbC$ such that
\[\gamma(0)=p,\qquad\gamma(1)=q,\qquad\lim_{t\searrow 0}\frac{\gamma(t)-p}{t}=v,\qquad\lim_{t\nearrow 1}\frac{\gamma(t)-q}{t-1}=-w\]
and $\gamma((0,1))\subseteq M$. Homotopy classes of such paths admit the composition and the inverse \cite[\S3.7]{gf22}. Note that, even if we omit the informations of endpoints (and their tangential vectors) as ``a path $\gamma$ on $M$,'' $\gamma(0)$ or $\gamma(1)$ can be in $M$.
\begin{definition}\label{def_general_shuffle}
  For a set $D$, we write $\fH_{D}\coloneqq\bbQ\langle e_{z}\mid z\in D\rangle$. Define the \emph{shuffle product} $\sh$ on $\fH_{D}$ inductively as
  \begin{gather}
    1\sh w=w\sh 1=w,\\
    we_{z}\sh w'e_{z'}=(we_{z}\sh w')e_{z'}+(w\sh w'e_{z'})e_{z}
  \end{gather} 
  for $w,w'\in\fH_{D}$ and $z,z'\in D$. The restriction of this product provides a $\bbQ$-algebra structure on $\fH_{D'}$ for any $\emp\neq D'\subseteq D$.
\end{definition}
Here and in what follows, $x_{1},\ldots,x_{r}$ stand for indeterminates unless noted otherwise. Moreover, we denote by $\bx$ a tuple of indeterminates $(x_{1},\ldots,x_{r})$ and by $\bx^{-1}$ the tuple $(x_{1}^{-1},\ldots,x_{r}^{-1})$ of their inverse.
\begin{definition}
  Let $a_{1},\ldots,a_{k}$ be elements of $\bbC[\bx,\bx^{-1}]$, $\tp$ and $\tq$ be tangential base points of $\bbC$ and $\gamma$ a path from $\tp$ to $\tq$ on $\bbC\setminus D$. Then, similarly to \cite[Lemma 3.238]{gf22}, for $0<z<1$ and integers $m_{1},\ldots,m_{r}$ one has the unique asymptotic expansion of the form
  \begin{align}\label{eq_asymptotic_expansion}
    \begin{split}
    &\left(\text{the coeffcient of }\int_{z<t_{1}<\cdots<t_{k}<1-z}\prod_{i=1}^{r}\frac{d\gamma(t_{i})}{\gamma(t_{i})-a_{i}}\text{ at }x_{1}^{m_{1}}\cdots x_{r}^{m_{r}}\right)\\
    &=\sum_{i=0}^{n}\sum_{j=0}^{\infty}c_{i,j}(m_{1},\ldots,m_{r})(\log z)^{i}z^{j}\qquad (z\to +0).
    \end{split}
  \end{align}
  Thus we can define the \emph{regularized iterated integral} as the $\bbQ$-linear map $I_{\gamma}(\tp;-;\tq)\colon\fH_{\bbC[\bx,\bx^{-1}]}\to\bbC\jump{\bx,\bx^{-1}}$ determined by $1\mapsto 1$ and
  \[I_{\gamma}(\tp;e_{a_{1}}\cdots e_{a_{k}};\tq)=\sum_{m_{1},\ldots,m_{r}\in\bbZ}c_{0,0}(m_{1},\ldots,m_{r})x_{1}^{m_{1}}\cdots x_{r}^{m_{r}}\in\bbC\jump{\bx,\bx^{-1}}.\]
\end{definition}
Note that when the limit $z\to +0$ of the right-hand side of \eqref{eq_asymptotic_expansion} converges (i.e., $n=0$), the value of $I_{\gamma}$ coincides with the sum of their limits over $m_{1},\ldots,m_{r}\ge 0$ and is independent of the choice of tangential vectors.
\begin{proposition}[{\cite[Theorem 3.242]{gf22}}]\label{prop_ii_properties}
  Let $w=e_{a_{1}}\cdots e_{a_{k}}$ $(a_{1},\ldots,a_{k}\in\bbC[\bx,\bx^{-1}])$ and $w'$ be elements of $\fH_{\bbC[\bx,\bx^{-1}]}$, $\tp$, $\tq$ and $\tr$ tangential base points and $\gamma$ $($resp.~$\tilde{\gamma})$ a path from $\tp$ to $\tq$ $($resp.~from $\tq$ to $\tr)$. Then we have
  \begin{enumerate}[\rm (1)]
    \item\label{shuffle} $I_{\gamma}(\tp;w\sh w';\tq)=I_{\gamma}(\tp;w;\tq)I_{\gamma}(\tp;w';\tq)$.
    \item\label{reversal} $I_{\gamma}(\tp;w;\tq)=(-1)^{k}I_{\gamma^{-1}}(\tq;\overleftarrow{w};\tp)$. Here $\overleftarrow{w}$ stands for $e_{a_{k}}\cdots e_{a_{1}}$.
    \item\label{pathcon} $\displaystyle I_{\tilde{\gamma}\circ\gamma}(\tp;w;\tr)=\sum_{i=0}^{k}I_{\gamma}(\tp;e_{a_{1}}\cdots e_{a_{i}};\tq)I_{\tilde{\gamma}}(\tq;e_{a_{i+1}}\cdots e_{a_{k}};\tr)$.
  \end{enumerate}
\end{proposition}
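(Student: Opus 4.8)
All three assertions are the classical shuffle, path-reversal and path-composition properties of iterated integrals, transplanted to the regularized setting of \eqref{eq_asymptotic_expansion}. The plan is to establish each first at the level of the \emph{truncated} integrals $\int_{z<t_{1}<\cdots<t_{k}<1-z}\prod_{i}\frac{d\gamma(t_{i})}{\gamma(t_{i})-a_{i}}$ --- which for each fixed $0<z<1/2$ are genuinely convergent (no singularity of the integrand is met on the compact interval $[z,1-z]$) and expand coefficientwise as Laurent series in $\bx$ --- and then to pass to the constant term $c_{0,0}$ of the asymptotic expansion as $z\to+0$.

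For the shuffle relation, the product of the $z$-truncated simplices of dimensions $k$ and $l$ equals, up to a measure-zero set, the disjoint union over shuffles of the $z$-truncated simplices of dimension $k+l$; Chen's shuffle identity then yields the product formula for the truncated integrals, coefficientwise in $\bx$. For the reversal relation, the substitution $t_{i}\mapsto 1-t_{k+1-i}$ carries the symmetric truncated simplex onto itself, reverses the order of the $k$ one-forms and replaces $\gamma$ by $\gamma^{-1}$, while pulling the forms back along the reversed path negates each of the $k$ integrands and so produces the overall factor $(-1)^{k}$; hence the identity holds for every truncated integral. For the path-composition relation, realize $\tilde{\gamma}\circ\gamma$ as $\gamma$ on $[0,1/2]$ followed by $\tilde{\gamma}$ on $[1/2,1]$ and split the $z$-truncated simplex according to the number $i\in\{0,\ldots,k\}$ of coordinates lying in $[z,1/2]$: on the $i$-th piece the integral factors as an integral along $\gamma$ over $z<t_{1}<\cdots<t_{i}<1/2$ times an integral along $\tilde{\gamma}$ over $1/2<t_{i+1}<\cdots<t_{k}<1-z$.

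It then remains to take $z\to+0$ and extract the $(\log z)^{0}z^{0}$-coefficient. Since $(\log z)^{a}z^{b}\cdot(\log z)^{a'}z^{b'}=(\log z)^{a+a'}z^{b+b'}$, the constant term of a product of two expansions of the shape \eqref{eq_asymptotic_expansion} is the product of their constant terms; this carries the shuffle relation and the factorizations in the path-composition relation through the limit, and the symmetry of the truncation makes the two expansions in the reversal relation agree term by term, giving $c_{0,0}=(-1)^{k}c_{0,0}'$ there. The one genuinely delicate point --- and the step I expect to be the main obstacle --- is the path-composition relation: the integrals appearing in its decomposition are truncated at only one end, so one must identify the constant terms of their asymptotic expansions with the fully regularized values $I_{\gamma}(\tp;e_{a_{1}}\cdots e_{a_{i}};\tq)$ and $I_{\tilde{\gamma}}(\tq;e_{a_{i+1}}\cdots e_{a_{k}};\tr)$. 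When $\tq$ is a regular point of the combined path this is automatic, since $t=1/2$ requires no regularization; in general it is precisely the compatibility of the asymptotic expansion at $\tq$ with concatenation of paths, which is the content of \cite[Theorem 3.242]{gf22}, and for that step I would simply invoke that theorem.
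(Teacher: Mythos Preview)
The paper does not supply its own proof of this proposition: it is stated with a direct citation to \cite[Theorem 3.242]{gf22} and no argument is given in the text. So there is no ``paper's proof'' to compare against; your proposal is going beyond what the paper does.

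That said, your outline is a correct sketch of the standard argument. The shuffle and reversal identities hold exactly at the level of the symmetrically $z$-truncated integrals (your observations that the truncation $\{z<t_{1}<\cdots<t_{k}<1-z\}$ is stable under the shuffle decomposition and under $t_{i}\mapsto 1-t_{k+1-i}$ are the key points), and your remark that the $(\log z)^{0}z^{0}$-coefficient of a product of two expansions of the shape \eqref{eq_asymptotic_expansion} is the product of the two constant terms is exactly what is needed to pass to the regularized values. You have also correctly isolated the only genuine subtlety: in the path-composition formula the intermediate point $\tq$ may be singular, so that the pieces obtained by cutting the composite simplex at the junction are truncated at only one end and do not literally coincide with the symmetric truncations defining $I_{\gamma}(\tp;\,\cdot\,;\tq)$ and $I_{\tilde{\gamma}}(\tq;\,\cdot\,;\tr)$; matching them requires exactly the compatibility of the asymptotic expansion at $\tq$ with concatenation, which is the substance of \cite[Theorem 3.242]{gf22}. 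Deferring to that reference for this step is appropriate and is precisely what the paper itself does.
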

Denote by $\mu_{N}$ the set of $N$-th roots of unity in $\bbC$.
\begin{definition}[\cite{ak04}, {\cite[Definition 3.1]{tasaka21}}, {\cite[(A.1.1)]{jarossay19}}]\label{def_mzvs}
  Let $\tzero$ and $\tone$ be the tangential base points $(0,1)$ and $(1,-1)$ respectively. Fix a positive integers $N$, a tuple of roots of unity $\etab=(\eta_{1},\ldots,\eta_{r})\in\mu_{N}^{r}$ and a tuple of positive integers $\bk=(k_{1},\ldots,k_{r})\in\bbZ_{\ge 1}^{r}$.
  \begin{enumerate}
    \item We define the (\emph{shuffle-regularized}) \emph{multiple $L$-value} by
    \[L^{\sh}\vv{\etab}{\bk}\coloneqq (-1)^{r}I_{\dch}(\tzero;e_{1/\eta_{1}}e_{0}^{k_{1}-1}\cdots e_{1/\eta_{r}}e_{0}^{k_{r}-1};\tone),\]
    where $\dch$ is the path $t\mapsto t$ from $\tzero$ to $\tone$ on $\bbC\setminus\{0,1\}$. We remark that, if $(k_{r},\eta_{r})\neq (1,1)$ this value has a series expression
    \begin{equation}\label{eq_mlv}
      L^{\sh}\vv{\eta_{1},\ldots,\eta_{r}}{k_{1},\ldots,k_{r}}=\sum_{0<n_{1}<\cdots<n_{r}}\frac{\eta_{1}^{n_{1}}\eta_{2}^{n_{2}-n_{1}}\cdots\eta_{r}^{n_{r}-n_{r-1}}}{n_{1}^{k_{1}}\cdots n_{r}^{k_{r}}}.
    \end{equation}
    The $\bbQ$-vector space spanned by all multiple $L$-values and $1$ is denoted by $\cZ_{N}$ (it is also spanned by the values of the form as the right-hand side of \eqref{eq_mlv}). In particular, $6L^{\sh}\vv{1}{2}=\pi^{2}$ is an element of $\cZ_{N}$.
    \item For $\alpha\in\bbZ/N\bbZ$, we define the \emph{symmetric multiple $L$-value} $L_{\alpha}^{\cS}\vv{\etab}{\bk}$ as the projection on $\cZ_{N}/\pi^{2}\cZ_{N}$ of the value
    \[L_{\alpha}^{\cS,\sh}\vv{\etab}{\bk}\coloneqq\sum_{i=0}^{r}(-1)^{k_{i+1}+\cdots+k_{r}}\eta_{i+1}^{\alpha}L^{\sh}\vv{\eta_{1}/\eta_{i+1},\ldots,\eta_{i}/\eta_{i+1}}{k_{1},\ldots,k_{i}}L^{\sh}\vv{1/\eta_{i+1},\eta_{r}/\eta_{i+1},\ldots,\eta_{i+2}/\eta_{i+1}}{k_{r},\ldots,k_{i+1}}.\]
  \end{enumerate}
\end{definition}
\begin{theorem}[{\cite[Corollary 6]{hirose20}}]\label{thm_rsmzv}
  Let $\bk=(k_{1},\ldots,k_{r})$ be a tuple of positive integers, $c$ the path from $\tone$ to itself which circles counterclockwise around $1$ and $\beta\coloneqq\dch^{-1}\circ c\circ\dch$. We put 
  \[\zeta_{\cRS}(\bk)\coloneqq\frac{(-1)^{r}}{2\pi i}I_{\beta}(\tzero;e_{1}e_{0}^{k_{1}-1}\cdots e_{1}e_{0}^{k_{r}-1}e_{1};\tzero).\]
  Then $\zeta_{\cRS}(\bk)\in\cZ_{1}[2\pi i]$ and it coincides with $\zeta_{\cS}(\bk)\coloneqq L_{0}^{\cS}\vv{1,\ldots,1}{\bk}$ modulo $2\pi i$.
\end{theorem}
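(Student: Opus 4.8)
The plan is to expand the regularized iterated integral along $\beta=\dch^{-1}\circ c\circ\dch$ by iterating the path-composition formula, to isolate the contribution of the small loop $c$ around $1$, and to match what is left with the definition of $L_{0}^{\cS,\sh}$. Write the integrand word as $w=e_{1}e_{0}^{k_{1}-1}\cdots e_{1}e_{0}^{k_{r}-1}e_{1}=e_{a_{1}}\cdots e_{a_{k}}$ with $k=k_{1}+\cdots+k_{r}+1$, and for $0\le p\le q\le k$ put $w_{[p,q]}\coloneqq e_{a_{p+1}}\cdots e_{a_{q}}$ (so $w_{[p,p]}=1$). Applying Proposition~\ref{prop_ii_properties}(3) twice gives
\[
  I_{\beta}(\tzero;w;\tzero)=\sum_{0\le i\le j\le k}I_{\dch}(\tzero;w_{[0,i]};\tone)\,I_{c}(\tone;w_{[i,j]};\tone)\,I_{\dch^{-1}}(\tone;w_{[j,k]};\tzero).
\]

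The essential input, which I expect to be the real obstacle, is the holonomy of the loop $c$: one has $I_{c}(\tone;e_{1}^{n};\tone)=(2\pi i)^{n}/n!$ for every $n\ge0$, while $I_{c}(\tone;u;\tone)=0$ for every word $u$ in which the letter $e_{0}$ occurs; equivalently, $\sum_{u}I_{c}(\tone;u;\tone)\,u=\exp(2\pi i\,e_{1})$. Informally, shrinking $c$ to a small circle of radius $\varepsilon$ around $1$ makes $\frac{dt}{t-1}$ of size $O(1)$ and $\frac{dt}{t}$ of size $O(\varepsilon)$ on that circle, so every iterated integral of a word containing $e_{0}$ tends to $0$ as $\varepsilon\to 0$, whereas the $e_{1}$-only part follows from the shuffle relation and $I_{c}(\tone;e_{1};\tone)=2\pi i$; rigorously, this is the standard computation of the regularized monodromy around the simple puncture $t=1$, whose local residue is $e_{1}$ (see \cite{gf22}). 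Group-likeness of $\exp(2\pi i\,e_{1})$ is of course forced by Proposition~\ref{prop_ii_properties}(1).

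Granting this, I would sort the double sum by $n\coloneqq j-i$. For $n=0$ the remaining sum is $I_{\dch^{-1}\circ\dch}(\tzero;w;\tzero)$ by a further use of Proposition~\ref{prop_ii_properties}(3), and it vanishes because $\dch^{-1}\circ\dch$ is null-homotopic in $\bbC\setminus\{0,1\}$ and $w\ne 1$. For $n\ge2$ a term survives only if $w_{[i,j]}=e_{1}^{n}$, and then, after dividing by $2\pi i$, it becomes $\frac{(2\pi i)^{n-1}}{n!}I_{\dch}(\tzero;w_{[0,i]};\tone)I_{\dch^{-1}}(\tone;w_{[j,k]};\tzero)$; each of these two regularized integrals of a word in $e_{0},e_{1}$ lies in $\cZ_{1}$ (up to sign and the reversal of Proposition~\ref{prop_ii_properties}(2) it is a shuffle-regularized multiple zeta value), so, since $n-1\ge1$, the whole term lies in $2\pi i\,\cZ_{1}[2\pi i]$. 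For $n=1$ one needs $a_{i+1}=1$, and the term equals $2\pi i\cdot I_{\dch}(\tzero;w_{[0,i]};\tone)I_{\dch^{-1}}(\tone;w_{[i+1,k]};\tzero)$; the letter $e_{1}$ sits in $w$ exactly at the start of the $j$-th block $e_{1}e_{0}^{k_{j}-1}$ (for $j=1,\ldots,r$) and once more as the trailing letter, so these terms are indexed by $j\in\{1,\ldots,r+1\}$, with prefix $e_{1}e_{0}^{k_{1}-1}\cdots e_{1}e_{0}^{k_{j-1}-1}$ and suffix $e_{0}^{k_{j}-1}e_{1}e_{0}^{k_{j+1}-1}\cdots e_{1}e_{0}^{k_{r}-1}e_{1}$.

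Finally I would evaluate the two factors of each $n=1$ term. By the definition of $L^{\sh}$ in Definition~\ref{def_mzvs} (with $N=1$) the prefix integral is $(-1)^{j-1}L^{\sh}\vv{1,\ldots,1}{k_{1},\ldots,k_{j-1}}$, and for the suffix, which has length $k_{j}+\cdots+k_{r}$ and whose reverse is $e_{1}e_{0}^{k_{r}-1}\cdots e_{1}e_{0}^{k_{j}-1}$, Proposition~\ref{prop_ii_properties}(2) gives $(-1)^{k_{j}+\cdots+k_{r}}(-1)^{r-j+1}L^{\sh}\vv{1,\ldots,1}{k_{r},\ldots,k_{j}}$. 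Multiplying the two factors with the $2\pi i$ from $I_{c}$ and the prefactor $(-1)^{r}/2\pi i$, the signs collapse via $(-1)^{r}(-1)^{j-1}(-1)^{r-j+1}=1$, and re-indexing $i=j-1$ turns the $n=1$ part into
\[
  \sum_{i=0}^{r}(-1)^{k_{i+1}+\cdots+k_{r}}\,L^{\sh}\vv{1,\ldots,1}{k_{1},\ldots,k_{i}}\,L^{\sh}\vv{1,\ldots,1}{k_{r},\ldots,k_{i+1}}=L_{0}^{\cS,\sh}\vv{1,\ldots,1}{\bk},
\]
the last equality being precisely the definition of $L_{0}^{\cS,\sh}$ with $\etab=(1,\ldots,1)$ and $\alpha=0$. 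Combining this with the previous paragraph, $\zeta_{\cRS}(\bk)=L_{0}^{\cS,\sh}\vv{1,\ldots,1}{\bk}+(\text{an element of }2\pi i\,\cZ_{1}[2\pi i])$; since $L_{0}^{\cS,\sh}\vv{1,\ldots,1}{\bk}\in\cZ_{1}$ this gives $\zeta_{\cRS}(\bk)\in\cZ_{1}[2\pi i]$, and its class modulo $2\pi i$ equals that of $L_{0}^{\cS,\sh}\vv{1,\ldots,1}{\bk}$, i.e.\ $\zeta_{\cS}(\bk)=L_{0}^{\cS}\vv{1,\ldots,1}{\bk}$ by definition. Apart from the holonomy of $c$, every step is bookkeeping with Proposition~\ref{prop_ii_properties} and with signs, so that is where the care is needed.
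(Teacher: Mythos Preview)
Your proposal is correct. The paper does not give its own proof of Theorem~\ref{thm_rsmzv}; it is quoted from \cite{hirose20}. However, exactly the computation you outline---the double path-composition along $\beta=\dch^{-1}\circ c\circ\dch$, the monodromy input
\[
  I_{c}(\tone;e_{a_{j+1}}\cdots e_{a_{s}};\tone)=\begin{cases}\dfrac{(2\pi i)^{s-j}}{(s-j)!} & \text{if }a_{j+1}=\cdots=a_{s}=1,\\ 0 & \text{otherwise},\end{cases}
\]
the vanishing of the diagonal $j=s$ terms by path composition, and the identification of the $n=1$ terms with the defining sum for $L_{0}^{\cS,\sh}$---is precisely what the paper carries out in the proof of Proposition~\ref{prop_t_smpl} for the more general polylogarithm case. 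So your argument is the specialization of that later proof to $\etab=(1,\ldots,1)$, and your sign bookkeeping agrees with it.
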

Generalizing these constructions (that is, considering as the above objects are the ``linear'' case), we attach iterated integrals to general colored rooted trees.
\begin{definition}
  Fix a positive integer $r$ and a finite subset $D$ of $\bbC[\bx,\bx^{-1}]$. 
  \begin{enumerate}
    \item A \emph{$D$-colored rooted tree} $X$ is a tuple $(V,E,\rt,C)$ such that
  \begin{itemize}
    \item a tree $(V,E)$.
    \item a vertex $\rt\in V$, called the \emph{root}.
    \item a map $C\colon V\to D$.
  \end{itemize}
    \item An \emph{index} $\bk$ on $D$-colored rooted tree $X=(V,E,\rt,C)$ is a map $E\to\bbZ_{\ge 0}$. It is said \emph{essentially positive} if $\sum_{e\in P(v,w)}\bk(e)>0$ for distinct vertices $v$ and $w$ satisfying $C(v),C(w)\neq 0$. We simply call a pair of $D$-colored rooted tree and an index on $X$ a \emph{$D$-colored pair}.
    \item For a map $\gamma\colon[0,1]\to\bbC$, we say that a $D$-colored rooted tree $X=(V,E,\rt,C)$ is \emph{$\gamma$-admissible} if the degree of every element of $C^{-1}(\{\gamma(0)\})$ is greater than $1$ and $C(\rt)\notin\{\gamma(0),\gamma(1)\}$.
    \item A $D$-colored rooted tree $X=(V,E,\rt,C)$ is \emph{linear} if $\deg(V)\subseteq\{1,2\}$, $\deg(\rt)=1$ and $C^{-1}(\{0\})=\emp$. 
  \end{enumerate}
\end{definition}
A $2$-colored rooted tree in the sense of Definition \ref{def_2crt} is nothing but a $\gamma$-admissible $\{0,1\}$-colored rooted tree in this definition if $\gamma(0)=0$ and $\gamma(1)\notin\{0,1\}$ (consider as $V_{\bullet}=C^{-1}(\{1\})$). From this analogy, we use the same symbols $\bk_{v}\coloneqq\bk(\{v,p_{v}\})$ and $\bk_{\rt}\coloneqq 1$ for a $D$-colored rooted tree $(V,E,\rt,C)$ and an index $\bk$ on $X$ as Definition \ref{def_2crt} \eqref{def_2crt_index}.
\begin{remark}
  For finite subsets $D$ and $D'$ of $\bbC[\bx,\bx^{-1}]$, every $D$-colored rooted tree is also a $D\cup D'$-colored rooted tree.
\end{remark}
\begin{definition}\label{def_ii_dcrt}
  Let $D$ be a finite subset of $\bbC[\bx,\bx^{-1}]$, $a_{1},\ldots,a_{k}$ elements of $D$, $\tp$ and $\tq$ tangential base points in $\bbC$, $\gamma$ a path from $\tp$ to $\tq$ on $\bbC\setminus D$, $X=(V,E,\rt,C)$ a $D$-colored rooted tree and $\bk$ an index on $X$. Assume that $X$ is $\gamma$-admissible for the convergence. Then we put
  \[\Delta(X;\bk)\coloneqq\left\{(t_{v})_{v\in V}\in [0,1]^{|V|}~\left|~\begin{array}{cc}t_{v}<t_{p_{v}} & \text{if } \bk_{v}>0,\\ t_{v}=t_{p_{v}} & \text{if } \bk_{v}=0.\end{array}\right.\right\}\]
  and define the (convergent) \emph{iterated integral associated with $X$} by
  \[I_{\gamma}(X;\bk)\coloneqq\int_{(t_{v})_{v}=\bt\in\Delta(X;\bk)}\prod_{v\in V}F_{v,\bk_{v}}(\gamma(\bt))\in\bbC\jump{\bx,\bx^{-1}},\]
  where $\gamma(\bt)$ stands for $(\gamma(t_{v}))_{v\in V}$ and we put
  \[F_{v,k}(\bt)\coloneqq\begin{dcases}\frac{1}{(k-1)!}(\log t_{p_{v}}-\log t_{v})^{k-1}\frac{dt_{v}}{t_{v}-C(v)} & \text{if } k>0,\\ \frac{t_{v}}{t_{v}-C(v)} & \text{if } k=0.\end{dcases}\]
\end{definition}
Note that the integration domain $\Delta(X;\bk)$ is a subset of $[0,1]^{|V|}$, but the actual number of variables for integration is $|\{v\in V\mid\bk_{v}>0\}|$.
Throughout the following two lemmas, we assume that $D$ is a finite subset of $\bbC[\bx,\bx^{-1}]$, $\gamma$ is a path on $\bbC\setminus D$, $X=(V,E,\rt,C)$ is a $\gamma$-admissible $D$-colored rooted tree and $\bk$ is an index on $X$.
\begin{proposition}\label{prop_p_contraction_1}
  Assume that there is an edge $e=\{v,w\}$ such that $C(v)=0$, $v\neq\rt$ and $w\in N_{v}$. Define $\bk(\{w,w'\})\coloneqq\bk(\{v,w'\})$ for a vertex $w'\in N_{v}\setminus\{w\}$ and 
  \[X'\coloneqq\left(V\setminus\{v\},(E\setminus\{\{v,w'\}\mid w'\in N_{v}\})\cup\{\{w,w'\}\mid w'\in N_{v}\setminus\{w\}\},\rt,C|_{V\setminus\{v\}}\right).\]
  Then we have 
  \[I_{\gamma}(X;\bk)=I_{\gamma}(X';\bk).\]
\end{proposition}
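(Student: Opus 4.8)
The plan is to prove the equality straight from the definition of $I_{\gamma}$ as an integral over $\Delta(X;\bk)$, matching the integrand $\prod_{u}F_{u,\bk_{u}}(\gamma(\bt))$ and the domain of $X$ factor by factor against those of $X'$. Here $v$ is the $0$-colored vertex and $e=\{v,w\}$ the weight-$0$ edge being contracted; since $X$ and $X'$ agree on every vertex but the deleted $v$, on all colors, and on all edges away from $v$, the only factors that can change are those attached to $v$, to $w$, and to the neighbors $w'\in N_{v}\setminus\{w\}$ that are reattached to $w$. The decisive hypothesis is $C(v)=0$: it forces the form carried by $v$ to be $\frac{d\gamma(t_{v})}{\gamma(t_{v})-C(v)}=\frac{d\gamma(t_{v})}{\gamma(t_{v})}$ and, when $\bk_{v}=0$, forces the factor $F_{v,0}(\gamma(\bt))=\frac{\gamma(t_{v})}{\gamma(t_{v})-C(v)}$ to equal $1$. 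This is exactly where $C(v)=0$ is indispensable, for if $C(v)\neq 0$ this factor would be nonconstant and $v$ could not be removed.

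First I would record that, since $e$ has weight $0$, the definition of $\Delta(X;\bk)$ forces $t_{v}=t_{w}$, so that $v$ and $w$ carry a single integration variable---the one attached to the merged vertex $w$ of $X'$---and every order inequality of $\Delta(X;\bk)$ is matched by one of $\Delta(X';\bk)$, because collapsing $e$ disturbs no other constraint. Next I would simplify the integrand at the merged site in the two positional cases. If $w=p_{v}$ then $\bk_{v}=0$, so the factor $F_{v,0}=1$ drops out by $C(v)=0$ and $F_{w,\bk_{w}}$ is left untouched. If instead $w$ is a child of $v$ then $\bk_{w}=0$, and the constant factor $F_{w,0}(\gamma(\bt))=\frac{\gamma(t_{w})}{\gamma(t_{w})-C(w)}$ absorbs the form at $v$ through $\frac{\gamma(t_{w})}{\gamma(t_{w})-C(w)}\cdot\frac{d\gamma(t_{v})}{\gamma(t_{v})}=\frac{d\gamma(t_{w})}{\gamma(t_{w})-C(w)}$, which, carrying reference point $\log\gamma(t_{p_{v}})$ and weight $\bk_{v}$, is precisely the factor of the single merged vertex of $X'$. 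Finally, for each reattached $w'$ the factor $F_{w',\bk_{w'}}$ involves $\log\gamma(t_{p_{w'}})=\log\gamma(t_{v})=\log\gamma(t_{w})$, which agrees with the reference point computed in $X'$ where now $p_{w'}=w$, while its weight is left unchanged by the defining rule $\bk(\{w,w'\})\coloneqq\bk(\{v,w'\})$. Matching all factors over the identified domain then gives $I_{\gamma}(X;\bk)=I_{\gamma}(X';\bk)$.

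The step I expect to be the main obstacle is the uniform treatment of the two positional cases, since the trivial contribution of $v$ is realized differently in each---a literal factor $1$ when $w=p_{v}$ versus a collapse of a product of forms when $w$ is a child---and in the latter case one must verify that the merged vertex inherits both the weight $\bk_{v}$ and the reference point $\log\gamma(t_{p_{v}})$ along its edge toward $p_{v}$. The remaining points---that no order inequality of $\Delta$ is lost or created under the identification $t_{v}=t_{w}$, and that $\gamma$-admissibility, hence convergence of both integrals, is preserved by the contraction---are routine and I would dispatch them briefly.
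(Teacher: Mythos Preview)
Your approach is correct and essentially the same as the paper's: identify the integration domains via $t_v=t_w$ (using $\bk(e)=0$, which both you and the paper tacitly assume though it is missing from the statement as written---compare the $2$-colored analog, where this hypothesis is explicit), then match the integrands factor by factor by splitting into the cases $w=p_v$ and $v=p_w$. The paper additionally sub-divides the latter case according to whether $\bk_v^X>0$ or $\bk_v^X=0$, a distinction you gloss over, but this is harmless since in the $\bk_v^X=0$ sub-case one again has $F_{v,0}=1$ by $C(v)=0$.
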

\begin{proof}
  Let $\bs=(s_{u})_{u\in V}\in\Delta(X;\bk)$, $\bt=(t_{u})_{u\in V}\coloneqq (\gamma(s_{u}))_{u\in V}$ and write $\bk^{X}_{u}$ (resp.~$\bk^{X'}_{u}$ or $p^{X'}_{u}$) instead of $\bk_{u}$ in $X$ (resp.~$\bk_{u}$ or $p_{u}$ in $X'$) to distinguish them. We can get an element of $\Delta(X';\bk)$ from $\bs$ by removing $s_{v}$ or $s_{w}$ because $s_{v}=s_{w}$ by the assumption. Thus the actual integration domain of $I_{\gamma}(X';\bk)$ agrees with that of $I_{\gamma}(X;\bk)$. Moreover, for $u\in V$ which is not $v$, $w$ nor having $v$ as its parent, we have
  \begin{equation}\label{eq_each_form}
    F_{u,\bk^{X}_{u}}(\bt)=F_{u,\bk^{X'}_{u}}(\bt)
  \end{equation}
  because there is no change about $u$ from $X$ to $X'$. Even $p_{u}=v$, the equality \eqref{eq_each_form} holds by the extended definition of $\bk$. Therefore it suffices to prove
  \[F_{v,\bk^{X}_{v}}(\bt)F_{w,\bk^{X}_{w}}(\bt)=F_{w,\bk^{X'}_{w}}(\bt).\]
  In the case where $w=p_{v}$, the assertion follows from $F_{v,\bk_{v}}(\bt)=1$ and that $\{w,p_{w}\}$ is the same edge in both $X$ and $X'$.
  If $v=p_{w}$ and $\bk^{X}_{v}>0$, then
  \begin{align}
    F_{w,\bk^{X}_{w}}(\bt)F_{v,\bk^{X}_{v}}(\bt)
    &=\frac{t_{w}}{t_{w}-C(w)}\frac{1}{(\bk^{X}_{v}-1)!}(\log t_{p^{X}_{v}}-\log t_{v})^{\bk^{X}_{v}-1}\frac{dt_{v}}{t_{v}}\\
    &=\frac{1}{(\bk^{X'}_{w}-1)!}(\log t_{p^{X'}_{w}}-\log t_{v})^{\bk^{X'}_{w}-1}\frac{dt_{w}}{t_{w}-C(w)}\\
    &=F_{w,\bk^{X'}_{w}}(\bt).
  \end{align}
  If $v=p_{w}$ and $\bk^{X}_{v}=0$, then $\bk^{X'}_{w}=\bk^{X}_{v}=0$ and thus
  \[F_{w,\bk^{X}_{w}}(\bt)F_{v,\bk^{X}_{v}}(\bt)=\frac{t_{w}}{t_{w}-C(w)}=F_{w,\bk^{X'}_{w}}(\bt).\]
\end{proof}
\begin{proposition}\label{prop_p_contraction_2}
  Assume that there are vertices $w_{1}$, $w_{2}$ and $v$ such that $C(v)=0$, $v\neq\rt$, $\deg(v)=2$ and $e_{i}\coloneqq\{w_{i},v\}\in E$ $(i=1,2)$. Put $e\coloneqq\{w_{1},w_{2}\}$ and $\bk(e)\coloneqq\bk(e_{1})+\bk(e_{2})$. Then we have
  \[I_{\gamma}(X;\bk)=I_{\gamma}(V\setminus\{v\},(E\setminus\{e_{1},e_{2}\})\cup\{e\},\rt,C;\bk|_{(E\setminus\{e_{1},e_{2}\})\cup\{e\}}).\]  
\end{proposition}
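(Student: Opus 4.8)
The plan is to imitate the proof of Proposition~\ref{prop_p_contraction_1}: integrate out the coordinate attached to $v$ and then compare the two integrands factor by factor. First I would fix notation. Since $\deg(v)=2$ and $v\neq\rt$, exactly one of $w_{1},w_{2}$ is the parent $p_{v}$ and the other is the unique child of $v$; after relabelling (which does not change the contracted tree, $e$ and $\bk(e)$ being symmetric in $w_{1},w_{2}$) I may assume $w_{1}=p_{v}$ and $p_{w_{2}}=v$ in $X$. Then in $X$ one has $\bk_{v}=\bk(e_{1})$ and $\bk_{w_{2}}=\bk(e_{2})$, whereas in the contracted tree $X'$ the parent of $w_{2}$ is $w_{1}$ and $\bk_{w_{2}}=\bk(e)=\bk(e_{1})+\bk(e_{2})$; every other vertex keeps its colour, its parent and its parent-edge index, so $F_{u,\bk_{u}}$ is literally the same form in $X$ and in $X'$ for every $u\in V\setminus\{v,w_{2}\}$. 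I would also record that $X'$ is again $\gamma$-admissible: deleting $v$ and inserting the edge $\{w_{1},w_{2}\}$ changes no vertex degree and does not touch $\rt$, so $I_{\gamma}(X';\bk)$ is a genuine convergent integral.

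Next I would use the surjection $\Delta(X;\bk)\to\Delta(X';\bk)$ forgetting the coordinate $t_{v}$. On $\Delta(X;\bk)$ this coordinate is frozen at $t_{w_{1}}$ when $\bk_{v}=0$, frozen at $t_{w_{2}}$ when $\bk_{v}>0$ and $\bk_{w_{2}}=0$, and otherwise varies over the interval $t_{w_{2}}<t_{v}<t_{w_{1}}$; in each case the image is exactly $\Delta(X';\bk)$, and the factor $\prod_{u\in V\setminus\{v,w_{2}\}}F_{u,\bk_{u}}(\gamma(\bt))$ is independent of $t_{v}$. By Fubini it then suffices to prove the local identity
\[\int F_{v,\bk_{v}}(\gamma(\bt))\,F_{w_{2},\bk_{w_{2}}}(\gamma(\bt))=F_{w_{2},\bk(e)}(\gamma(\bt)),\]
where on the left $t_{w_{2}}$ and the remaining coordinates are held fixed, the differential of $F_{w_{2},\bk_{w_{2}}}$ in $t_{w_{2}}$ is factored out, and $t_{v}$ is integrated over the fibre just described (using, when $t_{v}$ is frozen at $t_{w_{2}}$, the same identification of $\frac{d\gamma(t_{v})}{\gamma(t_{v})}$ with $\frac{d\gamma(t_{w_{2}})}{\gamma(t_{w_{2}})}$ that appears in Proposition~\ref{prop_p_contraction_1}). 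When $\bk(e_{1})=0$ this is immediate from $F_{v,0}(\gamma(\bt))=\gamma(t_{v})/\gamma(t_{v})=1$ (recall $C(v)=0$) after substituting $t_{v}=t_{w_{1}}$, and the case $\bk(e_{2})=0$ is the same with $t_{v}=t_{w_{2}}$; alternatively both degenerate cases are instances of Proposition~\ref{prop_p_contraction_1} applied to the edge $e_{1}$, resp.\ $e_{2}$, since the tree it produces coincides with $X'$.

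The one genuine computation is the case $\bk(e_{1}),\bk(e_{2})>0$. Here $C(v)=0$ turns the $t_{v}$-form of $F_{v,\bk_{v}}$ into $d\log\gamma(t_{v})$, so writing $A\coloneqq\log\gamma(t_{w_{1}})$ and $B\coloneqq\log\gamma(t_{w_{2}})$ (constants with respect to $t_{v}$) the left-hand side of the local identity becomes
\[\left(\int_{B}^{A}\frac{(A-u)^{\bk(e_{1})-1}}{(\bk(e_{1})-1)!}\cdot\frac{(u-B)^{\bk(e_{2})-1}}{(\bk(e_{2})-1)!}\,du\right)\frac{d\gamma(t_{w_{2}})}{\gamma(t_{w_{2}})-C(w_{2})}.\]
Because the integrand is a polynomial in $u$, the inner integral depends only on its endpoints, and the substitution $u=B+s(A-B)$ reduces it to the Beta integral $\int_{0}^{1}s^{\bk(e_{2})-1}(1-s)^{\bk(e_{1})-1}\,ds=\frac{(\bk(e_{1})-1)!\,(\bk(e_{2})-1)!}{(\bk(e_{1})+\bk(e_{2})-1)!}$. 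This collapses the bracket to $\frac{(A-B)^{\bk(e_{1})+\bk(e_{2})-1}}{(\bk(e_{1})+\bk(e_{2})-1)!}$, which is exactly $F_{w_{2},\bk(e_{1})+\bk(e_{2})}(\gamma(\bt))$ computed in $X'$ (so with parent $w_{1}$); this finishes the local identity and hence the proposition.

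The part I expect to take the most care is organisational rather than conceptual: phrasing the domain projection and the factor-by-factor comparison uniformly over the four cases according to whether $\bk(e_{1})$ and $\bk(e_{2})$ vanish, and noting that the multivaluedness of $\log\gamma(t_{v})$ is harmless because it enters only through a polynomial---hence path-independent---integrand. Beyond the Beta-integral identity, no new idea is required.
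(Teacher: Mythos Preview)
Your proposal is correct and follows the same overall strategy as the paper: dispose of the degenerate cases $\bk(e_{1})=0$ or $\bk(e_{2})=0$ by invoking Proposition~\ref{prop_p_contraction_1}, then in the generic case freeze all variables but $t_{v}$, observe that every factor $F_{u,\bk_{u}}$ with $u\notin\{v,w_{2}\}$ is unchanged, and reduce to the single one-variable identity $\int F_{v,\bk(e_{1})}F_{w_{2},\bk(e_{2})}=F_{w_{2},\bk(e_{1})+\bk(e_{2})}$.

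The only substantive difference is how that one-variable identity is verified. The paper (restricting to $\gamma=\dch$) rewrites each factor $\frac{1}{(k-1)!}(\log a-\log b)^{k-1}$ as the iterated integral $\int_{b<x_{1}<\cdots<x_{k-1}<a}\frac{dx_{1}}{x_{1}}\cdots\frac{dx_{k-1}}{x_{k-1}}$, concatenates the two blocks of $\frac{dx}{x}$-forms together with the middle $\frac{dt_{v}}{t_{v}}$, and then re-collapses into a single $(\bk(e_{1})+\bk(e_{2})-1)$-fold logarithmic power. You instead substitute $u=\log\gamma(t_{v})$ and evaluate the resulting polynomial integral by the Beta identity $\int_{0}^{1}s^{a-1}(1-s)^{b-1}\,ds=\frac{(a-1)!\,(b-1)!}{(a+b-1)!}$. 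These are two packagings of the same computation; your version is a little more direct and, since it only uses that the integrand is a polynomial in $u$, does not require restricting to $\gamma=\dch$, while the paper's version makes the underlying shuffle combinatorics of the $\frac{dx}{x}$-factors explicit.
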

\begin{proof}
  We prove only in case $\tp=\tzero$, $\tq=\tone$ and $\gamma=\dch$ (other cases are similar). If one of $\bk(e_{1})$ and $\bk(e_{2})$ is zero, the assertion follows from Proposition \ref{prop_p_contraction_1}. Hence we assume that $\bk(e_{i})$ is positive for $i\in\{1,2\}$. Let $\bt\in\Delta(X;\bk)$ and the convention about the symbols $\bk^{X'}_{u}$, $p^{X'}_{u}$, etc.~the same as Proposition \ref{prop_p_contraction_1}. Without loss of generality, we can assume that $p^{X}_{v}=w_{1}$. If a vertex $u$ is neither $v$ nor $w_{2}$, the equality \eqref{eq_each_form} is true. Then it is sufficient to prove the equality between $1$-forms
  \[\int_{t_{w_{2}}<t_{v}<t_{w_{1}}}F_{v,\bk^{X}_{v}}(\bt)F_{w_{2},\bk^{X}_{w_{2}}}(\bt)=F_{w_{2},\bk^{X'}_{w_{2}}}(\bt)\]
  because the range where $t_{v}$ moves is $[t_{w_{2}},t_{w_{1}}]$ for $\bt=(t_{u})_{u\in V}\in\Delta(X;\bk)$. This equality follows from
  \begin{align}
    &\int_{t_{w_{2}}<t_{v}<t_{w_{1}}}F_{w_{2},\bk^{X}_{w_{2}}}(\bt)F_{v,\bk^{X}_{v}}(\bt)\\
    &=\int_{t_{w_{2}}}^{t_{w_{1}}}\frac{1}{(\bk(e_{2})-1)!}(\log t_{v}-\log t_{w_{2}})^{\bk(e_{2})-1}\frac{dt_{w_{2}}}{t_{w_{2}}-C(w_{2})}\frac{1}{(\bk(e_{1})-1)!}(\log t_{w_{1}}-\log t_{v})^{\bk(e_{1})-1}\frac{dt_{v}}{t_{v}}\\
    &=\int_{t_{w_{2}}<x_{1}<\cdots<x_{\bk(e_{1})-1}<t_{v}<y_{1}<\cdots<y_{\bk(e_{1})-1}<t_{w_{1}}}\frac{dx_{1}}{x_{1}}\cdots\frac{dx_{\bk(e_{1})-1}}{x_{\bk(e_{1})-1}}\frac{dt_{v}}{t_{v}}\frac{dy_{1}}{y_{1}}\cdots\frac{dy_{\bk(e_{2})-1}}{y_{\bk(e_{2})-1}}\cdot\frac{dt_{w_{2}}}{t_{w_{2}}-C(w_{2})}\\
    &=\frac{1}{(\bk(e_{1})+\bk(e_{2})-1)!}(\log t_{w_{1}}-\log t_{w_{2}})^{\bk(e_{1})+\bk(e_{2})-1}\frac{dt_{w_{2}}}{t_{w_{2}}-C(w_{2})}\\
    &=F_{w_{2},\bk^{X'}_{w_{2}}}(\bt).
  \end{align}
  We used the extended definition $\bk^{X'}_{w_{2}}=\bk(e_{1})+\bk(e_{2})$ in the last equality.
\end{proof}
\begin{definition}
  Let $D$ be a finite subset of $\bbC[\bx,\bx^{-1}]$, $\gamma$ a path on $\bbC\setminus D$, $X=(V,E,\rt,C)$ a $D$-colored rooted tree and $\bk$ an index on $X$. We say that the pair $(X;\bk)$ is \emph{$\gamma$-harvestable} if all of the following conditions are satisfied (replace $D$ by $D\cup\{0\}$ if necessary):
  \begin{itemize}
    \item $X$ is $\gamma$-admissible.
    \item $\deg(\rt)=1$ (and thus $C(\rt)\neq\gamma(0)$).
    \item $\deg(v)\le 2$ (resp.~$\neq 2$) for any $v\in V$ such that $C(v)\neq 0$ (resp.~$C(v)=0$).
    \item For $v\in V$, if $\bk_{v}=0$ then $C(v)=0$ and $C(p_{v})\neq 0$ (and thus $\bk$ is essentially positive).
  \end{itemize}
\end{definition}
When $D=\{0,1\}$, $\gamma(0)=0$ and $\gamma(1)\notin\{0,1\}$, the above definition amounts to Definition \ref{def_2crt} \eqref{def_harvestability}.
\begin{example}\label{ex_linear}
  Let us consider the iterated integrals associated with a $\gamma$-harvestable $D$-colored rooted tree $(X;\bk)$ in the case where $X$ is linear. Then according to the last assertion of Definition \ref{def_2crt}, the pair $(X;\bk)=(V,E,\rt,C;\bk)$ is uniquely written as
  \begin{align}
    V&=\{v_{1},\ldots,v_{r+1}\},\\
    E&=\{\{v_{i},v_{i+1}\}\mid i=1,\ldots,r\},\\
    \rt&=v_{r+1},
  \end{align}
  and $\bk(\{v_{i},v_{i+1}\})=k_{i}$ ($i=1,\ldots,r$) by $k_{1},\ldots,k_{r}>0$. Moreover, when $D=\{0,1\}$, the image of $C$ is $\{1\}$. Then we can write down the associated iterated integral (along the path $\gamma$ between tangential base points $\tp$ and $\tq$) as
  \begin{align}
    I_{\gamma}(X;\bk)
    &=\int_{0<t_{1}<\cdots<t_{r+1}<1}\prod_{i=1}^{r+1}F_{v_{i},\bk(\{v_{i},v_{i+1}\})}(\gamma(t_{1}),\ldots,\gamma(t_{r+1}))\\
    &=\int_{0<t_{1}<\cdots<t_{r+1}<1}\left(\prod_{i=1}^{r}\frac{1}{(k_{i}-1)!}(\log \gamma(t_{i+1})-\log \gamma(t_{i}))^{k_{i}-1}\frac{d\gamma(t_{i})}{\gamma(t_{i})-1}\right)\cdot\frac{d\gamma(t_{r+1})}{\gamma(t_{r+1})-1}\\
    &=I_{\gamma}(\tp;e_{1}e_{0}^{k_{1}-1}\cdots e_{1}e_{0}^{k_{r}-1}e_{1};\tq),
  \end{align}
  by the usual iterated integral symbol. According to Theorem \ref{thm_rsmzv}, putting $\gamma=\beta$ we have
  \[I_{\beta}(X;\bk)=(-1)^{r}2\pi i\zeta_{\cRS}(k_{1},\ldots,k_{r})\]
  and thus our integrals include symmetric multiple zeta values. Moreover, when $\gamma$ is the path $\dch_{0,z}$ defined by $t\mapsto zt$ from $\tzero$ to the tangential base point $(z,-z)$ for $0<z<1$, we obtain
  \begin{equation}\label{eq_polylog}
    \frac{d}{dz}I_{\dch_{0,z}}(X;\bk)=(-1)^{r+1}\sum_{M=1}^{\infty}\left(\sum_{0<n_{1}<\cdots<n_{r}\le M}\frac{1}{n_{1}^{k_{1}}\cdots n_{r}^{k_{r}}}\right)z^{M}
  \end{equation}
  by expanding power series and thus our integral also include finite multiple zeta values.
\end{example}
\begin{lemma}\label{p_harvestability_characterization}
  Let $D$ be a finite subset of $\bbC[\bx,\bx^{-1}]$, $\gamma$ a path on $\bbC\setminus D$, $(X;\bk)=(V,E,\rt,C;\bk)$ a $\gamma$-harvestable $D$-colored pair. Then there exists a finite subset $D'$ of $\bbC[\bx,\bx^{-1}]$ and a finite non-empty set of $D'$-colored pairs $\{(X_{i};\bk^{(i)})\}_{i}=\{(V_{i},E_{i},\rt_{i},C_{i};\bk^{(i)})\}_{i=0,\ldots,r}$ such that all of the following conditions are satisfied:
  \begin{itemize}
  \item $(X_{1},\bk^{(1)}),\ldots,(X_{r},\bk^{(r)})$ are $\gamma$-harvestable.
  \item $C_{i}(V_{i}\setminus\{\rt_{i}\})\subseteq D$.
  \item $X_{0}$ is linear.
  \item $\bk^{(0)}(e)>0$ for $e\in E_{0}\setminus\{\{\rt_{0},u_{0}\}\}$ $(u_{0}\in V_{0}\setminus\{\rt_{0}\}$ is nearest to $\rt_{0})$.
  \item $\rt$ is the farthest vertex from $\rt_{0}$ of $X_{0}$.
  \item $(V,E)=G(X_{0},\ldots,X_{r})$ and the image of the cluster vertex under $C$ is $0$.
  \end{itemize}
  Moreover, the set $\{(X_{i};\bk^{(i)})\}_{i}$ is unique up to the choice of $C(\rt_{0}),\ldots,C(\rt_{r})$.
\end{lemma}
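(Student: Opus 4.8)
The plan is to imitate the proof of \cite[Remark~2.8]{ono17} (the special case Lemma~\ref{harvestability_characterization}), keeping track of the extra data carried by a $D$-colouring. As in that case the conclusion forces the cluster vertex to have colour $0$, so no such family can exist when $X$ is linear; hence, as in \cite{ono17}, the statement is to be read for non-linear $X$, and we assume $X$ is not linear. Since $(X;\bk)$ is $\gamma$-harvestable, $\deg(\rt)=1$, every vertex of degree $2$ has nonzero colour, and every vertex of degree $\ge 3$ has colour $0$. Walk along the unique path issuing from $\rt$, continuing through vertices of degree $2$; were this walk to end at a leaf, $X$ would have no vertex of degree $\ge 3$ and, by $\gamma$-admissibility at its two leaves, no vertex of colour $0$, i.e., $X$ would be linear, which is excluded. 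So the walk reaches a vertex of degree $\ge 3$; let $c$ be the first such vertex, let $u_{0}$ be its neighbour lying on the walk, and note $C(c)=0$. The vertices of the walk strictly before $c$ form a path that is exactly the connected component of $X\setminus\{c\}$ containing $\rt$, and all of them have nonzero colour.

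I now produce the decomposition around $c$. Let $X_{0}$ be that $\rt$-component together with the edge joining it to $c$, rooted at $\rt_{0}\coloneqq c$, with $\bk^{(0)}$ the restriction of $\bk$ and colouring the restriction of $C$ except that $C_{0}(\rt_{0})$ is taken to be any element of $\bbC[\bx,\bx^{-1}]$ avoiding $\gamma(0)$ and $\gamma(1)$; then $X_{0}$ is linear, $\rt$ is its farthest vertex from $\rt_{0}$, and every edge of $X_{0}$ other than $\{\rt_{0},u_{0}\}$ has positive index because its endpoint farther from $\rt_{0}$ has nonzero colour. For each neighbour $n_{j}$ of $c$ other than $u_{0}$ (so $j=1,\ldots,r$ with $r=\deg(c)-1\ge 2$) let $K_{j}$ be the connected component of $X\setminus\{c\}$ meeting $n_{j}$, and let $X_{j}$ be $K_{j}$ with a fresh leaf $\rt_{j}$ joined to $n_{j}$, the colouring being the restriction of $C$ on $K_{j}$ with $C_{j}(\rt_{j})$ again avoiding $\gamma(0),\gamma(1)$ and the index being the restriction of $\bk$ on $K_{j}$ together with $\bk^{(j)}(\{\rt_{j},n_{j}\})\coloneqq\bk(\{c,n_{j}\})$, which is positive by $\gamma$-harvestability since $C(c)=0$. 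Put $D'\coloneqq D\cup\{C_{0}(\rt_{0}),\ldots,C_{r}(\rt_{r})\}$. Each $(X_{j};\bk^{(j)})$ with $j\ge 1$ is $\gamma$-harvestable since the surgery at $n_{j}$ changes neither a surviving degree nor a surviving index and $\rt_{j}$ is a leaf of admissible colour; the conditions $C_{i}(V_{i}\setminus\{\rt_{i}\})\subseteq D$ hold by construction; and $(V,E)=G(X_{0},\ldots,X_{r})$ with the restricted index, because grafting identifies $\rt_{0},\ldots,\rt_{r}$ into one vertex whose neighbourhood is $\{u_{0},n_{1},\ldots,n_{r}\}$, the neighbourhood of $c$, the cluster vertex recovering $c$ with colour $0$. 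This establishes existence.

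For uniqueness up to the choice of $C(\rt_{0}),\ldots,C(\rt_{r})$, let $\{(Y_{i};\bl^{(i)})\}_{i=0,\ldots,s}$ be a second such family, with cluster vertex $c'$. From $(V,E)=G(Y_{0},\ldots,Y_{s})$, together with ``$Y_{0}$ linear'', ``$\rt$ farthest from the root of $Y_{0}$'' and ``the cluster vertex has colour $0$'', the component of $X\setminus\{c'\}$ containing $\rt$ must be $Y_{0}$ with its root deleted, a path all of whose vertices have nonzero colour in $X$. If $c'\ne c$, then in the tree $X$ the path from $c$ to $\rt$ either avoids $c'$ --- whence $c$ lies in that component, contradicting $C(c)=0$ --- or passes through $c'$, in which case the path from $c'$ to $\rt$ avoids $c$, so $c'$ lies in the $\rt$-component of $X\setminus\{c\}$, again contradicting that all its vertices have nonzero colour. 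Hence $c'=c$; then $s=r$ (both equal $\deg(c)-1$), $Y_{0}$ is forced to be the $\rt$-component of $X\setminus\{c\}$ with its reattached root and restricted index, and each $Y_{j}$ ($j\ge 1$) one of the remaining components with its reattached root and restricted index, leaving only the colours of the reattached roots free.

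The only step that is not routine bookkeeping is this uniqueness of the cluster vertex $c$ --- equivalently, the remark that the stated conditions characterize $X_{0}$ as the largest linear subtree of $X$ containing $\rt$; the remaining verifications hold because all the surgeries above alter neither the degree of any surviving vertex nor the index on any surviving edge.
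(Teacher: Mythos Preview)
Your argument follows the paper's route essentially verbatim: locate the branched vertex $c$ nearest to $\rt$ (the paper's $v'$), take $X_{0}$ to be the linear spine from $c$ back to $\rt$ rooted at $c$, and turn each remaining component of $X\setminus\{c\}$ into a $\gamma$-harvestable pair by attaching a fresh leaf root; you are in fact more thorough than the paper on the uniqueness clause, which the paper asserts without argument. One small slip: for $X_{0}$ to be \emph{linear} you need $C_{0}(\rt_{0})\neq 0$, so (as in the paper) choose $C_{0}(\rt_{0})\in D\setminus\{0\}$ rather than merely avoiding $\gamma(0),\gamma(1)$---the latter is the correct constraint for the roots $\rt_{j}$ with $j\ge 1$, and your appeal to ``$\gamma$-admissibility at its two leaves'' to exclude colour $0$ likewise only works when $\gamma(0)=0$, an assumption the paper also tacitly makes.
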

\begin{proof}
  When $C^{-1}(\{0\})=\emp$, the lemma is true with $r=0$ because $X$ is linear by the $\gamma$-harvestability.
  Let $(X;\bk)=(V,E,\rt,C;\bk)$ be a $\gamma$-harvestable $D$-colored pair and $v'$ the branched vertex nearest to $\rt$. Then we define $V_{0}\coloneqq\{u\in V\mid P(u,\rt)\subseteq P(v',\rt)\}$, let $(V_{0},E_{0})$ be the subgraph of $X$ induced by $V_{0}$, put $\rt_{0}=v$ and
  \[C_{0}(u)\coloneqq\begin{cases}C(u) & \text{if }u\in V_{0}\setminus\{v'\},\\ \text{an element of }D\setminus\{0\} & \text{if }u=v'.\end{cases}\]
  These data gives $X_{0}$ appearing in the assertion. We can also give $\bk^{(0)}\coloneqq\bk|_{E_{0}}$. Next, put $\{v_{1},\ldots,v_{r}\}\coloneqq\{u\in V\mid \{u,v'\}\in P(u,\rt)\}$, prepare a new vertex $w_{i}$ for each $1\le i\le r$,
  \begin{align}
    V_{i}&\coloneqq\{u\in V\mid P(v_{i},\rt)\subseteq P(u,\rt)\}\cup\{w_{i}\},\\
    E_{i}&\coloneqq\{e\in E\mid \text{both endpoints}\in V_{i}\}\cup\{\{v_{i},w_{i}\}\},\\
    \rt_{i}&\coloneqq w_{i},\\
    C_{i}(u)&\coloneqq\begin{cases}C(u) & \text{if }u\in V_{i}\setminus\{w_{i}\},\\ \text{an element of }\bbC[\bx,\bx^{-1}]\setminus\{\gamma(0),\gamma(1)\} & \text{if }u=w_{i}.\end{cases}
  \end{align}
  and $\bk^{(i)}\coloneqq\bk|_{E_{i}}$. By putting $D'\coloneqq D\cup\{C(\rt_{1}),\ldots,C(\rt_{r})\}$, these satisfy the required conditions.
\end{proof}
\begin{definition}\label{word_with_p_tree}
  Let $D$ be a finite subset of $\bbC[\bx,\bx^{-1}]$, $\gamma$ a path on $\bbC\setminus D$ and $(X;\bk)$ a $\gamma$-harvestable pair with a $D$-colored rooted tree $X$. We assign an element $w_{(X;\bk)}$ of $\fH_{D}$ to the pair $(X;\bk)$ by the following recursive rules: let $B_{P}$ denote the set of vertices whose degree is greater than $2$ in $D$-colored pair $P$. If $B_{(X;\bk)}=0$, the pair $(X;\bk)$ is uniquely written as
  \begin{align}
    V&=\{v_{1},\ldots,v_{r+1}\},\\
    E&=\{\{v_{i},v_{i+1}\}\mid i=1,\ldots,r\},\\
    \rt&=v_{r+1},\\
    C(v_{i})&=z_{i}\qquad (i=1,\ldots,r+1),
  \end{align}
  and $\bk(\{v_{i},v_{i+1}\})=k_{i}$ ($i=1,\ldots,r$) by using elements $z_{1},\ldots,z_{r+1}$ of $D$ and positive integers $k_{1},\ldots,k_{r}$. Then we define
  \[w_{(X;\bk)}\coloneqq e_{z_{1}}e_{0}^{k_{1}-1}\cdots e_{z_{r}}e_{0}^{k_{r}-1}e_{z_{r+1}}.\]
  Next, assume that there exists a positive integer $k$ such that $w_{P}$ is already defined for any $\gamma$-harvestable $D$-colored pair $P$ satisfying $B_{P}<k$. When $(X;\bk)=(V,E,\rt,C;\bk)$ is a $\gamma$-harvestable $D$-colored pair such that $X$ is not linear and $B_{(X;\bk)}=k$, we put
  \[w_{(X;\bk)}\coloneqq(\tilde{w}_{(X_{1};\bk^{(1)})}\sh\cdots\sh\tilde{w}_{(X_{r};\bk^{(r)})})e_{0}^{\bk^{(0)}(\{\rt_{0},v'\})}w_{(V_{0}\setminus\{\rt_{0}\},E_{0}\setminus\{\{\rt_{0},u_{0}\}\},\rt,C_{0}|_{V_{0}\setminus\{\rt_{0}\}};\bk^{(0)}|_{E_{0}\setminus\{\{\rt_{0},v'\}\}})},\]
  with the same symbols as Lemma \ref{p_harvestability_characterization}. Here $\tilde{w}$ denotes the image of $w$ under the $\bbQ$-linear map on $\fH_{\bbC[\bx,\bx^{-1}]}$ determined by $1\mapsto 0$ and $w'e_{z}\mapsto w'$ ($w'\in\fH_{\bbC[\bx,\bx^{-1}]}$ and $z\in\bbC[\bx,\bx^{-1}]$). We remark that this definition is independent of the choice of $C(\rt_{0}),\ldots,C(\rt_{r})$, which is the non-unique part of Lemma \ref{p_harvestability_characterization}.
\end{definition}
\begin{remark}
  Even if $p=0$ and $D=\{0,1\}$, the element $w_{(X;\bk)}$ in Definition \ref{word_with_p_tree} is not equal to $O_{(X';\bk)}$ in the sense of Definition \ref{word_with_tree} associated with the $2$-colored rooted tree $X'$ corresponding to $X$. The latter $O_{(X';\bk)}$ always coincides with the rest $\tilde{w}_{(X;\bk)}$ after removing the last letter of $w_{(X;\bk)}$.
\end{remark}
\begin{theorem}\label{thm_shuffle_theorem}
  Let $D$ be a finite subset of $\bbC$, $\tp$ and $\tq$ tangential base points in $\bbC$, $\gamma$ a path from $\tp$ to $\tq$ in $\bbC\setminus D$, $(X;\bk)=(V,E,\rt,C;\bk)$ be a $\gamma$-harvestable $D$-colored rooted tree and $\bk$ an index on $X=(V,E,\rt,C)$. Then we have
  \begin{equation}\label{eq_shuffle_theorem}
    I_{\gamma}(X;\bk)=I_{\gamma}(\tp;w_{(X;\bk)};\tq).
  \end{equation}
\end{theorem}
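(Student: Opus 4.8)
The plan is to show that both sides of \eqref{eq_shuffle_theorem} equal the same sum of convergent iterated integrals, indexed by the linear extensions of a finite poset attached to $(X;\bk)$. A vertex $v$ with $\bk_{v}=0$ has $C(v)=0$ by $\gamma$-harvestability, so $F_{v,0}(\gamma(\bt))\equiv 1$ and the constraint at $v$ is $t_{v}=t_{p_{v}}$; such a vertex is therefore invisible to $I_{\gamma}(X;\bk)$, and one checks directly from Definition~\ref{word_with_p_tree} (it is a branched vertex, hence a cluster vertex in the decomposition of Lemma~\ref{p_harvestability_characterization}, and contributes $e_{0}^{0}$) that it is also invisible to $w_{(X;\bk)}$. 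Delete these vertices, reconnecting their children to their parents, and then subdivide every remaining edge $e$ into $\bk(e)$ edges by inserting $\bk(e)-1$ new vertices coloured $0$; call $\mathcal{P}=\mathcal{P}(X;\bk)$ the resulting tree regarded as a poset via $u\preceq p_{u}$, with colour map $c$. By Proposition~\ref{prop_p_contraction_2} such subdivisions leave $I_{\gamma}$ unchanged, so --- exactly as in the computation of Example~\ref{ex_linear}, using $\frac{1}{(k-1)!}(\log\gamma(t_{p_{v}})-\log\gamma(t_{v}))^{k-1}=\int_{t_{v}<s_{1}<\cdots<s_{k-1}<t_{p_{v}}}\prod_{j}\frac{d\gamma(s_{j})}{\gamma(s_{j})}$ --- we may rewrite $I_{\gamma}(X;\bk)$ as the integral over the order polytope of $\mathcal{P}$ of $\prod_{u}\frac{d\gamma(\tau_{u})}{\gamma(\tau_{u})-c(u)}$.

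The analytic step is then immediate: this order polytope decomposes, up to a set of measure zero, into the simplices indexed by the linear extensions $\ell$ of $\mathcal{P}$. Since $\rt$ is the unique maximum of $\mathcal{P}$ and its minimal elements are terminals of $X$, the $\gamma$-admissibility of $X$ (no terminal is coloured $\gamma(0)$, and $C(\rt)\notin\{\gamma(0),\gamma(1)\}$) ensures that the word $w_{\ell}$ obtained by reading off the colours in the order prescribed by $\ell$ begins with a letter $\neq e_{\gamma(0)}$ and ends with $e_{C(\rt)}$, $C(\rt)\neq\gamma(1)$; hence each $I_{\gamma}(\tp;w_{\ell};\tq)$ is genuinely convergent, and $I_{\gamma}(X;\bk)=\sum_{\ell}I_{\gamma}(\tp;w_{\ell};\tq)=I_{\gamma}(\tp;\sum_{\ell}w_{\ell};\tq)$ by $\bbQ$-linearity of $I_{\gamma}(\tp;-;\tq)$. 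Everything thus reduces to the combinatorial identity $w_{(X;\bk)}=\sum_{\ell}w_{\ell}$, the sum over all linear extensions of $\mathcal{P}(X;\bk)$.

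I would prove this identity by induction on the number $B_{(X;\bk)}$ of vertices of degree greater than $2$. If $B_{(X;\bk)}=0$, then $X$ is linear, $\mathcal{P}$ is a chain with a single linear extension, and the identity is again Example~\ref{ex_linear}. For the inductive step, use Lemma~\ref{p_harvestability_characterization} to write $(V,E)=G(X_{0},\ldots,X_{r})$ with $X_{0}$ the linear trunk from the cluster vertex up to $\rt$ and $X_{1},\ldots,X_{r}$ the $\gamma$-harvestable subtrees hanging off the cluster; the cluster vertex lies in no $X_{i}$, so $B_{(X_{i};\bk^{(i)})}<B_{(X;\bk)}$ and the induction hypothesis gives $w_{(X_{i};\bk^{(i)})}=\sum_{\ell_{i}}w_{\ell_{i}}$. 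Because $\rt_{i}$ is the unique maximum of $\mathcal{P}(X_{i};\bk^{(i)})$ it is last in every linear extension, so $w\mapsto\tilde{w}$ simply deletes it and $\tilde{w}_{(X_{i};\bk^{(i)})}$ is the sum over the linear extensions of $\mathcal{P}(X_{i};\bk^{(i)})\setminus\{\rt_{i}\}$. A shuffle of words is the sum over their interleavings, so $\tilde{w}_{(X_{1};\bk^{(1)})}\sh\cdots\sh\tilde{w}_{(X_{r};\bk^{(r)})}$ is the sum over the linear extensions of the disjoint union $\bigsqcup_{i}\bigl(\mathcal{P}(X_{i};\bk^{(i)})\setminus\{\rt_{i}\}\bigr)$, which is exactly the sub-poset of $\mathcal{P}(X;\bk)$ formed by the elements strictly below the cluster vertex; right-multiplication by $e_{0}^{\bk^{(0)}(\{\rt_{0},u_{0}\})}$ and then by $w_{(\text{trunk})}$ stacks above it the chain running through the cluster vertex and up to $\rt$. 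Since $\mathcal{P}(X;\bk)$ is precisely this poset (pairwise-incomparable branches, all below the cluster vertex, which in turn sits below the trunk chain), we obtain $w_{(X;\bk)}=\sum_{\ell}w_{\ell}$, finishing the induction and, with the previous paragraph, the proof.

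The step I expect to be the main obstacle is this last induction: one must check, letter by letter, that the recursive ``shuffle then append'' prescription of Definition~\ref{word_with_p_tree} matches the ``disjoint union then stack on a chain'' operation on posets --- in particular that $w\mapsto\tilde{w}$ is the deletion of the maximal vertex $\rt_{i}$ (which rests on $\rt_{i}$ being the root of $X_{i}$, hence last in every linear extension), that the shuffle of linear-extension sums is the linear-extension sum of the disjoint union, and that the auxiliary colours attached to the $\rt_{i}$ in Lemma~\ref{p_harvestability_characterization} drop out. The remaining ingredients --- the inert $\bk_{v}=0$ vertices, the measure-zero overlaps in the order-polytope decomposition, and the convergence at the endpoints --- are routine, the last being controlled by $\gamma$-admissibility exactly as in Example~\ref{ex_linear}.
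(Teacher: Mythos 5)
Your proposal is correct, but it takes a genuinely different route from the paper. The paper proves \eqref{eq_shuffle_theorem} by induction on the weight $\ell(X;\bk)$ carried by the branch edges: its analytic input is the identity $I_{\gamma}(X;\bk)=\sum_{j}I_{\gamma}(X;\bk'_{j})$, obtained by shuffling the innermost block of $d\gamma/\gamma$-forms from the branches into the trunk (the integral analogue of \cite[Lemma 3.7]{osy21}, with a separate case $k'_{j}=1$ handled via Proposition \ref{prop_p_contraction_1}), and the matching word-level identity \eqref{eq_inductive_word} is then immediate from the defining recursions of $\sh$ and of $w_{(X;\bk)}$. You instead decompose once and for all: after subdividing edges, $I_{\gamma}(X;\bk)$ becomes the integral over the order polytope of the tree-poset, the polytope splits into simplices indexed by linear extensions, and everything reduces to the closed combinatorial formula expressing $w_{(X;\bk)}$ as the sum over linear extensions, proved by induction on the number of branched vertices. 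Your route buys a non-recursive, Yamamoto-style description of both sides, no case split at $k'_{j}=1$, and a cleaner separation of analysis from combinatorics; the paper's route buys a trivial combinatorial half at the cost of an analytic induction. The price you pay is exactly the check you flag: that Definition \ref{word_with_p_tree} (shuffle of the $\tilde{w}_{(X_{i};\bk^{(i)})}$, then the factor $e_{0}^{\bk^{(0)}(\{\rt_{0},u_{0}\})}$ --- note the paper's exponent is misprinted as $\{\rt_{0},v'\}$ --- then the trunk word, with the auxiliary colours $C_{i}(\rt_{i})$ killed by $\tilde{w}$) coincides with ``disjoint union of the branch posets, then stack the trunk chain''; your verification of this is right, including the observation that a vertex with $\bk_{v}=0$ contributes $e_{0}^{0}$ on both sides. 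Two points you should state explicitly: the convergence of each simplex integral requires that every minimal element of your poset be a terminal of $X$ not coloured $\gamma(0)$, which follows from $\gamma$-admissibility together with harvestability when $\gamma(0)=0$ (the only case the paper uses; for $\gamma(0)\neq 0$ a colour-$0$ leaf with $\bk_{v}=0$ is not excluded by the definitions, but then the paper's own convergence assertion has the same defect, so nothing is lost), and the interchange of the finite simplex decomposition with the integral is legitimate because each simplex integral is absolutely convergent and there are only finitely many linear extensions.
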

\begin{proof}
We prove this proposition by induction on $\ell(X;\bk)\coloneqq\sum_{e\in E\setminus E_{0}}\bk(e)$ (where $E_{0}$ means the set of edges of $X$ contained in $P(v',\rt)$ and $v'$ means the branched vertex nearest to the root). Example \ref{ex_linear} includes the case where $\ell(X;\bk)=1$. Assume the assertion for any $\gamma$-harvestable $D$-colored pair $P$ satisyfing $\ell(P)=\ell-1$ for some $\ell>0$. Take a $\gamma$-harvestable $D$-colored pair $(X;\bk)$ with $\ell(X;\bk)=\ell$ and $\{(X_{i};\bk^{(i)})\}_{i}=\{(V_{i},E_{i},\rt_{i},C_{i};\bk^{(i)})\}_{i=0,\ldots,r}$ as in Lemma \ref{p_harvestability_characterization} (then their grafting is $(X;\bk)$ and the cluster vertex is $v'$). Denote by $v'_{i}$ the vertex such that $\{v'_{i},\rt_{i}\}\in E_{i}$ for $i=0,\ldots,r$. This is unique by the harvestability ($i=1,\ldots,r$) or linearity ($i=0$). Then $k'_{i}\coloneqq\bk(\{v',v'_{i}\})$ is positive when $i\neq 0$ and thus we can define a new index $\bk'_{j}$ on $X$ for each $j=1,\ldots,r$ as
\[\bk'_{j}(e)\coloneqq\begin{cases}k'_{j}-1 & \text{if } e=\{v',v'_{j}\},\\ \bk(e)+1 & \text{if } e=\{v',p^{X}_{v'}\},\\ \bk(e) & \text{otherwise}.\end{cases}\]
Then we have
\begin{equation}\label{eq_osy_37}
  I_{\gamma}(X;\bk)=\sum_{j=1}^{r}I_{\gamma}(X;\bk'_{j})
\end{equation}
as \cite[Lemma 3.7]{osy21} by shuffling the domain of integration. Indeed, we can calculate the left-hand side as
\begin{align}
  &I_{\gamma}(X;\bk)\\
  &=\int_{\bt=(t_{v})_{v\in V}\in\Delta(X;\bk)}\prod_{v\in V}F_{v,\bk_{v}}(\gamma(\bt))\\
  &=\int_{\bt}\left(\prod_{v\in V\setminus\{v'_{1},\ldots,v'_{r}\}}F_{v,\bk_{v}}(\gamma(\bt))\right)\left(\prod_{i=1}^{r}\frac{d\gamma(t_{v'_{i}})}{\gamma(t_{v'_{i}})-C(v'_{i})}\int_{t_{v'_{i}}<u_{1}<\cdots<u_{k'_{i}-1}<t_{v'}}\frac{d\gamma(u_{1})}{\gamma(u_{1})}\cdots\frac{d\gamma(u_{k'_{i}-1})}{\gamma(u_{k'_{i}-1})}\right)\\
  &=\int_{\bt}\left(\prod_{v\in V\setminus\{v'_{1},\ldots,v'_{r}\}}F_{v,\bk_{v}}(\gamma(\bt))\right)\\
  &\qquad\cdot\left(\sum_{j=1}^{r}\prod_{i=1}^{r}\frac{d\gamma(t_{v'_{i}})}{\gamma(t_{v'_{i}})-C(v'_{i})}\int_{t_{v'_{i}}<u_{1}<\cdots<u_{k'_{i}-1-\delta_{i,j}}<u_{k'_{j}-1}<t_{v'}}\frac{d\gamma(u_{1})}{\gamma(u_{1})}\cdots\frac{d\gamma(u_{k'_{i}-1-\delta_{i,j}})}{\gamma(u_{k'_{i}-1-\delta_{i,j}})}\frac{d\gamma(u_{k'_{j}-1})}{\gamma(u_{k'_{j}-1})}\right)\\
  &=\int_{\bt}\left(\prod_{v\in V\setminus\{v',v'_{1},\ldots,v'_{r}\}}F_{v,\bk_{v}}(\gamma(\bt))\right)\cdot F_{v',\bk_{v'}+1}(\gamma(\bt))\cdot\left(\sum_{j=1}^{r}\prod_{i=1}^{r}F_{v'_{i},k'_{i}-\delta_{i,j}}(\bt)\right)\\
  &=\sum_{j=1}^{r}I_{\gamma}(X;\bk'_{j}).
\end{align}
Here we considered $u_{k'_{i}-1}<u_{k'_{j}-1}$ for $i\neq j$ in the third equality and interchanged names of $u_{k'_{i}-1}$ and $t_{v'}$ in the fourth equality. This calculation remains true even $k'_{j}=1$ by considering $u_{k'_{i}-1}<t_{v'_{j}}$ instead of $u_{k'_{i}-1}<u_{k'_{j}-1}$ in the third equality. When $k'_{j}>1$, since the pair $(X;\bk'_{j})$ is $\gamma$-harvestable and satisfies $\ell(X;\bk'_{j})=\ell-1$, we have
\begin{align}\label{eq_partial_word_1}
  \begin{split}
  w_{(X;\bk'_{j})}
  &=(w_{(X_{1};\bk^{(1)})}\sh\cdots\sh R_{0}^{-1}(\tilde{w}_{(X_{j};\bk^{(j)})})\sh\cdots\sh w_{(X_{r};\bk^{(r)})})e_{0}^{\bk^{(0)}(\{\rt_{0},v'\})+1}\\
  &\qquad\cdot w_{(V_{0}\setminus\{\rt_{0}\},E_{0}\setminus\{\{\rt_{0},u_{0}\}\},\rt,C_{0}|_{V_{0}\setminus\{\rt_{0}\}};\bk^{(0)}|_{E_{0}\setminus\{\{\rt_{0},v'\}\}})}
  \end{split}
\end{align}
by the induction hypothesis ($R_{z}^{-1}\colon\bbQ\oplus\fH_{\bbC[\bx,\bx^{-1}]}e_{z}\to\fH_{\bbC[\bx,\bx^{-1}]}$ is the $\bbQ$-linear map determined by $1\mapsto 0$ and $we_{z}\mapsto w$ for $w\in\fH_{\bbC[\bx,\bx^{-1}]}$). When $k'_{j}=1$, using Proposition \ref{prop_p_contraction_1} twice, we see that $I_{\gamma}(X;\bk'_{j})$ is equal to $I_{\gamma}(X';\bk''_{j})$ with $X'=(V,E'_{j},\rt,C)$,
\[E'_{j}\coloneqq(E\setminus(\{v',u_{0}\}\cup\{\{v'_{j},w\}\mid w\in A_{v'_{j}}\setminus\{v'\}\}))\cup\{\{v',w\}\mid w\in A_{v'_{j}}\setminus\{v'\}\}\cup\{\{v'_{j},u_{0}\}\}\]
and
\[\bk''_{j}(e)\coloneqq\begin{cases}\bk(\{v'_{j},w'\}) & \text{if } e=\{v',w\},~w\in N_{v'_{j}}\setminus\{v'\},\\ \bk(\{v',u_{0}\}) & \text{if } e=\{v'_{j},u_{0}\},\\ \bk(e) & \text{otherwise}.\end{cases}\]
Then it follows that
\begin{align}\label{eq_partial_word_2}
  \begin{split}
  w_{(X';\bk''_{j})}
  &=(w_{(X_{1};\bk^{(1)})}\sh\cdots\sh R_{C_{j}(v'_{j})}^{-1}(\tilde{w}_{(X_{j};\bk^{(j)})})\sh\cdots\sh w_{(X_{r};\bk^{(r)})})e_{C(v'_{j})}e_{0}^{\bk^{(0)}(\{\rt_{0},v'\})}\\
  &\qquad\cdot w_{(V_{0}\setminus\{\rt_{0}\},E_{0}\setminus\{\{\rt_{0},u_{0}\}\},\rt,C_{0}|_{V_{0}\setminus\{\rt_{0}\}};\bk^{(0)}|_{E_{0}\setminus\{\{\rt_{0},v'\}\}})}.
  \end{split}
\end{align}
Combining \eqref{eq_partial_word_1}, \eqref{eq_partial_word_2} and Definition \ref{def_general_shuffle} shows that
\begin{equation}\label{eq_inductive_word}
  \sum_{\substack{1\le j\le r\\ k'_{j}\ge 2}}w_{(X;\bk'_{j})}+\sum_{\substack{1\le j\le r\\ k'_{j}=1}}w_{(X';\bk''_{j})}=w_{(X;\bk)}.
\end{equation}
Furthermore, since if $k'_{j}\ge 2$ (resp.~$k'_{j}=1$) then $\ell(X;\bk'_{j})=\ell-1$ (resp.~$\ell(X';\bk''_{j})=\ell-1$), using \eqref{eq_osy_37}, \eqref{eq_inductive_word} and the induction hypothesis we have
\begin{align}
  I_{\gamma}(X;\bk)
  &=\sum_{j=1}^{r}I_{\gamma}(X;\bk'_{j})\\
  &=\sum_{\substack{1\le j\le r\\ k_{j}\ge 2}}I_{\gamma}(X;\bk'_{j})+\sum_{\substack{1\le j\le r\\ k_{j}=1}}I_{\gamma}(X';\bk''_{j})\\
  &=I_{\gamma}\left(\tp;\sum_{\substack{1\le j\le r\\ k_{j}\ge 2}}w_{(X;\bk'_{j})}+\sum_{\substack{1\le j\le r\\ k_{j}=1}}w_{(X';\bk''_{j})};\tq\right)\\
  &=I_{\gamma}(\tp;w_{(X;\bk)};\tq).
\end{align}
\end{proof}
\begin{proposition}\label{prop_p_representation_algorithm}
  Let $D$ be a finite subset of $\bbC[\bx,\bx^{-1}]$, $\gamma$ a path on $\bbC\setminus D$, $X=(V,E,\rt,C)$ be a $\gamma$-admissible $D$-colored rooted tree with $C(\rt)\notin\gamma(0)$ and $\bk$ an essentially positive index on $X$. Then there exists a $\gamma$-harvestable $D$-colored pair $(X_{\har};\bk_{\har})$ such that
  \[I_{\gamma}(X;\bk)=I_{\gamma}(X_{\har};\bk_{\har}).\]
\end{proposition}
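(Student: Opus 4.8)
The plan is to imitate the proof of Proposition~\ref{prop_2_algorithm}, with Propositions~\ref{prop_p_contraction_1} and~\ref{prop_p_contraction_2}---whose identities we use in both directions---playing the roles of the $2$-colored contraction lemmas of \cite{ono17}. First I would attach to each $\gamma$-admissible pair a nonnegative integer $\nu(X;\bk)$ measuring its failure to be $\gamma$-harvestable: for definiteness, the number of $v$ with $C(v)\neq 0$ and $\deg(v)\geq 3$, plus the number of $v$ with $C(v)=0$ and $\deg(v)=2$, plus the number of $v$ with $\bk_{v}=0$ such that $C(v)\neq 0$ or $C(p_{v})=0$, plus $1$ when $\deg(\rt)\geq 2$. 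Then $\nu(X;\bk)=0$ precisely when $(X;\bk)$ is $\gamma$-harvestable, and every reduction below preserves $\gamma$-admissibility, since it only inserts or deletes vertices coloured~$0$ and never changes $C(\rt)$ (we freely enlarge $D$ to $D\cup\{0\}$, as the definition permits). I would induct on $\nu$, refined lexicographically by $|V|$ where needed.

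When $\nu(X;\bk)>0$, I would remove one defect, keeping $I_{\gamma}(X;\bk)$ unchanged via one of the cited propositions. (i)~If $\deg(\rt)\geq 2$, insert a new vertex $v^{\ast}$ with $C(v^{\ast})=0$ joined to $\rt$ by an index-$0$ edge and re-attach all children of $\rt$ to $v^{\ast}$; this is the inverse direction of Proposition~\ref{prop_p_contraction_1} (legitimate since $v^{\ast}\neq\rt$), after which $\deg(\rt)=1$ and the new index-$0$ edge is of the permitted kind because $C(\rt)\neq 0$---the analogue of the hypothesis $\rt\in V_{\bullet}$ in Proposition~\ref{prop_2_algorithm}. (ii)~If some $v$ with $C(v)=0$ has $\deg(v)=2$, contract it by Proposition~\ref{prop_p_contraction_2} ($v\neq\rt$ as $C(\rt)\neq 0$). (iii)~If some $v$ with $C(v)\neq 0$ has $\deg(v)\geq 3$, where by~(i) we may take $v\neq\rt$, split $v$ by inserting a child $v'$ with $C(v')=0$ through an index-$0$ edge and moving all children of $v$ onto $v'$---again the inverse of Proposition~\ref{prop_p_contraction_1}---so that $\deg(v)=2$ and $\deg(v')\geq 3$. (iv)~If some $v$ has $\bk_{v}=0$ but $\{v,p_{v}\}$ is still a defect, then essential positivity forces $C(v)=0$ or $C(p_{v})=0$, the $0$-coloured endpoint is distinct from $\rt$, and Proposition~\ref{prop_p_contraction_1} contracts it into the other endpoint.

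The routine point is that step~(i) kills the root term permanently, step~(iii) removes one high-degree nonzero-coloured vertex (the inserted $v'$ is coloured~$0$, hence not of that type), and steps~(ii) and~(iv) each delete a vertex while destroying the targeted defect. The main obstacle is to verify that the measure strictly decreases each time: the two ``growing'' reductions (i) and~(iii) add vertices, and a Proposition~\ref{prop_p_contraction_1} contraction can raise the degree of the surviving endpoint or turn a harmless index-$0$ edge into a defect, so one must weight the summands of $\nu$ and interleave the reductions so that no step raises a higher-priority term. This is exactly the combinatorial bookkeeping carried out for $2$-colored trees in the proof of \cite[Proposition~2.9]{ono17}, and it transcribes directly since it uses nothing about the colours beyond the dichotomy ``$C(v)=0$'' versus ``$C(v)\neq 0$''. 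Iterating until $\nu=0$ yields a $\gamma$-harvestable pair $(X_{\har};\bk_{\har})$ with $I_{\gamma}(X_{\har};\bk_{\har})=I_{\gamma}(X;\bk)$.
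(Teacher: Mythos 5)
Your proposal is correct and follows essentially the same route as the paper: the paper's proof is exactly this four-step reduction (contract index-$0$ edges at $0$-colored vertices, contract degree-$2$ $0$-colored vertices, split branched vertices with $C(v)\neq 0$ and free the root by inserting $0$-colored vertices via the reverse direction of Proposition \ref{prop_p_contraction_1}, and use Proposition \ref{prop_p_contraction_2} for merging), with the termination and harvestability bookkeeping likewise deferred to the $2$-colored argument of \cite[Proposition 3.18]{osy21} (equivalently \cite[Proposition 2.9]{ono17}). Your explicit defect-counting measure and the reading of the root hypothesis as $C(\rt)\neq 0$ match the intended argument, so no gap beyond what the paper itself leaves to the cited references.
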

\begin{proof}
  The assertion can be proved in the same way as \cite[Proposition 3.18]{osy21}. Indeed, $(X_{\har};\bk_{\har})$ is obtained in the following processes:
  \begin{enumerate}
    \item If $(X;\bk)$ has an edge $e=\{v,w\}$ such that $C(v)=0$ and $\bk(e)=0$, use Proposition \ref{prop_p_contraction_1} repeatedly to delete such $e$.
    \item If $(X;\bk)$ has a vertex $v$ such that $C(v)=0$ and $\deg(v)=2$, use Proposition \ref{prop_p_contraction_2} repeatedly to delete such $v$.
    \item If $(X;\bk)$ has a vertex $v$ such that $C(v)\neq 0$ and $\deg(v)\ge 3$, use Proposition \ref{prop_p_contraction_1} repeatedly to delete such $v$.
    \item If $\deg(\rt)\ge 2$, use Proposition \ref{prop_p_contraction_1} to make the root a terminal.
  \end{enumerate}
  The $\gamma$-harvestability of $(X_{\har};\bk_{\har})$ is also proved similarly to \cite[Proposition 3.18]{osy21}.
\end{proof}
\section{Regularization and applications for finite and symmetric multiple polylogarithms}\label{sec_mlvs}
In this section, we extend Definition \ref{def_ii_dcrt} to general $D$-colored rooted trees (with essentially positive indices) by the method called \emph{regularization} and apply it to prove the $\bp$-adic and (resp.~$t$-adic) shuffle relations for finite (resp.~symmetric) multiple polylogarithms.
\subsection{Regularization and changing roots}
\begin{definition}
  Let $D$ be a finite subset of $\bbC[\bx,\bx^{-1}]$, $\gamma$ a path on $\bbC\setminus D$, $X=(V,E,\rt,C)$ a $D$-colored rooted tree and $\bk$ an essentially positive index on $X$. For $0<z<1$ and $0\le t\le 1$, define $\gamma_{z}(t)\coloneqq\gamma(z+(1-2z)t)$. Then there exists $z'\in (0,1/2)$ such that $\gamma(t)\notin D$ and $\gamma(u)\neq C(\rt)$ for every $t\in (0,z')$ and $u\in (0,z')\cup (1-z',1)$. Thus $X$ is $\gamma_{z}$-admissible when $z\in (0,z')$ and we obtain the $\gamma_{z}$-harvestable $D$-colored pair $(X_{\har};\bk_{\har})$ from Proposition \ref{prop_p_representation_algorithm}. By Theorem \ref{thm_shuffle_theorem} and \eqref{eq_asymptotic_expansion}, we get the asymptotic expression
  \begin{align}
    I_{\gamma_{z}}(X;\bk)
    &=I_{\gamma_{z}}(X_{\har};\bk_{\har})\\
    &=I_{\gamma_{z}}((\gamma(z),(1-2z)\gamma'(z));w_{(X_{\har};\bk_{\har})};(\gamma(1-z),(2z-1)\gamma'(1-z)))\\
    &=\sum_{i=0}^{N}\sum_{j=0}^{\infty}c_{i,j}(\log z)^{i}z^{j}\qquad (z\to +0),
  \end{align}
  where $c_{i,j}\in\bbC\jump{\bx,\bx^{-1}}$ and we define the \emph{regularized iterated integral associated with $(X;\bk)$} as
  \[I_{\gamma}(X;\bk)\coloneqq c_{0,0}.\] 
\end{definition}
When $(X;\bk)$ is $\gamma$-admissible, the limit $\lim_{z\to 0}I_{\gamma_{z}}(X;\bk)$ converges coefficientwise as the multivariable Laurent series of $x_{1},\ldots,x_{r}$ and this definition agrees with Definition \ref{def_ii_dcrt}.
\begin{proposition}\label{prop_root_change}
  Let $D$ be a finite subset of $\bbC[\bx,\bx^{-1}]$, $\gamma$ a path on $\bbC\setminus D$, $X=(V,E,\rt,C)$ a $D$-colored rooted tree and $\bk$ an essentially positive index on $X$. Choose a vertex $\rt'\in V$ and write $X'\coloneqq (V,E,\rt',C)$. Then there exist two families $\{(Y_{j};\bl^{(j)})\}_{j=1,\ldots,N}$ and $\{(Z_{j};\bh^{(j)})\}_{j=1,\ldots,N}$ of $D$-colored pairs and a sequence $\{a_{j}\}_{j=1,\ldots,N}$ of integers such that
  \begin{equation}\label{eq_root_change}
    I_{\gamma}(X;\bk)=(-1)^{\sum_{e\in P(\rt,\rt')}\bk(e)}I_{\gamma}(X';\bk)+\sum_{j=1}^{N}(-1)^{a_{j}}I_{\gamma}(Y_{j};\bl^{(j)})I_{\gamma}(Z_{j};\bh^{(j)}).
\end{equation}
\end{proposition}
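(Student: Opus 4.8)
The plan is to reduce to an adjacent change of root and then combine the word representation of tree integrals (Theorem~\ref{thm_shuffle_theorem}) with the antipode (signed reversal) identity of the shuffle algebra. First I would reduce to the case $\rt'\in N_{\rt}$, by induction on the number of edges of $P(\rt,\rt')$: the case $\rt'=\rt$ is trivial ($N=0$), and if $\rt'\neq\rt$, letting $\rt''$ be the neighbour of $\rt$ on $P(\rt,\rt')$ and $X''\coloneqq(V,E,\rt'',C)$, one applies the adjacent case to the step $\rt\leadsto\rt''$ and the inductive hypothesis to the step $\rt''\leadsto\rt'$ inside $X''$; substituting gives $I_{\gamma}(X;\bk)=(-1)^{\bk(\{\rt,\rt''\})+\sum_{e\in P(\rt'',\rt')}\bk(e)}I_{\gamma}(X';\bk)$ plus a $\bbZ$-linear combination of products $I_{\gamma}(Y)I_{\gamma}(Z)$ (the factor $(-1)^{\bk(\{\rt,\rt''\})}$ in front of the second step's corrections being absorbed into the $a_{j}$), and since $P(\rt,\rt')$ is the disjoint union of $\{\{\rt,\rt''\}\}$ and $P(\rt'',\rt')$ as edge sets, the exponent is $\sum_{e\in P(\rt,\rt')}\bk(e)$.

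For the adjacent case, set $e_{0}\coloneqq\{\rt,\rt'\}$ and $k_{0}\coloneqq\bk(e_{0})$ (one may assume $k_{0}>0$: otherwise $C(\rt)=0$ or $C(\rt')=0$ and Proposition~\ref{prop_p_contraction_1} reduces to fewer vertices). The sign $(-1)^{k_{0}}$ in \eqref{eq_root_change} is precisely the one making $I_{\gamma}(X;\bk)-(-1)^{k_{0}}I_{\gamma}(X';\bk)$ ``decomposable''. After reducing via Proposition~\ref{prop_p_representation_algorithm} together with the contraction Propositions~\ref{prop_p_contraction_1}, \ref{prop_p_contraction_2} (which leave all iterated integrals unchanged, enlarging $D$ by $\{0\}$ if needed) to $\gamma$-harvestable trees, Theorem~\ref{thm_shuffle_theorem} gives $I_{\gamma}(X;\bk)=I_{\gamma}(\tp;w_{(X;\bk)};\tq)$ and $I_{\gamma}(X';\bk)=I_{\gamma}(\tp;w_{(X';\bk)};\tq)$, and unwinding Definition~\ref{word_with_p_tree} at the two roots shows that $w_{(X;\bk)}$ and $w_{(X';\bk)}$ agree except on the sub-word $w_{0}$ of length $k_{0}+1$ built from $e_{0}$ and the two vertex letters $e_{C(\rt)},e_{C(\rt')}$, where one is the reverse of the other. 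I would then invoke the antipode identity of the shuffle Hopf algebra, $v+(-1)^{|v|}\overleftarrow{v}=-\sum v_{(1)}\sh\bigl((-1)^{|v_{(2)}|}\overleftarrow{v_{(2)}}\bigr)$ for $v\neq 1$ (the sum over factorizations $v=v_{(1)}v_{(2)}$ into two nonempty words), which for $v=w_{0}$ reads, since $|w_{0}|=k_{0}+1$, as $w_{0}-(-1)^{k_{0}}\overleftarrow{w_{0}}=-\sum(w_{0})_{(1)}\sh\bigl((-1)^{|(w_{0})_{(2)}|}\overleftarrow{(w_{0})_{(2)}}\bigr)$. Propagating this through the remaining (recursively built) common part of the two words—recognizing each prefix and reversed suffix that occurs as the word of a $\gamma$-harvestable $D$-colored pair obtained as a sub-chain or sub-forest of $X$, with root colour $0$ when a piece ends inside the $e_{0}$-block—then applying $I_{\gamma}(\tp;-;\tq)$, the shuffle property of Proposition~\ref{prop_ii_properties}, and once more Theorem~\ref{thm_shuffle_theorem} to re-express the products of word integrals as products of tree integrals, yields \eqref{eq_root_change}.

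The step I expect to be the main obstacle is this last bookkeeping: following the recursive, root-dependent construction of $w_{(X;\bk)}$ through the change of root and checking that every deconcatenation piece is genuinely the word of a $D$-colored pair. This is the iterated-integral analogue of \cite[Proposition~3.6]{osy21}; the new phenomenon is that the ``head'' and ``tail'' letters of the chain $P(\rt,\rt')$ no longer cancel—as they do for finite and symmetric multiple zeta values—but recombine, via the antipode identity, into shuffle products, which is exactly the origin of the correction terms $I_{\gamma}(Y_{j};\bl^{(j)})\,I_{\gamma}(Z_{j};\bh^{(j)})$.
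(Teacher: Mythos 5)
Your first paragraph (reduction to the adjacent case by walking along $P(\rt,\rt')$ and absorbing signs into the $a_{j}$) matches the paper's final step. The gap is in your treatment of the adjacent case. The claim that $w_{(X;\bk)}$ and $w_{(X';\bk)}$ ``agree except on a sub-word of length $k_{0}+1$, where one is the reverse of the other'' is false in general: changing the root does not act locally on the word but reorganizes the whole recursive construction of Definition \ref{word_with_p_tree}, turning concatenations into shuffles and vice versa. Already for a linear $X$ with vertices $v_{1},\ldots,v_{r},\rt$, colours $z_{i}=C(v_{i})$, $z_{r+1}=C(\rt)$ and $\rt'=v_{r}$ adjacent to $\rt$, one has $w_{(X;\bk)}=e_{z_{1}}e_{0}^{k_{1}-1}\cdots e_{z_{r}}e_{0}^{k_{r}-1}e_{z_{r+1}}$, whereas $X'$ must first be harvested (its root has degree $2$) and the resulting word is of the shape $\bigl(e_{z_{1}}e_{0}^{k_{1}-1}\cdots e_{z_{r-1}}e_{0}^{k_{r-1}-1}\sh e_{z_{r+1}}e_{0}^{k_{r}-1}\bigr)e_{z_{r}}$; the two words differ globally, not in a block of length $k_{0}+1$. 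Consequently the antipode identity cannot be invoked ``locally'' as you describe; the identity you would actually need is a global comparison of the two root-dependent words, which is precisely the bookkeeping you yourself flag as the main obstacle and leave unproved. (The sign analysis via $|w_{0}|=k_{0}+1$ also only makes sense in the full-reversal linear situation, not for an adjacent root change.) A second, smaller issue: for $\bk(\{\rt,\rt'\})=0$ you appeal to Proposition \ref{prop_p_contraction_1} and ``fewer vertices'' without setting up the induction this requires, and that proposition only contracts a $0$-coloured \emph{non-root} vertex; in fact no reduction is needed, since for a $0$-index edge the domains and integrands of $I_{\gamma_{z}}(X;\bk)$ and $I_{\gamma_{z}}(X';\bk)$ literally coincide.

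For contrast, the paper's proof of the adjacent case avoids words entirely and works on the integration domain: if $\bk(\{\rt,\rt'\})=1$, dropping the ordering constraint between $t_{\rt}$ and $t_{\rt'}$ identifies $\Delta(Y;\bl)\times\Delta(Z;\bh)$ (up to a measure-zero set) with the disjoint union of $\Delta(X;\bk)$ and $\Delta(X';\bk)$, where $Y$ and $Z$ are the two components of $X$ minus the edge $\{\rt,\rt'\}$ with the restricted indices; hence $I_{\gamma_{z}}(X;\bk)=I_{\gamma_{z}}(Y;\bl)I_{\gamma_{z}}(Z;\bh)-I_{\gamma_{z}}(X';\bk)$, and one takes constant terms of the asymptotic expansions in $z$. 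The case $\bk(\{\rt,\rt'\})\ge 2$ is reduced to index $1$ by inserting $0$-coloured vertices along the edge via Proposition \ref{prop_p_contraction_2} and moving the root one step at a time, which produces the sign $(-1)^{\bk(\{\rt,\rt'\})}$ and one product term per step. If you want to salvage your word-theoretic route, you would have to prove the global identity relating $w_{(X;\bk)}$, $w_{(X'_{\har};\bk_{\har})}$ and the shuffle $w_{(Y_{\har};\bl_{\har})}\sh w_{(Z_{\har};\bh_{\har})}$ in $\fH_{D\cup\{0\}}$ by induction on the recursive structure of Definition \ref{word_with_p_tree}; as it stands, that key step is missing, so the proposal does not yet constitute a proof.
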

\begin{proof}
  First, we consider the case where $\rt$ is adjacent to $\rt'$. Take a sufficienly small $z>0$. If $\bk(\{\rt,\rt'\})=0$, it is obvious that $I_{\gamma_{z}}(X;\bk)=I_{\gamma_{z}}(X';\bk)$ by definition and thus the sum on the right-hand side of \eqref{eq_root_change} is $0$. Assume $\bk(\{\rt,\rt'\})=1$. Then the difference between $\Delta(X;\bk)$ and $\Delta(X';\bk)$ is just the order of variables corresponding to $\rt$ and $\rt'$. Therefore we have
  \begin{align}
    I_{\gamma_{z}}(X;\bk)
    &=\int_{\bt\in\Delta(X;\bk)}\prod_{v\in V}F_{v,\bk_{v}^{X}}(\gamma_{z}(\bt))\\
    &=\int_{\bt\in(\Delta(Y;\bl)\times\Delta(Z;\bh))\setminus\Delta(X';\bk)}\prod_{v\in V}F_{v,\bk_{v}^{X}}(\gamma_{z}(\bt))\\
    &=I_{\gamma_{z}}(Y;\bl)I_{\gamma_{z}}(Z;\bl)-I_{\gamma_{z}}(X';\bk),
  \end{align}
  where $Y$ (resp.~$Z$) is the connected component containing $\rt$ (resp.~$\rt'$) when the edge $\{\rt,\rt'\}$ is removed from $X$ and $\bl$ (resp.~$\bh$) is the restriction of $\bk$ to the set of edges of $Y$ (resp.~$Z$). Retake $z>0$ to let all trees appearing $\gamma_{z}$-admissible if necessary. Taking the constant terms of the asymptotic expansions around $z=0$ shows the assertion. When $\bk(\{\rt,\rt'\})\ge 2$, what we should prove is a consequence of the case where $\bk=(\{\rt,\rt'\})$ by using Proposition \ref{prop_p_contraction_2}. We obtain the general assertion by repeating the adjacent case.
\end{proof}
\subsection{\texorpdfstring{$\bp$-adic finite multiple polylogarithms}{A-hat multiple polylogarithms}}
\begin{definition}[Adelic ring; {\cite[Definition 2.2]{seki19}}]
  Let $R$ be a commutative ring, $\Sigma$ a subset of $R$ and $n$ a positive integer. We put
  \[\cA_{n,R}^{\Sigma}\coloneqq\left(\prod_{a\in\Sigma}R/a^{n}R\right)\Biggm/\left(\bigoplus_{a\in\Sigma}R/a^{n}R\right).\]
  Then $\{\cA_{n,R}^{\Sigma}\}_{n=1,2,\ldots}$ equipped with natural projections becomes a projective system and thus we define
  \[\hcA_{R}^{\Sigma}\coloneqq\varprojlim_{n}\cA_{n,R}^{\Sigma}.\]
  This ring is equipped with the projective limit topology, where each $\cA_{n,R}$ is given the discrete topology. Then one has the (not necessarily continuous) natural projection
  \[\pi_{R}^{\Sigma}\colon\prod_{a\in\Sigma}\left(\varprojlim_{n}R/a^{n}R\right)\to\hcA_{R}^{\Sigma},\]
  according to \cite[Lemma 2.3]{seki19}. 
\end{definition}
\begin{remark}
  In the original definitions of $\hcA^{\Sigma}_{n,R}$ and $\hcA^{\Sigma}_{R}$ of \cite{seki19}, the superscript $\Sigma$ means an arbitrary family of ideals of $R$. However we slightly changed the definition because only the case where every element of $\Sigma$ is a principal ideal appears in this paper.
\end{remark}
In the rest of this section, fix a positive integer $N$ and an element $\alpha\in\bbZ/N\bbZ$ and put
\[\cP(N;\alpha)\coloneqq\{p\in\cP\mid p\equiv\alpha~(\mod~N)\}.\]
We call the element $\bp\coloneqq\pi_{\bbZ[\bx]}^{\cP(N;\alpha)}((p)_{p\in\cP(N;\alpha)})$ the \emph{infinitely large prime} (see \cite{seki17} for the case where $r=0$ and $N=1$). We see that $\hcA_{\bbZ[\bx]}^{\cP(N;\alpha)}$ is complete with respect to the $\bp$-adic topology via \cite[Lemma 2.5]{seki19}. 
\begin{definition}\label{def_p_fmpl}
  Let $X=(V,E,\rt,C)$ be a $\{0,1,x_{1}^{-1},\ldots,x_{r}^{-1}\}$-colored rooted tree and $\bk$ an index on $X$. For a positive integer $M$, we define a polynomial $\fmpl_{<M}(X;\bk)\in\bbQ[\bx]$ by
  \begin{equation}\label{eq_ov_mpl}
    (-1)^{|V|-|C^{-1}(\{0\})|}\frac{d}{dz}I_{\dch_{0,z}}(X;\bk)=\sum_{M=1}^{\infty}\fmpl_{<M}(X;\bk)z^{M-1}.
  \end{equation}
  Then we define the \emph{$\bp$-adic finite multiple polylogarithm} (\emph{$\bp$-adic FMPL}) \emph{associated with $X$} by
  \[\fmpl_{\hcA,\alpha}(X;\bk)\coloneqq\pi_{\bbZ[\bx]}^{\cP(N;\alpha)}((\fmpl_{<p}(X;\bk))_{p\in\cP(N;\alpha)})\in\hcA_{\bbZ[\bx]}^{\cP(N;\alpha)}.\]
\end{definition}
\begin{example}
  Assume that $X$ is linear in Definition \ref{def_p_fmpl} and take the same symbols as Example \ref{ex_linear}. When $C(\rt)=1$, we can write 
    \[\fmpl_{<M}(X;\bk)=\fmpl_{<M}\vv{C(v_{1}),\ldots,C(v_{r})}{k_{1},\ldots,k_{r}}\coloneqq\sum_{0<n_{1}<\cdots<n_{r}<M}\frac{C(v_{1})^{n_{1}}C(v_{2})^{n_{2}-n_{1}}\cdots C(v_{r})^{n_{r}-n_{r-1}}}{n_{1}^{k_{1}}\cdots n_{r}^{k_{r}}}\]
  and
    \[\fmpl_{\hcA,\alpha}(X;\bk)=\fmpl_{\hcA,\alpha}\vv{C(v_{1}),\ldots,C(v_{r})}{k_{1},\ldots,k_{r}}\coloneqq\pi_{\bbZ[\bx]}^{\cP(N;\alpha)}\left(\left(\fmpl_{<p}\vv{C(v_{1}),\ldots,C(v_{r})}{k_{1},\ldots,k_{r}}\right)_{p\in\cP}\right)\in\hcA_{\bbZ[\bx]}^{\cP(N;\alpha)}.\]
We call $\fmpl_{\hcA,\alpha}\vv{x_{1},\ldots,x_{r}}{k_{1},\ldots,x_{r}}$ the \emph{$\bp$-adic finite multiple polylogarithm}. For later convenience, we extend $\bbQ$-linearly these maps $\fmpl_{\hcA,\alpha}$ and $\fmpl_{<M}$ for a positive integer $M$. We also remark that, even the condition $C(\rt)=1$ is not satisfied, the associated polylogarithm is written as
\[\fmpl_{\hcA}(X;\bk)=C(\rt)^{\bp}\pi_{\bbZ[\bx]}^{\cP(N;\alpha)}\left(\left(\fmpl_{<p}\vv{C(v_{1})/C(\rt),\ldots,C(v_{r})/C(\rt)}{k_{1},\ldots,k_{r}}\right)_{p\in\cP}\right)\in\hcA_{\bbZ[\bx,C(\rt),C(\rt)^{-1}]}^{\cP(N;\alpha)},\]
as long as $C(\rt)\neq 0$. Here $a^{\bp}$ denotes the natural projection of $(a^{p})_{p\in\cP(N;\alpha)}$.
\end{example}
\begin{remark}
  When $N=1$, the definition of $\fmpl_{\hcA,\alpha}\vv{\bx}{\bk}$ is due to Seki \cite[Definition 2.7]{seki19} (though the order of arguments is different), who call it the \emph{finite shuffle multiple polylogarithm}.
\end{remark}
\begin{proposition}\label{prop_p_fmpl}
  Let $0<z<1$ be a real number, $X=(V,E,\rt,C)$ a $\dch_{0,z}$-admissible $\{0,1\}$-colored rooted tree and $\bk$ an essentially positive index on $X$. Assume that $N=1$. Then the element $\fmpl_{\hcA,\alpha}(X;\bk)$ coincides with $\zeta_{\cA}(X';\bk)$ modulo $\bp$. Here $X'$ stands for the $2$-colored rooted tree corresponding to $X$.
\end{proposition}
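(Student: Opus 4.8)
My plan is to reduce the statement to the $\dch_{0,z}$-harvestable case and there to compare $I_{\dch_{0,z}}(X;\bk)$ with the iterated integral of the word $O_{(X';\bk)}$ of Definition~\ref{word_with_tree} attached to the $2$-coloured tree $X'$.

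First I would note that $\dch_{0,z}(0)=0$ and $\dch_{0,z}(1)=z\notin\{0,1\}$, so $\dch_{0,z}$-admissibility together with $D=\{0,1\}$ forces $C(\rt)=1$; under the identification $V_{\bullet}=C^{-1}(\{1\})$ the root of $X'$ then lies in $V_{\bullet}$ and $\bk$ is essentially positive also as a $2$-coloured index. Next I would apply Proposition~\ref{prop_p_representation_algorithm} along $\dch_{0,z}$ to obtain a $\dch_{0,z}$-harvestable pair $(X_{\har};\bk_{\har})$ with $I_{\dch_{0,z}}(X;\bk)=I_{\dch_{0,z}}(X_{\har};\bk_{\har})$, and check that every step of that algorithm --- an instance of Proposition~\ref{prop_p_contraction_1} or Proposition~\ref{prop_p_contraction_2}, or of one of their inverses --- (i) inserts or deletes only vertices coloured by $0$, hence leaves $|V|-|C^{-1}(\{0\})|$ unchanged, so that it preserves $\fmpl_{<M}$ for every $M$ by \eqref{eq_ov_mpl}, and (ii) read on the corresponding $2$-coloured tree is one of the $\zeta_{\cA}$-preserving operations of \cite[Propositions~2.2, 2.3 and 2.4]{ono17} recalled in \S\ref{sec_def}. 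Consequently $\fmpl_{\hcA,\alpha}(X;\bk)=\fmpl_{\hcA,\alpha}(X_{\har};\bk_{\har})$ and $\zeta_{\cA}(X';\bk)=\zeta_{\cA}((X_{\har})';\bk_{\har})$, which reduces everything to the case where $(X;\bk)$ is itself $\dch_{0,z}$-harvestable.

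In the harvestable case I would proceed as follows. By Theorem~\ref{thm_shuffle_theorem}, $I_{\dch_{0,z}}(X;\bk)=I_{\dch_{0,z}}(\tzero;w_{(X;\bk)};(z,-z))$; since $C(\rt)=1$ the last letter of $w_{(X;\bk)}$ is $e_{1}$, so by the remark following Definition~\ref{word_with_p_tree} one has $w_{(X;\bk)}=O_{(X';\bk)}e_{1}$. A short induction on the recursive definition of $O$ (shuffle and concatenation are additive on the number of occurrences of $e_{1}$) shows that $O_{(X';\bk)}$ is homogeneous of $e_{1}$-degree $\rho\coloneqq|C^{-1}(\{1\})|-1$; write $O_{(X';\bk)}=\sum_{\lambda}c_{\lambda}\,e_{1}e_{0}^{a^{\lambda}_{1}-1}\cdots e_{1}e_{0}^{a^{\lambda}_{\rho}-1}$ with $c_{\lambda}\in\bbQ$ and $a^{\lambda}_{i}\ge 1$. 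For each $\lambda$ the word $e_{1}e_{0}^{a^{\lambda}_{1}-1}\cdots e_{1}e_{0}^{a^{\lambda}_{\rho}-1}e_{1}$ is the one assigned in Example~\ref{ex_linear} to the linear pair of index $(a^{\lambda}_{1},\ldots,a^{\lambda}_{\rho})$, so \eqref{eq_polylog} together with the $\bbQ$-linearity of $w\mapsto I_{\dch_{0,z}}(\tzero;w;(z,-z))$ gives, for every positive integer $M$,
\[ [z^{M-1}]\,\frac{d}{dz}I_{\dch_{0,z}}(\tzero;O_{(X';\bk)}e_{1};(z,-z))=(-1)^{\rho+1}\sum_{\lambda}c_{\lambda}\sum_{0<n_{1}<\cdots<n_{\rho}<M}\frac{1}{n_{1}^{a^{\lambda}_{1}}\cdots n_{\rho}^{a^{\lambda}_{\rho}}}. \]
Since $|V|-|C^{-1}(\{0\})|=|C^{-1}(\{1\})|=\rho+1$, comparing with \eqref{eq_ov_mpl} would give $\fmpl_{<M}(X;\bk)=\sum_{\lambda}c_{\lambda}\sum_{0<n_{1}<\cdots<n_{\rho}<M}n_{1}^{-a^{\lambda}_{1}}\cdots n_{\rho}^{-a^{\lambda}_{\rho}}$ for every $M$. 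Reducing modulo $\bp$ --- that is, modulo $p$ at each prime, which is legitimate for all but finitely many $p$ --- and using $N=1$ then yields $\fmpl_{\hcA,\alpha}(X;\bk)=\sum_{\lambda}c_{\lambda}\zeta_{\cA}(a^{\lambda}_{1},\ldots,a^{\lambda}_{\rho})=Z_{\cA}(O_{(X';\bk)})$, which equals $\zeta_{\cA}(X';\bk)$ by Proposition~\ref{prop_2_word}.

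The step I expect to be the main obstacle is the bookkeeping in the reduction: one must match the $D$-coloured harvesting procedure of Proposition~\ref{prop_p_representation_algorithm} with the $2$-coloured one behind Proposition~\ref{prop_2_algorithm} step for step, while keeping track of the sign $(-1)^{|V|-|C^{-1}(\{0\})|}$. This is the direct analogue of the argument for \cite[Proposition~3.18]{osy21}; beyond it, the proof is only linear algebra and the power-series identity \eqref{eq_polylog}.
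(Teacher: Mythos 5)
Your proposal is correct and follows essentially the same route as the paper: reduce to the $\dch_{0,z}$-harvestable case via the harvesting algorithm of Proposition \ref{prop_p_representation_algorithm}, then combine Theorem \ref{thm_shuffle_theorem} with the identification $w_{(X;\bk)}=O_{(X';\bk)}e_{1}$ and the linear computation of Example \ref{ex_linear}/\eqref{eq_polylog}. The only noteworthy (but minor) difference is that you track the arithmetic side only at the $\cA$-level, invoking Proposition \ref{prop_2_word} and Ono's contraction propositions as black boxes, whereas the paper establishes the sharper truncated identity $\fmpl_{<M}(X;\bk)=\zeta_{<M}(X';\bk)$ for every positive integer $M$ by observing that the proofs of Ono's Propositions 3.2 and 2.9 remain valid for arbitrary truncation.
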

\begin{proof}
  It is sufficient to show 
  \begin{equation}\label{eq_agreement_with_Ono}
    (-1)^{|V_{\bullet}|}\frac{d}{dz}I_{\dch_{0,z}}(X;\bk)=\sum_{M=1}^{\infty}\zeta_{<M}(X';\bk)z^{M-1},
  \end{equation}
  where $V_{\bullet}\coloneqq C^{-1}(\{1\})$. First we prove \eqref{eq_agreement_with_Ono} in case $(X;\bk)$ is $\dch_{0,z}$-harvestable. For a positive integer $M$, let $Z_{<M}\colon\fH^{1}\to\bbQ$ be the $\bbQ$-linear map determined by $1\mapsto 1$ and
  \[e_{1}e_{0}^{k_{1}-1}\cdots e_{1}e_{0}^{k_{r}-1}\mapsto\zeta_{<M}(k_{1},\ldots,k_{r})\qquad (k_{1},\ldots,k_{r}\in\bbZ_{\ge 1}).\]
  The proof of \cite[Proposition 3.2]{ono17} showed
  \[\zeta_{<p}(X';\bk)=Z_{<p}(O_{(X';\bk)})\]
  for each prime number $p$, however it remains available to prove
  \[\zeta_{<M}(X';\bk)=Z_{<M}(O_{(X';\bk)})\]
  for a general positive integer $M$. Thus, taking the generating function, we have
  \[\sum_{M=1}^{\infty}(\zeta_{<M}(X';\bk))z^{M-1}=\frac{(-1)^{|V_{\bullet}|-1}}{1-z}I_{\dch_{0,z}}(\tzero;O_{(X';\bk)};(z,-z))=(-1)^{|V_{\bullet}|}\frac{d}{dz}I_{\dch_{0,z}}(\tzero;w_{(X;\bk)};(z,-z)).\]
  It follows from Theorem \ref{thm_shuffle_theorem} that the right-hand side agrees with the left-hand side of \eqref{eq_agreement_with_Ono}. Next we deal with the general case. Since the construction of $(X_{\har};\bk_{\har})$ is same as that of the harvestable form $(X'_{\h};\bk_{\h})$ of $(X';\bk)$ in \cite[Definition 2.10]{ono17} when no change of roots appears, $(X_{\har};\bk_{\har})$ agrees with the $\dch_{0,z}$-admissible $\{0,1\}$-colored pair $(\widetilde{X'_{\h}};\bk_{\h})$ corresponding to the harvestable form $(X'_{\h};\bk_{\h})$ of $(X';\bk)$. Hence we have
  \begin{align}
    (-1)^{|V_{\bullet}|}\frac{d}{dz}I_{\dch_{0,z}}(X;\bk)
    &=(-1)^{|V_{\bullet}|}\frac{d}{dz}I_{\dch_{0,z}}(X_{\har};\bk_{\har})\\
    &=(-1)^{|V_{\bullet}|}\frac{d}{dz}I_{\dch_{0,z}}(\widetilde{X'_{\h}};\bk_{\h})\\
    &=\sum_{M=1}^{\infty}\zeta_{<M}(X'_{\h};\bk_{\h})z^{M-1}\\
    &=\sum_{M=1}^{\infty}\zeta_{<M}(X';\bk)z^{M-1}.
  \end{align}
  Here we used \eqref{eq_agreement_with_Ono} in the third equality and \cite[Proposition 2.9]{ono17} in the fourth equality. Note that the original proof of \cite[Proposition 2.9]{ono17} is only written in case $M$ is a prime number, however it holds for an arbitrary positive integer $M$ since the root is not changed in our case.
\end{proof}
Denote by $\cR$ the $\bbQ$-linear space spanned by tuples of the form $\vv{a_{1},\ldots,a_{r}}{k_{1},\ldots,k_{r}}$, where $k_{1},\ldots,k_{r}$ are positive integers and $a_{1},\ldots,a_{r}\in\{1,x_{1},\ldots,x_{r}\}$. Then there exists a $\bbQ$-linear bijection determined by
\[W\colon\cR\ni\vv{a_{1},\ldots,a_{r}}{k_{1},\ldots,k_{r}}\mapsto\prod_{i=1}^{r}e_{a_{i}}e_{0}^{k_{i}-1}\in\fH_{\{0,1,x_{1},\ldots,x_{r}\}}^{1}\coloneqq\bbQ\oplus e_{1}\fH_{\{0,1,x_{1},\ldots,x_{r}\}}\oplus\bigoplus_{i=1}^{r}e_{x_{i}}\fH_{\{0,1,x_{1},\ldots,x_{r}\}}.\]
Since $\fH_{\{0,1,x_{1},\ldots,x_{r}\}}^{1}$ is a commutative $\bbQ$-algebra via the shuffle product, we can equip a commutative $\bbQ$-algebra structure on $\cR$ by introducing $\sh$ through this identification. Namely, we define $a\sh b\coloneqq W^{-1}(W(a)\sh W(b))$ for $a,b\in\cR$.
\begin{theorem}\label{thm_p_shuffle}
  Let $k_{1},\ldots,k_{r},l_{1},\ldots,l_{s}$ be positive integers. Then the equality
  \begin{align}
    &\fmpl_{\hcA,\alpha}\left(\vv{x_{1},\ldots,x_{r}}{k_{1},\ldots,k_{r}}\sh\vv{y_{1},\ldots,y_{s}}{l_{1},\ldots,l_{s}}\right)\\
    &\qquad=(-1)^{l_{1}+\cdots+l_{s}}y_{1}^{\bp}\sum_{f_{1},\ldots,f_{s}\ge 0}\left(\prod_{i=1}^{s}\binom{l_{i}+f_{i}-1}{f_{i}}\right)\fmpl_{\hcA,\alpha}\vv{x_{1}/y_{1},\ldots,x_{r}/y_{1},1/y_{1},y_{s}/y_{1},\ldots,y_{2}/y_{1}}{k_{1},\ldots,k_{r},l_{s}+f_{s},\ldots,l_{1}+f_{1}}\bp^{f_{1}+\cdots+f_{s}}
\end{align} 
holds in $\hcA^{\cP(N;\alpha)}_{\bbZ[\bx,y_{1},y_{1}^{-1},y_{2},\ldots,y_{s}]}$.
\end{theorem}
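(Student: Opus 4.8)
The plan is to reinterpret the shuffle on $\cR$ through a grafted tree and then move its root. Let $X_{\bx}$ (resp.\ $X_{\by}$) be the linear tree carrying $\fmpl_{\hcA,\alpha}\vv{x_{1},\ldots,x_{r}}{k_{1},\ldots,k_{r}}$ (resp.\ $\fmpl_{\hcA,\alpha}\vv{y_{1},\ldots,y_{s}}{l_{1},\ldots,l_{s}}$) as in Example~\ref{ex_linear}, and let $Y$ be obtained by grafting $X_{\bx}$ and $X_{\by}$ along their roots into a cluster vertex coloured $0$, which is attached to a fresh colour-$1$ root by an edge of index $0$. The only branched vertex of $Y$ is the cluster vertex, so Definition~\ref{word_with_p_tree} gives $w_{(Y;\bk)}=(\tilde w_{(X_{\bx};\bk)}\sh\tilde w_{(X_{\by};\bl)})\,e_{1}$, which is exactly the word $W\bigl(\vv{\bx}{\bk}\sh\vv{\by}{\bl}\bigr)$ with the trailing root letter $e_{1}$ appended. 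Applying Theorem~\ref{thm_shuffle_theorem} to $Y$ and to each $X_{\ast}$, differentiating along $\dch_{0,z}$ as in \eqref{eq_ov_mpl}, and using $\bbQ$-linearity of $\fmpl_{\hcA,\alpha}$, one gets
\[
\fmpl_{\hcA,\alpha}\Bigl(\vv{x_{1},\ldots,x_{r}}{k_{1},\ldots,k_{r}}\sh\vv{y_{1},\ldots,y_{s}}{l_{1},\ldots,l_{s}}\Bigr)=\fmpl_{\hcA,\alpha}(Y;\bk).
\]

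\textbf{Changing the root, and the principal term.}
First contract the colour-$0$ cluster vertex of $Y$ by Proposition~\ref{prop_p_contraction_1} (the edge from it toward the root has index $0$); this turns $Y$ into the ``two-armed path'' $Y_{0}$, namely the linear chain with arguments $x_{1},\ldots,x_{r},1,y_{s},\ldots,y_{1}$ and indices $k_{1},\ldots,k_{r},l_{s},\ldots,l_{1}$, now rooted at its interior colour-$1$ vertex. Next apply Proposition~\ref{prop_root_change} to move the root of $Y_{0}$ to the extremity $w_{1}$ of the $\by$-arm (after first contracting so that all vertices on the re-rooting path carry nonzero colours). The re-rooting path has total index $l_{1}+\cdots+l_{s}$, which produces the factor $(-1)^{l_{1}+\cdots+l_{s}}$; the re-rooted tree is already linear, but its root now carries a colour governed by $y_{1}$, so the rescaling identity recorded after Definition~\ref{def_p_fmpl} (pass the root colour out as $y_{1}^{\bp}$ and divide every remaining argument by $y_{1}$) identifies the principal term, up to this sign, with the $f_{1}=\cdots=f_{s}=0$ summand of the right-hand side of Theorem~\ref{thm_p_shuffle}. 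Tracking the chain order through the contraction and the re-rooting shows the arguments and indices match on the nose.

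\textbf{The correction terms and the binomial sum.}
Proposition~\ref{prop_root_change} also contributes a sum of products $I_{\dch_{0,z}}(Y_{j};\bl^{(j)})\,I_{\dch_{0,z}}(Z_{j};\bh^{(j)})$. Writing each factor through its own $\fmpl$-expansion and extracting the coefficient of $z^{M-1}$ in $\frac{d}{dz}$ of the product replaces a term $\fmpl^{(Y_{j})}_{<a}\fmpl^{(Z_{j})}_{<b}$ with $a+b=M$ by $\bigl(\tfrac1a+\tfrac1b\bigr)\fmpl^{(Y_{j})}_{<a}\fmpl^{(Z_{j})}_{<b}=\tfrac{M}{ab}\fmpl^{(Y_{j})}_{<a}\fmpl^{(Z_{j})}_{<b}$, so at $M=p$ each such contribution carries an overall factor $p$ and vanishes modulo $\bp$ — but not modulo higher powers of $\bp$, which is exactly why the right-hand side is an infinite $\bp$-adic series. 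Carrying out the root change of Step~2 one edge at a time, splitting each index-$l_{j}$ edge into $l_{j}$ unit edges via Proposition~\ref{prop_p_contraction_2}, and resumming all the resulting corrections, one recovers the geometric expansion $(n-\bp)^{-l_{j}}=\bigl(\sum_{g\ge0}\bp^{g}n^{-g-1}\bigr)^{l_{j}}=\sum_{f_{j}\ge0}\binom{l_{j}+f_{j}-1}{f_{j}}\bp^{f_{j}}n^{-l_{j}-f_{j}}$ for each block — equivalently, reversing the summation variables $n\mapsto p-n$ in the $\by$-block of the underlying multiple harmonic sum — and this turns the $f=0$ principal term of Step~2 into the full sum $\sum_{f_{1},\ldots,f_{s}\ge0}\prod_{i}\binom{l_{i}+f_{i}-1}{f_{i}}(\cdots)\,\bp^{f_{1}+\cdots+f_{s}}$. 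Combining Steps~2 and~3 yields the stated identity; the case $\cF=\cS$ of Theorem~\ref{thm_t_shuffle} is parallel, using the $t$-adic change-of-root formula in place of the $\bp$-adic one. The hard part is Step~3: controlling the correction terms of Proposition~\ref{prop_root_change} $\bp$-adically and checking that their resummation reproduces precisely the binomial weights $\binom{l_{i}+f_{i}-1}{f_{i}}$, a refinement of the purely mod-$p$ change-of-root computation that suffices for $\zeta_{\cA}$.
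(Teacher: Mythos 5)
Your proposal follows essentially the same route as the paper's proof: the same two-armed tree (the paper's $X$ is your $Y$ after contracting the index-$0$ cluster edge via Proposition \ref{prop_p_contraction_1}), Theorem \ref{thm_shuffle_theorem} to identify its integral with the formal shuffle, the explicit change of root to $w_{1}$ from Proposition \ref{prop_root_change} with correction terms indexed by the edge $d_{i}$ and the splitting position $j$ inside its index $l_{i}$, the overall factor $p$ arising when the coefficient at $z^{p-1}$ of the derivative of a product is extracted, and the reversal $n\mapsto p-n$ with the geometric expansion producing the weights $\binom{l+f-1}{f}\bp^{f}$. The one step you leave asserted --- the resummation over $j$, carried out in the paper via $\sum_{j=0}^{l_{i}-1}\binom{l_{i}-j+f_{i}-1}{f_{i}}=\binom{l_{i}+f_{i}}{f_{i}+1}$, the sign cancellation, and the shift $f_{i}+1\to f_{i}$ that merges the corrections with the principal term --- is exactly the paper's concluding computation, so your outline is correct and matches the published argument.
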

\begin{proof}
  Consider the $\{0,1,x_{1}^{-1},\ldots,x_{r}^{-1},y_{1}^{-1},\ldots,y_{s}^{-1}\}$-colored rooted tree $X$ and the index $\bk$ on $X$ consisting of the following data:
\begin{align}
  X&=(V,E,\rt,C),\\
  V&=\{v_{1},\ldots,v_{r},w_{1},\ldots,w_{s},\rt\},\\
  E&=\{e_{i}=\{v_{i},v_{i+1}\}\mid i=1,\ldots,r~(v_{r+1}\coloneqq\rt)\}\cup\{d_{i}=\{w_{i},w_{i+1}\}\mid i=1,\ldots,s~(w_{s+1}\coloneqq\rt)\},\\
  C(v)&=\begin{cases}x_{i}^{-1} & \text{if }v=v_{i},~i=1,\ldots,r,\\y_{i}^{-1} & \text{if }v=w_{i},~i=1,\ldots,s,\\ 1 & \text{if }v=\rt,\end{cases}
\end{align}
and
  \[\bk(e)=\begin{cases}k_{i} & \text{if }e=e_{i},\\ l_{i} & \text{if }e=d_{i}.\end{cases}\]
Theorem \ref{thm_shuffle_theorem} and expanding the power series show
\begin{equation}\label{eq_p_shuffle_1}
  I_{\dch_{0,z}}(X;\bk)=(-1)^{r+s+1}\int_{0}^{z}\left(\sum_{M=1}^{\infty}\fmpl_{<M}\left(\vv{x_{1},\ldots,x_{r}}{k_{1},\ldots,k_{r}}\sh\vv{y_{1},\ldots,y_{s}}{l_{1},\ldots,l_{s}}\right)z^{M-1}\right)\,dz.
\end{equation}
Write $X'=(V,E,w_{1},C)$. Running the algorithm showed in the proof of Proposition \ref{prop_root_change} explicitly and use Proposition \ref{prop_p_contraction_2}, we obtain
\[I_{\dch_{0,z}}(X;\bk)=(-1)^{l_{1}+\cdots+l_{s}}I_{\dch_{0,z}}(X';\bk)+\sum_{i=1}^{s}\sum_{j=0}^{l_{i}-1}(-1)^{l_{i+1}+\cdots+l_{s}+j}I_{\dch_{0,z}}(Y_{i,j};\bl_{i,j})I_{\dch_{0,z}}(Z_{i,j};\bh_{i,j})\]
with
\begin{align}
  Y_{i,j}&=(V_{i,j},E_{i,j},w',C_{i,j}),\\
  V_{i,j}&=\begin{cases}\{v_{1},\ldots,v_{r},w_{i+1},\ldots,w_{s},w',\rt\} & \text{if }j>0,\\ \{v_{1},\ldots,v_{r},w'=w_{i+1},w_{i+2},\ldots,w_{s},\rt\} & \text{if }j=0,\end{cases}\\
  E_{i,j}&=\begin{cases}\{e_{1},\ldots,e_{r},d_{i+1},\ldots,d_{s},d'\coloneqq\{w',w_{i+1}\}\} & \text{if }j>0,\\\{e_{1},\ldots,e_{r},d_{i+1},\ldots,d_{s}\} & \text{if }j=0,\end{cases}\\
  C_{i,j}(v)&=\begin{cases}0 & \text{if }j>0,~v=w',\\ C(v) & \text{otherwise},\end{cases}\\
  \bl_{i,j}(e)&=\begin{cases}j & \text{if }j>0,~e=d',\\ \bk(e) & \text{otherwise},\end{cases}
\end{align}
and
\begin{align}
  Z_{i,j}&=(V'_{i,j},E'_{i,j},w'',C'_{i,j}),\\
  V'_{i,j}&=\begin{cases}\{w_{1},\ldots,w_{i},w''\} & \text{if }j<l_{i}-1,\\ \{w_{1},\ldots,w_{i-1},w''=w_{i}\} & \text{if }j=l_{i}-1,\end{cases}\\
  E'_{i,j}&=\begin{cases}\{d_{1},\ldots,d_{i-1},d''\coloneqq\{w_{i},w''\}\} & \text{if }j<l_{i}-1,\\\{d_{1},\ldots,d_{i-1}\} & \text{if }j=l_{i}-1,\end{cases}\\
  C'_{i,j}(v)&=\begin{cases}0 & \text{if }j<l_{i}-1,~v=w'',\\ C(v) & \text{otherwise},\end{cases}\\
  \bh_{i,j}(e)&=\begin{cases}l_{i}-1-j & \text{if }j<l_{i}-1,~e=d'',\\ \bk(e) & \text{otherwise}.\end{cases}
\end{align}
From these data, computing the associated integrals similarly to Example \ref{ex_linear}, we have
\begin{align}
  I_{\dch_{0,z}}(Y_{i,j};\bl_{i,j})&=(-1)^{r+s+1-i}\sum_{0<m_{1}<\cdots<m_{r}<n_{s}<\cdots<n_{i}}\frac{x_{1}^{m_{1}}x_{2}^{m_{2}-m_{1}}\cdots x_{r}^{m_{r}-m_{r-1}}x_{r+1}^{n_{s}-m_{r}}y_{s}^{n_{s-1}-n_{s}}\cdots y_{i+1}^{n_{i}-n_{i+1}}}{m_{1}^{k_{1}}\cdots m_{r}^{k_{r}}n_{s}^{l_{s}}\cdots n_{i+1}^{l_{i+1}}n_{i}^{j+1}}z^{n_{i}},\\
  I_{\dch_{0,z}}(Z_{i,j};\bh_{i,j})&=(-1)^{i}\sum_{0<n_{1}<\cdots<n_{i}}\frac{y_{1}^{n_{1}}y_{2}^{n_{2}-n_{1}}\cdots y_{i}^{n_{i}-n_{i-1}}}{n_{1}^{l_{1}}\cdots n_{i-1}^{l_{i-1}}n_{i}^{l_{i}-j}}z^{n_{i}}
\end{align}
and
  \[I_{\dch_{0,z}}(X';\bk)=(-1)^{r+s+1}\int_{0}^{z}\left(\sum_{M=1}^{\infty}y_{1}^{M}\fmpl_{<M}\left(\vv{x_{1}/y_{1},\ldots,x_{r}/y_{1},1/y_{1},y_{s}/y_{1},\ldots,y_{2}/y_{1}}{k_{1},\ldots,k_{r},l_{s},\ldots,l_{1}}\right)z^{M-1}\right)\,dz.\]
Then combining them and \eqref{eq_p_shuffle_1} shows
\begin{align}
  &\fmpl_{<p}\left(\vv{x_{1},\ldots,x_{r}}{k_{1},\ldots,k_{r}}\sh\vv{y_{1},\ldots,y_{s}}{l_{1},\ldots,l_{s}}\right)\\
  &=(-1)^{l_{1}+\cdots+l_{s}}y_{1}^{p}\fmpl_{<p}\vv{x_{1}/y_{1},\ldots,x_{r}/y_{1},1/y_{1},y_{s}/y_{1},\ldots,y_{2}/y_{1}}{k_{1},\ldots,k_{r},l_{s},\ldots,l_{1}}\\
  &\qquad+p\sum_{i=1}^{s}\sum_{j=0}^{l_{i}-1}(-1)^{l_{i+1}+\cdots+l_{s}+j}\sum_{N=1}^{p-1}\fmpl_{=N}\vv{x_{1},\ldots,x_{r},1,y_{s},\ldots,y_{i+1}}{k_{1},\ldots,k_{r},l_{s},\ldots,l_{i+1},j+1}\fmpl_{=p-N}\vv{y_{1},\ldots,y_{i}}{l_{1},\ldots,l_{i-1},l_{i}-j}
\end{align}
for a prime number $p$, where we put $\fmpl_{=M}\coloneqq \fmpl_{<M+1}-\fmpl_{<M}$. Then using $y_{1}=1$ and the $p$-adic expansion
\begin{align}
  &\fmpl_{=p-N}\vv{y_{1},\ldots,y_{i}}{l_{1},\ldots,l_{i-1},l_{i}-j}\\
  &=\sum_{0<n_{1}<\cdots<n_{i-1}<p-N}\frac{y_{1}^{n_{1}}y_{2}^{n_{2}-n_{1}}\cdots y_{i}^{p-N-n_{i-1}}}{n_{1}^{l_{1}}\cdots n_{i-1}^{l_{i-1}}(p-N)^{l_{i}-j}}\\
  &=\sum_{f_{1},\ldots,f_{i}\ge 0}\sum_{N<n_{1}<\cdots<n_{i-1}<p}(-1)^{l_{1}+\cdots+l_{i-1}+l_{i}-j}\\
  &\qquad\cdot\binom{l_{1}+f_{1}-1}{f_{1}}\cdots\binom{l_{i-1}+f_{i-1}-1}{f_{i-1}}\binom{l_{i}-j+f_{i}-1}{f_{i}}\frac{y_{i}^{n_{1}-N}y_{i-1}^{n_{2}-n_{1}}\cdots y_{2}^{n_{i-1}-n_{i-2}}y_{1}^{p-n_{i-1}}}{N^{l_{i}-j+f_{i}}n_{1}^{l_{i-1}+f_{i-1}}\cdots n_{i-1}^{l_{1}+f_{1}}}p^{f_{1}+\cdots+f_{i}},
\end{align}
we obtain
\begin{align}
  &\fmpl_{<p}\left(\vv{x_{1},\ldots,x_{r}}{k_{1},\ldots,k_{r}}\sh\vv{y_{1},\ldots,y_{s}}{l_{1},\ldots,l_{s}}\right)\\
  &=(-1)^{l_{1}+\cdots+l_{s}}y_{1}^{p}\fmpl_{<p}\vv{x_{1}/y_{1},\ldots,x_{r}/y_{1},1/y_{1},y_{s}/y_{1},\ldots,y_{2}/y_{1}}{k_{1},\ldots,k_{r},l_{s},\ldots,l_{1}}\\
  &\qquad+(-1)^{l_{1}+\cdots+l_{s}}\sum_{i=1}^{s}\sum_{j=0}^{l_{i}-1}\sum_{f_{1},\ldots,f_{i}\ge 0}\binom{l_{1}+f_{1}-1}{f_{1}}\cdots\binom{l_{i-1}+f_{i-1}-1}{f_{i-1}}\binom{l_{i}-j+f_{i}-1}{f_{i}}\\
  &\qquad\cdot y_{1}^{p}\fmpl_{<p}\vv{x_{1}/y_{1},\ldots,x_{r}/y_{1},1/y_{1},y_{s}/y_{1},\ldots,y_{2}/y_{1}}{k_{1},\ldots,k_{r},l_{s},\ldots,l_{i+1},l_{i}+f_{i}+1,l_{i-1}+f_{i-1},\ldots,l_{1}+f_{1}}p^{f_{1}+\cdots+f_{i}+1}\\
  &=(-1)^{l_{1}+\cdots+l_{s}}y_{1}^{p}\fmpl_{<p}\vv{x_{1}/y_{1},\ldots,x_{r}/y_{1},1/y_{1},y_{s},\ldots,y_{2}}{k_{1},\ldots,k_{r},l_{s},\ldots,l_{1}}\\
  &\qquad+(-1)^{l_{1}+\cdots+l_{s}}\sum_{i=1}^{s}\sum_{f_{1},\ldots,f_{i}\ge 0}\binom{l_{1}+f_{1}-1}{f_{1}}\cdots\binom{l_{i-1}+f_{i-1}-1}{f_{i-1}}\binom{l_{i}+f_{i}}{f_{i}+1}\\
  &\qquad\cdot\fmpl_{<p}\vv{x_{1}/y_{1},\ldots,x_{r}/y_{1},1/y_{1},y_{s}/y_{1},\ldots,y_{2}/y_{1}}{k_{1},\ldots,k_{r},l_{s},\ldots,l_{i+1},l_{i}+f_{i}+1,l_{i-1}+f_{i-1},\ldots,l_{1}+f_{1}}p^{f_{1}+\cdots+f_{i}+1}\\
  &=(-1)^{l_{1}+\cdots+l_{s}}y_{1}^{p}\fmpl_{<p}\vv{x_{1}/y_{1},\ldots,x_{r}/y_{1},1/y_{1},y_{s}/y_{1},\ldots,y_{2}/y_{1}}{k_{1},\ldots,k_{r},l_{s},\ldots,l_{1}}\\
  &\qquad+(-1)^{l_{1}+\cdots+l_{s}}\sum_{\substack{f_{1},\ldots,f_{s}\ge 0\\ f_{1}+\cdots+f_{s}\ge 1}}\left(\prod_{i=1}^{s}\binom{l_{i}+f_{i}-1}{f_{i}}\right)y_{1}^{p}\fmpl_{<p}\binom{x_{1}/y_{1},\ldots,x_{r}/y_{1},1/y_{1},y_{s}/y_{1},\ldots,y_{2}/y_{1}}{k_{1},\ldots,k_{r},l_{s}+f_{s},\ldots,l_{1}+f_{1}}p^{f_{1}+\cdots+f_{s}}\\
  &=(-1)^{l_{1}+\cdots+l_{s}}y_{1}^{p}\sum_{f_{1},\ldots,f_{s}\ge 0}\left(\prod_{i=1}^{s}\binom{l_{i}+f_{i}-1}{f_{i}}\right)\fmpl_{<p}\binom{x_{1}/y_{1},\ldots,x_{r}/y_{1},1/y_{1},y_{s}/y_{1},\ldots,y_{2}/y_{1}}{k_{1},\ldots,k_{r},l_{s}+f_{s},\ldots,l_{1}+f_{1}}p^{f_{1}+\cdots+f_{s}}.
\end{align}
\end{proof}
Denote by $\hcA(N;\alpha)$ (resp.~$\cA_{n}(N;\alpha)$) the ring $\hcA_{R}^{\Sigma}$ (resp.~$\cA_{n,R}^{\Sigma}$) in case $R=\bbZ[X]/(X^{N}-1)$ and $\Sigma=\cP(N;\alpha)$. We write the natural projection as $\pi_{N}\coloneqq\pi^{\cP(N;\alpha)}_{\bbZ[X]/(X^{N}-1)}$ and the element $\pi_{N}((p)_{p\in\cP(N;\alpha)})$ as $\bp$ by abuse of notation. The ring $\hcA(N;\alpha)$ is $\bp$-adically complete.
\begin{remark}\label{rem_induce}
  We also easily see that, when once we determine a map $f\colon\{x_{1},\ldots,x_{r}\}\to\mu_{N}$, the continuous ring homomorphism $f\colon\hcA_{\bbZ[\bx]}^{\cP(N;\alpha)}\to\hcA(N;\alpha)$ is induced by the definition of the projective limit.
\end{remark}
\begin{definition}\label{def_p_fmlv}
  For $\vv{\etab}{\bk}=\vv{\eta_{1},\ldots,\eta_{r}}{k_{1},\ldots,k_{r}}\in\mu_{N}^{r}\times\bbZ_{\ge 1}^{r}$, we put
  \[L_{<M}\vv{\etab}{\bk}\coloneqq\sum_{0<n_{1}<\cdots<n_{r}<M}\frac{\eta_{1}^{n_{1}}\eta_{2}^{n_{2}-n_{1}}\cdots\eta_{r}^{n_{r}-n_{r-1}}}{n_{1}^{k_{1}}\cdots n_{r}^{k_{r}}}\]
  and define the \emph{$\bp$-adic finite multiple $L$-value} as
  \[L_{\hcA,\alpha}\vv{\etab}{\bk}\coloneqq\pi_{N}\left(\left(L_{<p}\vv{\etab}{\bk}\right)_{p\in\cP(N;\alpha)}\right)\in\hcA(N;\alpha).\]
\end{definition}
\begin{remark}
  The image of $L_{\hcA,\alpha}\vv{\etab}{\bk}$ under the natural projection $\hcA(N;\alpha)\to\cA_{1}(N;\alpha)\simeq\hcA(N;\alpha)/\bp\hcA(N;\alpha)$ is essentially Tasaka's finite colored multiple zeta value $L^{\cA}_{\alpha}\vv{\etab}{\bk}$ (\cite[Definition 4.1]{tasaka21}) with an appropriate adjusting of arguments.
\end{remark}
For a positive integer $N$, we define
\[\cR_{N}\coloneqq\mathrm{span}_{\bbQ}\left(\bigcup_{r=0}^{\infty}(\mu_{N}^{r}\times\bbZ_{\ge 1}^{r})\right).\]
\begin{corollary}\label{cor_p_shuffle}
  Let $k_{1},\ldots,k_{r},l_{1},\ldots,l_{s}$ be positive integers and $\eta_{1},\ldots,\eta_{r},\xi_{1},\ldots,\xi_{s}$ elements of $\mu_{N}$. Then we have  
  \begin{align}
    &L_{\hcA,\alpha}\left(\vv{\eta_{1},\ldots,\eta_{r}}{k_{1},\ldots,k_{r}}\sh\vv{\xi_{1},\ldots,\xi_{s}}{l_{1},\ldots,l_{s}}\right)\\
    &\qquad=(-1)^{l_{1}+\cdots+l_{s}}\xi_{1}^{\bp}\sum_{f_{1},\ldots,f_{s}\ge 0}\left(\prod_{i=1}^{s}\binom{l_{i}+f_{i}-1}{f_{i}}\right)L_{\hcA,\alpha}\vv{\eta_{1}/\xi_{1},\ldots,\eta_{r}/\xi_{1},1/\xi_{1},\xi_{s}/\xi_{1},\ldots,\xi_{2}/\xi_{1}}{k_{1},\ldots,k_{r},l_{s}+f_{s},\ldots,l_{1}+f_{1}}\bp^{f_{1}+\cdots+f_{s}}.
  \end{align}
  Here we extend $L_{\hcA,\alpha}$ to $\cR_{N}$.
\end{corollary}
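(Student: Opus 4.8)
The plan is to deduce the corollary from Theorem \ref{thm_p_shuffle} by specializing the indeterminates to roots of unity. First I would fix the ring homomorphism $f\colon\bbZ[\bx,y_{1},y_{1}^{-1},y_{2},\ldots,y_{s}]\to\bbZ[X]/(X^{N}-1)$ determined by $x_{i}\mapsto\eta_{i}$ $(1\le i\le r)$ and $y_{j}\mapsto\xi_{j}$ $(1\le j\le s)$; this is well defined because each $\xi_{j}\in\mu_{N}$ is a unit (with inverse $\xi_{j}^{N-1}$), so $y_{1}^{-1}$ may be sent to $\xi_{1}^{-1}$. Exactly as in Remark \ref{rem_induce}, the universal property of the projective limit then yields an induced continuous ring homomorphism
\[f\colon\hcA_{\bbZ[\bx,y_{1},y_{1}^{-1},y_{2},\ldots,y_{s}]}^{\cP(N;\alpha)}\longrightarrow\hcA(N;\alpha),\]
which sends $\bp$ to $\bp$ and hence $y_{1}^{\bp}$ to $\xi_{1}^{\bp}$.

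Next I would check that $f$ intertwines $\fmpl_{\hcA,\alpha}$ with $L_{\hcA,\alpha}$. Since $f$ acts componentwise on the products defining the adelic rings and commutes with the natural projections $\pi_{\bbZ[\bx,y_{1},y_{1}^{-1},y_{2},\ldots,y_{s}]}^{\cP(N;\alpha)}$ and $\pi_{N}$, it is enough to observe that for every prime $p$ the polynomial $\fmpl_{<p}$ of a tuple of arguments drawn from $\{1,x_{1},\ldots,x_{r}\}$ divided by $y_{1}$ (resp.\ from $\{y_{1},\ldots,y_{s}\}$) is carried by $f$ to $L_{<p}$ of the corresponding tuple of elements of $\mu_{N}$ --- all the ratios $\eta_{i}/\xi_{1}$, $1/\xi_{1}$, $\xi_{j}/\xi_{1}$ again lying in $\mu_{N}$. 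This is immediate on comparing the series in Definition \ref{def_p_fmpl} (and the example following it) with Definition \ref{def_p_fmlv}. I would then note that $f$ respects the shuffle product: under the identification $W$ the map induced by $e_{a}\mapsto e_{f(a)}$ is an algebra homomorphism for $\sh$ by Definition \ref{def_general_shuffle}, so it sends $\vv{x_{1},\ldots,x_{r}}{k_{1},\ldots,k_{r}}\sh\vv{y_{1},\ldots,y_{s}}{l_{1},\ldots,l_{s}}$ to $\vv{\eta_{1},\ldots,\eta_{r}}{k_{1},\ldots,k_{r}}\sh\vv{\xi_{1},\ldots,\xi_{s}}{l_{1},\ldots,l_{s}}$; applying $L_{\hcA,\alpha}$ then produces the left-hand side of the corollary.

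Finally I would apply $f$ to the identity of Theorem \ref{thm_p_shuffle}. Because $f$ is continuous and the right-hand side converges $\bp$-adically thanks to the factor $\bp^{f_{1}+\cdots+f_{s}}$, the image of that series is the termwise image; each $\fmpl_{\hcA,\alpha}\vv{x_{1}/y_{1},\ldots,y_{2}/y_{1}}{k_{1},\ldots,l_{1}+f_{1}}$ becomes $L_{\hcA,\alpha}\vv{\eta_{1}/\xi_{1},\ldots,\xi_{2}/\xi_{1}}{k_{1},\ldots,l_{1}+f_{1}}$, the binomial coefficients and the powers of $\bp$ are untouched, and $y_{1}^{\bp}$ becomes $\xi_{1}^{\bp}$, giving exactly the asserted equality in $\hcA(N;\alpha)$. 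I expect the only real point of care to be the well-definedness and continuity of the specialization on the adelic ring over $\bbZ[\bx,y_{1},y_{1}^{-1},y_{2},\ldots,y_{s}]$ --- so that $f$ commutes with the infinite $\bp$-adic sum and with the projections --- which is exactly what Remark \ref{rem_induce} together with the $\bp$-adic completeness of $\hcA(N;\alpha)$ provides; everything else is bookkeeping.
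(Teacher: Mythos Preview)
Your proposal is correct and follows exactly the paper's own approach: define the specialization $f$ sending $x_i\mapsto\eta_i$, $y_j\mapsto\xi_j$, invoke Remark~\ref{rem_induce} to obtain the induced continuous ring homomorphism between the adelic rings, and apply it to the identity of Theorem~\ref{thm_p_shuffle}. The paper's proof is stated in two sentences, while you have spelled out the routine verifications (compatibility with $\sh$, with the projections, and with the $\bp$-adic sum) that the paper leaves implicit.
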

\begin{proof}
  Determine $f\colon\{x_{1},\ldots,x_{r},y_{1},\ldots,y_{s}\}\to\mu_{N}$ by $f(x_{i})=\eta_{i}$ for $i=1,\ldots,r$ and $f(y_{j})=\xi_{j}$ for $j=1,\ldots,s$. Then we get this corollary by applying the induced map $f$ in the sense of Remark \ref{rem_induce} to Theorem \ref{thm_p_shuffle}.
\end{proof}
The following corollary is immediate by putting $N=1$ and $r=0$ in Theorem \ref{thm_p_shuffle}.
\begin{corollary}[{\cite[Theorem 3.1]{seki19}}]
  Let $k_{1},\ldots,k_{r}$ be positive integers. Then the equality
  \begin{align}
    \fmpl_{\hcA,0}\vv{x_{1},\ldots,x_{r}}{k_{1},\ldots,k_{r}}=(-1)^{k_{1}+\cdots+k_{r}}x_{1}^{\bp}\sum_{f_{1},\ldots,f_{r}\ge 0}\left(\prod_{i=1}^{r}\binom{k_{i}+f_{i}-1}{f_{i}}\right)\fmpl_{\hcA,0}\vv{1/x_{1},x_{r}/x_{1},\ldots,x_{2}/x_{1}}{k_{r}+f_{r},\ldots,k_{1}+f_{1}}\bp^{f_{1}+\cdots+f_{r}}
\end{align} 
holds in $\hcA^{\cP(1;0)}_{\bbZ[\bx,\bx^{-1}]}$.
\end{corollary}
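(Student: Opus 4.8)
The plan is to derive this as the degenerate case of Theorem~\ref{thm_p_shuffle} in which $N=1$ and the first argument of the shuffle is the empty tuple. First I would set $N=1$: then $\cP(1;0)=\cP$, the only residue modulo $1$ is $0$, and the coefficient ring $\hcA^{\cP(1;0)}_{\bbZ[\bx,\bx^{-1}]}$ is exactly the one appearing in the statement. Next I would take the length of the first tuple in Theorem~\ref{thm_p_shuffle} to be $0$, so that $\vv{x_{1},\ldots,x_{r}}{k_{1},\ldots,k_{r}}$ becomes the empty tuple; under the bijection $W$ the empty tuple corresponds to $1$, the unit for $\sh$.

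The two sides of Theorem~\ref{thm_p_shuffle} then simplify by inspection. On the left, since $1$ is the shuffle unit one has $1\sh\vv{y_{1},\ldots,y_{s}}{l_{1},\ldots,l_{s}}=\vv{y_{1},\ldots,y_{s}}{l_{1},\ldots,l_{s}}$, so the left-hand side becomes $\fmpl_{\hcA,0}\vv{y_{1},\ldots,y_{s}}{l_{1},\ldots,l_{s}}$. On the right, deleting the now-absent entries $x_{1}/y_{1},\ldots,x_{r}/y_{1}$ together with the exponents $k_{1},\ldots,k_{r}$ leaves precisely
\[(-1)^{l_{1}+\cdots+l_{s}}y_{1}^{\bp}\sum_{f_{1},\ldots,f_{s}\ge 0}\left(\prod_{i=1}^{s}\binom{l_{i}+f_{i}-1}{f_{i}}\right)\fmpl_{\hcA,0}\vv{1/y_{1},y_{s}/y_{1},\ldots,y_{2}/y_{1}}{l_{s}+f_{s},\ldots,l_{1}+f_{1}}\bp^{f_{1}+\cdots+f_{s}}.\]
Renaming the index $s$ as $r$, the indeterminates $y_{i}$ as $x_{i}$ and the positive integers $l_{i}$ as $k_{i}$ turns the resulting equality into the asserted one. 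The identity produced this way a priori holds in $\hcA^{\cP(1;0)}_{\bbZ[x_{1},x_{1}^{-1},x_{2},\ldots,x_{r}]}$; I would then push it forward along the continuous ring homomorphism induced---by the same functoriality of the projective limit recalled in Remark~\ref{rem_induce}---by the inclusion $\bbZ[x_{1},x_{1}^{-1},x_{2},\ldots,x_{r}]\hookrightarrow\bbZ[\bx,\bx^{-1}]$, which places it in $\hcA^{\cP(1;0)}_{\bbZ[\bx,\bx^{-1}]}$ as stated.

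I do not expect any genuine obstacle here: all of the substance---in particular the reversal of a single word and the binomial reindexing through $\binom{l_{i}-j+f_{i}-1}{f_{i}}$ and $\binom{l_{i}+f_{i}}{f_{i}+1}$---is already carried out inside the proof of Theorem~\ref{thm_p_shuffle}, so this corollary needs only the bookkeeping above: recognizing $1$ as the shuffle unit, reading off the specialization to the empty first tuple, and the harmless enlargement of the coefficient ring.
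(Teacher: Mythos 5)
Your proposal is correct and follows exactly the route the paper intends: the paper derives this corollary simply by putting $N=1$ and taking the first tuple in Theorem \ref{thm_p_shuffle} to be empty (its $r=0$), then relabelling, which is precisely your argument. The extra remark about enlarging the coefficient ring from $\bbZ[x_{1},x_{1}^{-1},x_{2},\ldots,x_{r}]$ to $\bbZ[\bx,\bx^{-1}]$ is harmless bookkeeping the paper leaves implicit.
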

\subsection{\texorpdfstring{$t$-adic symmetric multiple polylogarithms}{S-hat symmetric multiple polylogarithms}}
\begin{definition}\label{def_t_smpl}
    For $\bk=(k_{1},\ldots,k_{r})\in\bbZ_{\ge 1}^{r}$, we define the \emph{$t$-adic symmetric multiple polylogarithm} (\emph{$t$-adic SMPL}) as
      \begin{align}
      \fmpl_{\hcS,\alpha}\vv{\bx}{\bk}&\coloneqq\sum_{i=0}^{r}(-1)^{k_{i+1}+\cdots+k_{r}}x_{i+1}^{\alpha}\Li^{\sh}\vv{x_{1}/x_{i+1},\ldots,x_{i}/x_{i+1}}{k_{1},\ldots,k_{i}}\\
        &\qquad\cdot\sum_{f_{i+1},\ldots,f_{r}\ge 0}\left(\prod_{j=i+1}^{r}\binom{k_{j}+f_{j}-1}{f_{j}}\right)\Li^{\sh}\vv{1/x_{i+1},x_{r}/x_{i+1},\ldots,x_{i+2}/x_{i+1}}{k_{r}+f_{r},\ldots,k_{i+1}+f_{i+1}}t^{f_{i+1}+\cdots+f_{r}}\in\bbC\jump{\bx,\bx^{-1}}\jump{t},
      \end{align}
      where we write
      \begin{equation}\label{eq_polylog_ii}
        \Li^{\sh}\vv{x_{1},\ldots,x_{r}}{k_{1},\ldots,k_{r}}\coloneqq\sum_{0<n_{1}<\cdots<n_{r}}\frac{x_{1}^{n_{1}}x_{2}^{n_{2}-n_{1}}\cdots x_{r}^{n_{r}-n_{r-1}}}{n_{1}^{k_{1}}\cdots n_{r}^{k_{r}}}=(-1)^{r}I_{\dch}(\tzero;e_{1/x_{1}}e_{0}^{k_{1}-1}\cdots e_{1/x_{r}}e_{0}^{k_{r}-1};\tone).
      \end{equation}
    This definition gives the $\bbQ$-linear map $\fmpl_{\hcS,\alpha}\colon\cR\to\bbC\jump{\bx,\bx^{-1}}\jump{t}$.
\end{definition}
\begin{definition}\label{def_t_rsmpl}
  Let $X=(V,E,\rt,C)$ be a $\{0,1,x_{1}^{-1},\ldots,x_{r}^{-1}\}$-colored rooted tree and $\bk$ an essentially positive index on $X$. For $u\in\{x_{1},\ldots,x_{r}\}$ and a non-negative integer $n$, using a new vertex $w$, we define a $\{0,x_{1},\ldots,x_{r}\}\cup\{x_{i}x_{j}^{-1}\mid i,j\in\{1,\ldots,r\}\}$-colored pair $(X'_{u};\bk_{n})$ by
    \begin{align}
      C'_{u}(v)&=\begin{cases}0 & \text{if }v=w,\\ C(v)u & \text{otherwise},\end{cases}\\
      X'_{u}&=(V\cup\{w\},E\cup\{\{\rt,w\}\},w,C'_{u}),\\
      \bk_{n}(e)&=\begin{cases}n & \text{if }e=\{\rt,w\},\\ \bk(e) & \text{otherwise}.\end{cases}
    \end{align}
    Then we define the \emph{$t$-adic refined symmetric multiple polylogarithm associated with $X$} as
    \[\fmpl_{\hcRS,\alpha}(X;\bk)\coloneqq\frac{(-1)^{|V|-|C^{-1}(\{0\})|-1}}{2\pi i}\sum_{n=0}^{\infty}\sum_{u\in\{x_{1},\ldots,x_{r}\}}u^{\alpha}I_{\beta}(X'_{u};\bk_{n})t^{n}\in\bbC\jump{\bx,\bx^{-1}}\jump{t}.\]
\end{definition}
\begin{proposition}\label{prop_t_smpl}
  Let $X=(V,E,\rt,C)$ be a $\beta$-admissible $\{0,1,x_{1}^{-1},\ldots,x_{r}^{-1}\}$-colored rooted tree and $\bk$ an essentially positive index on $X$. Then we have the followings.
  \begin{enumerate}
    \item\label{t_smpl_1} The element $\fmpl_{\hcRS,\alpha}(X;\bk)$ is in $\bbQ\jump{\bx,\bx^{-1}}[2\pi i]\jump{t}$.
    \item\label{t_smpl_2} When $r=0$, the value $\fmpl_{\hcRS,\alpha}(X;\bk)$ coincides with $\zeta_{\hcS}(X'';\bk)$ modulo $2\pi i$. Here $X''$ stands for the $2$-colored rooted tree corresponding to $X$.
    \item\label{t_smpl_3} When $X$ is linear as in Example \ref{ex_linear}, the congruence
    \[\fmpl_{\hcRS,\alpha}(X;\bk)\equiv C(\rt)^{-\alpha}\fmpl_{\hcS,\alpha}\vv{C(\rt)/C(v_{1}),\ldots,C(\rt)/C(v_{r})}{k_{1},\ldots,k_{r}}\qquad (\mod~2\pi i)\]
    holds.
    \item\label{t_smpl_4} When $X$ is linear as in Example \ref{ex_linear} and $r=0$, the value $\fmpl_{\hcRS,\alpha}(X;\bk)$ coincides with $\zeta_{\cRS}(k_{1},\ldots,k_{r})$ modulo $t$.
  \end{enumerate}
\end{proposition}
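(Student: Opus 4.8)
The plan is to reduce all four assertions to a single analysis of the loop integral along $\beta=\dch^{-1}\circ c\circ\dch$. For any word $W\in\fH_{\bbC[\bx,\bx^{-1}]}$, two applications of Proposition \ref{prop_ii_properties}\eqref{pathcon} followed by Proposition \ref{prop_ii_properties}\eqref{reversal} give the \emph{master decomposition}
\[I_{\beta}(\tzero;W;\tzero)=\sum_{W=W'W''W'''}(-1)^{|W'''|}\,I_{\dch}(\tzero;W';\tone)\,I_{c}(\tone;W'';\tone)\,I_{\dch}(\tzero;\overleftarrow{W'''};\tone),\]
in which, by \eqref{eq_polylog_ii}, every factor $I_{\dch}(\tzero;-;\tone)$ is, up to sign, a shuffle-regularized multiple polylogarithm $\Li^{\sh}$, and $I_{c}(\tone;W'';\tone)\in\bbQ[2\pi i]$ by the classical monodromy computation for the small counterclockwise loop around $1$ (cf.\ \cite{hirose20}): it has no constant term in $2\pi i$ unless $W''=\emptyset$, and its coefficient of $2\pi i$ is supported on words $W''$ carrying a single distinguished letter $e_{1}$. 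For a general $(X;\bk)$ one first passes to the $\gamma_{z}$-regularization, then reduces $(X'_{u};\bk_{n})$ to a harvestable pair by Proposition \ref{prop_p_representation_algorithm} and rewrites the integral by Theorem \ref{thm_shuffle_theorem}; when $n=0$ the adjoined edge $\{\rt,w\}$ (with $C'_{u}(w)=0$) is eliminated by direct inspection of $\Delta(X'_{u};\bk_{0})$, so that $I_{\beta}(X'_{u};\bk_{0})$ equals $I_{\beta}$ of $X$ with colours scaled by $u$.

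For \eqref{t_smpl_4}, where $X$ is the linear tree of Example \ref{ex_linear} and $r=0$, reducing modulo $t$ keeps only $n=0$, and the contraction just described gives $I_{\beta}(X';\bk_{0})=I_{\beta}(X;\bk)=(-1)^{r}2\pi i\,\zeta_{\cRS}(k_{1},\ldots,k_{r})$ by Example \ref{ex_linear}; since $C^{-1}(\{0\})=\emp$ for a linear tree the sign $(-1)^{|V|-|C^{-1}(\{0\})|-1}$ in Definition \ref{def_t_rsmpl} also equals $(-1)^{r}$, and $u^{\alpha}=1$ in the degenerate case, so everything cancels. For \eqref{t_smpl_1}, apply the master decomposition to $W=w_{((X'_{u})_{\har};(\bk_{n})_{\har})}$: the right-hand side is visibly a $\bbQ\jump{\bx,\bx^{-1}}[2\pi i]$-combination, and the terms with $W''=\emptyset$ reassemble, via Proposition \ref{prop_ii_properties}\eqref{pathcon} and Proposition \ref{prop_ii_properties}\eqref{reversal}, into $I_{\dch^{-1}\circ\dch}(\tzero;W;\tzero)=0$ (homotopy invariance, since $W\neq\emp$); hence $I_{\beta}(X'_{u};\bk_{n})$ is divisible by $2\pi i$, and after dividing by $2\pi i$, multiplying by the monomial $u^{\alpha}$ and summing against $t^{n}$ we obtain $\fmpl_{\hcRS,\alpha}(X;\bk)\in\bbQ\jump{\bx,\bx^{-1}}[2\pi i]\jump{t}$.

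The substantial work is \eqref{t_smpl_3}. For $X$ linear, Theorem \ref{thm_shuffle_theorem} (and the $n=0$ contraction) gives $I_{\beta}(X'_{u};\bk_{n})=I_{\beta}(\tzero;e_{C(v_{1})u}e_{0}^{k_{1}-1}\cdots e_{C(v_{r})u}e_{0}^{k_{r}-1}e_{C(\rt)u}e_{0}^{n};\tzero)$; applying the master decomposition and extracting the coefficient of $2\pi i$, the distinguished letter $e_{1}$ must sit at a position where $C(v_{i})u=1$ (i.e.\ $u=C(v_{i})^{-1}$) or $C(\rt)u=1$, so that the sum $\sum_{u\in\{x_{1},\ldots,x_{r}\}}u^{\alpha}(\cdots)$ runs exactly over the choice of this position. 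For each choice, the left factor $I_{\dch}(\tzero;W';\tone)$ is a $\Li^{\sh}$ in the ratios $C(v_{j})/C(v_{i})$; the reversed right factor $(-1)^{|W'''|}I_{\dch}(\tzero;\overleftarrow{W'''};\tone)$, whose word begins with the tail $e_{0}^{n}$, expands through shuffle-regularization into the binomial sums $\sum_{f_{i+1},\ldots,f_{r}\ge 0}\bigl(\prod_{j>i}\binom{k_{j}+f_{j}-1}{f_{j}}\bigr)t^{f_{i+1}+\cdots+f_{r}}\Li^{\sh}(\cdots)$ appearing in Definition \ref{def_t_smpl}; and the weight $u^{\alpha}=C(v_{i})^{-\alpha}$ produces the prefactors $x_{i+1}^{\alpha}$ of that definition together with the overall factor $C(\rt)^{-\alpha}$. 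Matching term by term with Definition \ref{def_t_smpl} yields the congruence. This is Hirose's proof of Theorem \ref{thm_rsmzv} upgraded to the coloured, $t$-adic setting, and the delicate point --- the principal obstacle of the whole proposition --- is precisely making the monodromy computation explicit enough to reproduce the exact binomial combinatorics of $\fmpl_{\hcS,\alpha}$ after the reversal and the $u$-summation.

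Finally, for \eqref{t_smpl_2} take $r=0$, so $X$ is $\beta$-admissible $\{0,1\}$-coloured with $C(\rt)=1$, $\bk$ essentially positive, with corresponding $2$-coloured tree $X''$. The index $\bk_{n}$ on $X'_{1}$ is again essentially positive, and the harvestable reduction of $(X'_{1};\bk_{n})$ uses exactly the contractions of Propositions \ref{prop_p_contraction_1} and \ref{prop_p_contraction_2} --- which mirror \cite[Propositions 2.2, 2.3]{ono17} --- applied to the harvestable reduction $(X''_{\h};\bk_{\h})$ of $(X'';\bk)$, with the root unchanged; by the remark following Definition \ref{word_with_p_tree} the resulting word is $O_{(X''_{\h};\bk_{\h})}\,e_{1}e_{0}^{n}$. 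Running the computation of \eqref{t_smpl_3} with all colours $1$ on each linear constituent of $O_{(X''_{\h};\bk_{\h})}$ and propagating through the shuffle products --- Proposition \ref{prop_ii_properties}\eqref{shuffle} on one side, the known shuffle relation for $\zeta_{\hcS}$ on the other --- identifies $\fmpl_{\hcRS,\alpha}(X;\bk)$, modulo $2\pi i$, with $Z_{\hcS}(O_{(X''_{\h};\bk_{\h})})$, which by Propositions \ref{prop_2_word} and \ref{prop_2_algorithm} equals $\zeta_{\hcS}(X''_{\h};\bk_{\h})=\zeta_{\hcS}(X'';\bk)$.
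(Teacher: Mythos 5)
Your proposal is correct and follows essentially the same route as the paper: decompose $I_{\beta}$ along $\beta=\dch^{-1}\circ c\circ\dch$ via Proposition \ref{prop_ii_properties} \eqref{pathcon} and \eqref{reversal}, evaluate $I_{c}$ (powers of $2\pi i$ on all-$e_{1}$ subwords, with the trivial-loop terms cancelling to give divisibility by $2\pi i$), extract the $2\pi i$-coefficient and the leading-$e_{0}^{n}$ binomial expansion to get \eqref{t_smpl_3} and \eqref{t_smpl_1}, reduce \eqref{t_smpl_2} to linear words through $w_{(X'_{1,\har};(\bk_{n})_{\har})}=O_{(X''_{\h};\bk_{\h})}e_{1}e_{0}^{n}$ and the word machinery, and prove \eqref{t_smpl_4} by the computation of Example \ref{ex_linear}. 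The paper merely carries out the $u$-summation and binomial bookkeeping explicitly where you sketch them, so there is no substantive difference.
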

\begin{proof}
  The claim \eqref{t_smpl_4} is proved by a similar computation to Example \ref{ex_linear}. We prove \eqref{t_smpl_1} and \eqref{t_smpl_3}. Take the same symbols about $(X;\bk)$ as Example \ref{ex_linear} and determines elements $a_{1},\ldots,a_{l}\in\{0,x_{1},\ldots,x_{r+1}\}\cup\{x_{i}x_{j}^{-1}\mid i,j\in\{1,\ldots,r+1\}\}$ by
  \[e_{a_{1}}\cdots e_{a_{l}}\coloneqq e_{u/x_{1}}e_{0}^{k_{1}-1}\cdots e_{u/x_{r}}e_{0}^{k_{r}-1}e_{u/x_{r+1}}e_{0}^{n},\]
  for $u\in\{x_{1},\ldots,x_{r+1}\}$. By the path composition formula and the reversal formula (Proposition \ref{prop_ii_properties} \eqref{pathcon} and \eqref{reversal}), we have
  \begin{align}
    &I_{\beta}(\tzero;e_{u/x_{1}}e_{0}^{k_{1}-1}\cdots e_{u/x_{r}}e_{0}^{k_{r}-1}e_{u/x_{r+1}}e_{0}^{n};\tzero)\\
    &=\sum_{0\le j\le s\le l}I_{\dch}(\tzero;e_{a_{1}}\cdots e_{a_{j}};\tone)I_{c}(\tone;e_{a_{j+1}}\cdots e_{a_{s}};\tone)I_{\dch^{-1}}(\tone;e_{a_{s+1}}\cdots e_{a_{l}};\tzero)\\
    &=\sum_{0\le j\le s\le l}(-1)^{l-s}I_{\dch}(\tzero;e_{a_{1}}\cdots e_{a_{j}};\tone)I_{c}(\tone;e_{a_{j+1}}\cdots e_{a_{s}};\tone)I_{\dch}(\tzero;e_{a_{l}}\cdots e_{a_{s+1}};\tone).
  \end{align} 
  Since
  \[I_{c}(\tone;e_{a_{j+1}}\cdots e_{a_{s}};\tone)=\begin{cases}\dfrac{(2\pi i)^{s-j}}{(s-j)!} & \text{if }a_{j+1}=\cdots=a_{s}=1,\\ 0 & \text{otherwise},\end{cases}\]
  we obtain
  \begin{align}
    &\sum_{u\in\{x_{1},\ldots,x_{r+1}\}}u^{\alpha}I_{\beta}(\tzero;e_{u/x_{1}}e_{0}^{k_{1}-1}\cdots e_{u/x_{r}}e_{0}^{k_{r}-1}e_{u/x_{r+1}}e_{0}^{n};\tzero)\\
    &=\sum_{u\in\{x_{1},\ldots,x_{r+1}\}}u^{\alpha}\sum_{\substack{0\le j<s\le l\\ a_{j+1}=\cdots=a_{s}=1}}(-1)^{l-s}\frac{(2\pi i)^{s-j}}{(s-j)!}I_{\dch}(\tzero;e_{a_{1}}\cdots e_{a_{j}};\tone)I_{\dch}(\tzero;e_{a_{l}}\cdots e_{a_{s+1}};\tone)\\
    &=\sum_{u\in\{x_{1},\ldots,x_{r+1}\}}u^{\alpha}\sum_{\substack{0\le j\le s\le r\\ k_{j+1}=\cdots=k_{s}=1\\ x_{j+1}=\cdots=x_{s+1}=u}}(-1)^{k_{j+1}+\cdots+k_{r}+n+r-s+j}\frac{(2\pi i)^{s-j+1}}{(s-j+1)!}I_{\dch}(\tzero;e_{u/x_{1}}e_{0}^{k_{1}-1}\cdots e_{u/x_{j}}e_{0}^{k_{j}-1};\tone)\\
    &\qquad\cdot I_{\dch}(\tzero;e_{0}^{n}e_{u/x_{r+1}}e_{0}^{k_{r}-1}e_{u/x_{r}}e_{0}^{k_{r-1}-1}\cdots e_{u/x_{s+2}}e_{0}^{k_{s+1}-1};\tone)\\
    &=(-1)^{r}2\pi i\sum_{\substack{0\le j\le s\le r\\ k_{j+1}=\cdots=k_{s}=1\\ x_{j+1}=\cdots=x_{s+1}}}(-1)^{k_{s+1}+\cdots+k_{r}}x_{j+1}^{\alpha}\frac{(-2\pi i)^{s-j}}{(s-j+1)!}\Li^{\sh}\vv{x_{1}/x_{j+1},\ldots,x_{j}/x_{j+1}}{k_{1},\ldots,k_{j}}\\
    &\qquad\cdot\sum_{\substack{f_{s+1},\ldots,f_{r}\ge 0\\ f_{s+1}+\cdots+f_{r}=n}}\left(\prod_{a=s+1}^{r}\binom{k_{a}+f_{a}-1}{f_{a}}\right)\Li^{\sh}\vv{x_{r+1}/x_{s+1},x_{r}/x_{s+1},\ldots,x_{s+2}/x_{s+1}}{k_{r}+f_{r},\ldots,k_{s+1}+f_{s+1}}.
  \end{align} 
  Note that the term where $j=s$ in the first equality and the terms where $s>r$ in the second equality vanish by the path composition formula. This expression shows \eqref{t_smpl_1}. Moroever, \eqref{t_smpl_3} also follows as
  \begin{align}
    \fmpl_{\hcRS,\alpha}(X;\bk)
    &=\frac{(-1)^{r}}{2\pi i}\sum_{n=0}^{\infty}\sum_{u\in\{x_{1},\ldots,x_{r}\}}u^{\alpha}I_{\beta}(\tzero;e_{u/x_{1}}e_{0}^{k_{1}-1}\cdots e_{u/x_{r}}e_{0}^{k_{r}-1}e_{u/x_{r+1}}e_{0}^{n};\tzero)(-t)^{n}\\
    &=\sum_{\substack{0\le j\le s\le r\\ k_{j+1}=\cdots=k_{s}=1\\ x_{j+1}=\cdots=x_{s+1}}}(-1)^{k_{s+1}+\cdots+k_{r}}x_{j+1}^{\alpha}\frac{(-2\pi i)^{s-j}}{(s-j+1)!}\Li^{\sh}\vv{x_{1}/x_{j+1},\ldots,x_{j}/x_{j+1}}{k_{1},\ldots,k_{j}}\\
    &\qquad\cdot\sum_{f_{k+1},\ldots,f_{s}\ge 0}\left(\prod_{a=k+1}^{s}\binom{l_{a}+f_{a}-1}{f_{a}}\right)\Li^{\sh}\vv{x_{r+1}/x_{s+1},x_{r}/x_{s+1},\ldots,x_{s+2}/x_{s+1}}{l_{s}+f_{s},\ldots,l_{k+1}+f_{k+1}}t^{f_{k+1}+\cdots+f_{s}}\\
    &\equiv\sum_{j=0}^{r}(-1)^{k_{j+1}+\cdots+k_{r}}x_{j+1}^{\alpha}\Li^{\sh}\vv{x_{1}/x_{j+1},\ldots,x_{j}/x_{j+1}}{k_{1},\ldots,k_{j}}\\
    &\qquad\cdot\sum_{f_{j+1},\ldots,f_{r}\ge 0}\left(\prod_{a=j+1}^{r}\binom{k_{a}+f_{a}-1}{f_{a}}\right)\Li^{\sh}\vv{x_{r+1}/x_{j+1},\ldots,x_{j+2}/x_{j+1}}{k_{r}+f_{r},\ldots,k_{j+1}+f_{j+1}}t^{f_{j+1}+\cdots+f_{r}}\qquad (\mod~2\pi i)\\
    &=x_{r+1}^{\alpha}\fmpl_{\hcS,\alpha}\vv{x_{1}/x_{r+1},\ldots,x_{r}/x_{r+1}}{k_{1},\ldots,k_{r}}.
\end{align}
Next we prove \eqref{t_smpl_2}. Take the same symbols as Definition \ref{def_t_rsmpl} and $0<z'<1/2$ such that $\gamma(t)\notin D$ and $\gamma(u)\neq C(\rt)$ for $t\in (0,z')$ and $u\in (1-z',1)$. Then $X'$ is $\beta_{z}$-admissible for $0<z<z'$. Following the algorithm in the proof of Proposition \ref{prop_p_representation_algorithm}, we see that
  \[w_{(X'_{1,\har};(\bk_{n})_{\har})}=w_{(X_{\har};\bk_{\har})}e_{0}^{n}\]
  for $n\ge 0$. Similarly as the proof of Proposition \ref{prop_p_fmpl}, it is also equal to $O_{(X''_{\h};\bk_{\h})}e_{1}e_{0}^{n}$. Since $O_{(X''_{\h};\bk_{\h})}$ is an element of $\fH^{1}$ and \cite[Theorem 3.12]{osy21} ($\rt\in V_{\bullet}$ by the $\beta$-admissibility), it suffices to show
  \[\frac{(-1)^{s}}{2\pi i}\sum_{n=0}^{\infty}I_{\beta}(\tzero;e_{1}e_{0}^{l_{1}-1}\cdots e_{1}e_{0}^{l_{s}-1}e_{1}e_{0}^{n};\tzero)t^{n}\equiv\zeta_{\hcS}(l_{1},\ldots,l_{s})\qquad (\mod~2\pi i)\]
  for $l_{1},\ldots,l_{s}\in\bbZ_{\ge 1}$. This claim follows from \eqref{t_smpl_3}.
\end{proof}
\begin{theorem}\label{thm_t_shuffle}
  Let $k_{1},\ldots,k_{r},l_{1},\ldots,l_{s}$ be positive integers. Then the equality
  \begin{align}
    &\fmpl_{\hcS,\alpha}\left(\vv{x_{1},\ldots,x_{r}}{k_{1},\ldots,k_{r}}\sh\vv{y_{1},\ldots,y_{s}}{l_{1},\ldots,l_{s}}\right)\\
    &\qquad=(-1)^{l_{1}+\cdots+l_{s}}y_{1}^{\alpha}\sum_{f_{1},\ldots,f_{s}\ge 0}\left(\prod_{i=1}^{s}\binom{l_{i}+f_{i}-1}{f_{i}}\right)\fmpl_{\hcS,\alpha}\vv{x_{1}/y_{1},\ldots,x_{r}/y_{1},1/y_{1},y_{s}/y_{1},\ldots,y_{2}/y_{1}}{k_{1},\ldots,k_{r},l_{s}+f_{s},\ldots,l_{1}+f_{1}}t^{f_{1}+\cdots+f_{s}}
\end{align} 
holds in $\bbC\jump{\bx,\bx^{-1},y_{1},\ldots,y_{s},y_{1}^{-1},\ldots,y_{s}^{-1}}\jump{t}$.
\end{theorem}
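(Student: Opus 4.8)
The plan is to run the argument of Theorem~\ref{thm_p_shuffle} almost verbatim, with the family $\{\fmpl_{<p}\}_{p}$ replaced by the single refined object $\fmpl_{\hcRS,\alpha}$ of Definition~\ref{def_t_rsmpl}, the paths $\dch_{0,z}$ replaced by $\beta$, and the final answer read off modulo $2\pi i$ (recall $\fmpl_{\hcS,\alpha}$ is the reduction of $\fmpl_{\hcRS,\alpha}$, Proposition~\ref{prop_t_smpl}\eqref{t_smpl_3}). First I would take the very same $\{0,1,x_{1}^{-1},\ldots,x_{r}^{-1},y_{1}^{-1},\ldots,y_{s}^{-1}\}$-colored rooted tree $X=(V,E,\rt,C)$ and index $\bk$ used in the proof of Theorem~\ref{thm_p_shuffle}: two linear branches carrying $\vv{x_{1},\ldots,x_{r}}{k_{1},\ldots,k_{r}}$ and $\vv{y_{1},\ldots,y_{s}}{l_{1},\ldots,l_{s}}$, joined at the root $\rt$ with $C(\rt)=1$. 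For $u\in\{x_{1},\ldots,x_{r},y_{1},\ldots,y_{s}\}$ and $n\ge 0$ I form the auxiliary pair $(X'_{u};\bk_{n})$ of Definition~\ref{def_t_rsmpl}; exactly as in the proof of Proposition~\ref{prop_t_smpl}\eqref{t_smpl_2}, its $\beta$-harvestable reduction has a word obtained from the shuffle of the two branch words by the recursive rule of Definition~\ref{word_with_p_tree}, followed by $e_{0}^{n}$. Applying Theorem~\ref{thm_shuffle_theorem}, the shuffle multiplicativity and the path-composition and reversal formulas (Proposition~\ref{prop_ii_properties}\eqref{shuffle},~\eqref{reversal},~\eqref{pathcon}), together with the map $W$ and Proposition~\ref{prop_t_smpl}\eqref{t_smpl_3}, one obtains the analogue of \eqref{eq_p_shuffle_1}: modulo $2\pi i$,
\[\fmpl_{\hcRS,\alpha}(X;\bk)\equiv\fmpl_{\hcS,\alpha}\Bigl(\vv{x_{1},\ldots,x_{r}}{k_{1},\ldots,k_{r}}\sh\vv{y_{1},\ldots,y_{s}}{l_{1},\ldots,l_{s}}\Bigr),\]
which is the left-hand side.

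Second, I would evaluate $\fmpl_{\hcRS,\alpha}(X;\bk)$ again by moving the root of $X$ from $\rt$ to $w_{1}$, the far end of the $y$-branch. Applying Proposition~\ref{prop_root_change} to the integrals $I_{\beta}(X'_{u};\bk_{n})$ (using Proposition~\ref{prop_p_contraction_2} for the repeated edges) and running the algorithm in its proof explicitly --- exactly as in the $\bp$-adic case --- produces the same trees $X'$, $Y_{i,j}$, $Z_{i,j}$ that occur in the proof of Theorem~\ref{thm_p_shuffle}, so that up to the normalization of Definition~\ref{def_t_rsmpl}
\[\fmpl_{\hcRS,\alpha}(X;\bk)=(-1)^{l_{1}+\cdots+l_{s}}\fmpl_{\hcRS,\alpha}(X';\bk)+\sum_{i=1}^{s}\sum_{j=0}^{l_{i}-1}(-1)^{l_{i+1}+\cdots+l_{s}+j}\,\fmpl_{\hcRS,\alpha}(Y_{i,j};\bl_{i,j})\,\fmpl_{\hcRS,\alpha}(Z_{i,j};\bh_{i,j}).\]
Since $X'$, $Y_{i,j}$, $Z_{i,j}$ are linear, Proposition~\ref{prop_t_smpl}\eqref{t_smpl_3} rewrites each refined $t$-adic SMPL on the right, modulo $2\pi i$, as an ordinary $\fmpl_{\hcS,\alpha}$ or $\Li^{\sh}$; the leading term is exactly the $f_{1}=\cdots=f_{s}=0$ contribution of the right-hand side of the theorem.

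Finally I would reorganize the correction terms. Each $Z_{i,j}$ contributes a short reversed chain in the $y$-variables; expanding it by the binomial index-shift identity that already appears in \cite[Proposition~2.4]{ono17} and in the $\bp$-adic proof --- here in the form $\sum_{f\ge 0}\binom{m+f-1}{f}t^{f}$ --- and collapsing the resulting products by the shuffle relation for $\Li^{\sh}$ (Proposition~\ref{prop_ii_properties}\eqref{shuffle}) merges the double sum $\sum_{i}\sum_{j=0}^{l_{i}-1}$, together with the leading term, into the single sum $\sum_{f_{1},\ldots,f_{s}\ge 0}\bigl(\prod_{i=1}^{s}\binom{l_{i}+f_{i}-1}{f_{i}}\bigr)t^{f_{1}+\cdots+f_{s}}$ over the index $(k_{1},\ldots,k_{r},l_{s}+f_{s},\ldots,l_{1}+f_{1})$ with colors rescaled by $y_{1}^{-1}$, i.e.\ the right-hand side. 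Combining with the first step and reducing modulo $2\pi i$ completes the proof.

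I expect the main obstacle to be this last reorganization: showing that the Proposition~\ref{prop_root_change} correction terms assemble into the clean binomial sum in the same way the $p$-adic expansion does in the proof of Theorem~\ref{thm_p_shuffle}, while simultaneously carrying the extra $\fmpl_{\hcRS,\alpha}$ bookkeeping (the auxiliary vertex, the sum over $n$ weighted by $t^{n}$, and the sum over $u$ weighted by $u^{\alpha}$) and respecting that the identities hold only modulo $2\pi i$. A secondary point is to check that the word computation in the first step is uniform in the tail $e_{0}^{n}$, so that the two generating series in $t$ match coefficient by coefficient.
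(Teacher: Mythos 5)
Your first step (realizing the left-hand side as $\fmpl_{\hcRS,\alpha}(X;\bk)$ via Theorem \ref{thm_shuffle_theorem} and Proposition \ref{prop_t_smpl}) and your choice to move the root to $w_{1}$ match the paper. The gap is in where you expect the binomial sum in $t$ to come from. You model the endgame on the $\bp$-adic proof: keep the Proposition \ref{prop_root_change} correction terms and expand them by a binomial index-shift, so that they supply the $f_{1}+\cdots+f_{s}\ge 1$ terms while the root-changed ``leading'' term gives only the $f=0$ term. In the $t$-adic setting this mechanism is not available. Each correction factor is an iterated integral along the loop $\beta$ based at $\tzero$, and by path composition together with $I_{\dch^{-1}\circ\dch}=0$ every such integral is divisible by $2\pi i$; hence each product $I_{\beta}(Y_{i,j};\bl_{i,j})I_{\beta}(Z_{i,j};\bh_{i,j})$ is divisible by $(2\pi i)^{2}$, and after the $(2\pi i)^{-1}$ normalization in Definition \ref{def_t_rsmpl} it is still $\equiv 0\pmod{2\pi i}$. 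This is exactly how the paper disposes of these terms: they contribute nothing to $\fmpl_{\hcS,\alpha}$, so they cannot produce the $f\ge 1$ terms, and there is no analogue here of the $p$-adic expansion of $\fmpl_{=p-N}$ that drives your third step.

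Correspondingly, your identification of the leading term is off. After the root change the auxiliary vertex $w$ carrying $e_{0}^{n}$ remains attached at the old root $\rt$, which is now an interior branch point, so the relevant tree ($X''_{u}$ in the paper's notation) is \emph{not} linear and Proposition \ref{prop_t_smpl} \eqref{t_smpl_3} does not apply to it directly; your $\fmpl_{\hcRS,\alpha}(X';\bk)$, with a fresh auxiliary vertex re-attached at the new root $w_{1}$, is a genuinely different object. Making $X''_{u}$ harvestable forces the word $e_{0}^{n}$ to be shuffled against the $x$-branch word and then, after a regularization limit in which insertions at the new initial segment vanish as $z\to 0$, redistributed onto the $y$-exponents $l_{i}\mapsto l_{i}+f_{i}$. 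That redistribution is the sole source of the coefficients $\binom{l_{i}+f_{i}-1}{f_{i}}$ and of $t^{f_{1}+\cdots+f_{s}}$ (the sign $(-1)^{f_{1}+\cdots+f_{s}}$ combining with $(-t)^{n}$): the whole binomial sum comes from the leading term, not from the corrections. So the reorganization you flag as the ``main obstacle'' is not merely hard; as planned it would fail, and the missing ingredient is the shuffle-and-regularize treatment of the $e_{0}^{n}$ tail across the branch point, which the paper carries out explicitly.
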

\begin{proof}
First we put
  \[D\coloneqq\{1,x_{1},\ldots,x_{r},y_{1},\ldots,y_{s}\}\]
  and
  \[D'\coloneqq\{0\}\cup \{a/b\mid a,b\in D\}.\]
Consider the $\{0\}\cup\{a\mid a^{-1}\in D\}$-colored rooted tree $X$ and the index $\bk$ on $X$ consisting of the following data:
  \begin{align}
    X&=(V,E,\rt,C),\\
    V&=\{v_{1},\ldots,v_{r},w_{1},\ldots,w_{s},\rt\},\\
    E&=\{e_{i}=\{v_{i},v_{i+1}\}\mid i=1,\ldots,r~(v_{r+1}\coloneqq\rt)\}\cup\{d_{i}=\{w_{i},w_{i+1}\}\mid i=1,\ldots,s~(w_{s+1}\coloneqq\rt)\},\\
    C(v)&=\begin{cases}x_{i}^{-1} & \text{if }v=v_{i},~i=1,\ldots,r,\\y_{i}^{-1} & (v=w_{i},~i=1,\ldots,s),\\ 1 & \text{if }v=\rt,\end{cases}
  \end{align}
  and
    \[\bk(e)=\begin{cases}k_{i} & \text{if }e=e_{i},\\ l_{i} & \text{if }e=d_{i}.\end{cases}\]
  Take the same symbols as Definition \ref{def_t_rsmpl}. Proposition \ref{prop_p_representation_algorithm} and Theorem \ref{thm_shuffle_theorem} show
  \begin{equation}\label{eq_t_shuffle_1}
    I_{\beta}(X'_{u};\bk_{n})=I_{\beta}(\tzero;(e_{u/x_{1}}e_{0}^{k_{1}-1}\cdots e_{u/x_{r}}e_{0}^{k_{r}-1}\sh e_{u/y_{1}}e_{0}^{l_{1}-1}\cdots e_{u/y_{s}}e_{0}^{l_{s}-1})e_{u}e_{0}^{n};\tzero),
  \end{equation}
  for $u\in D$. Write $X''_{u}\coloneqq (V\cup\{w\},E\cup\{\{\rt,w\}\},w_{1},C'_{u})$.
  By using Propositions \ref{prop_p_contraction_2} and \ref{prop_root_change}, there exist two families $\{(Y_{j};\bl^{(j)})\}_{j=1,\ldots,M}$ and $\{(Z_{j};\bh^{(j)})\}_{j=1,\ldots,M}$ of $D'$-colored pairs and a sequence $\{a_{j}\}_{j=1,\ldots,M}$ of integers such that
  \[I_{\beta}(X'_{u};\bk_{n})=(-1)^{l_{1}+\cdots+l_{s}}I_{\beta}(X''_{\eta};\bk_{n})+\sum_{j=1}^{M}(-1)^{a_{j}}I_{\beta}(Y_{j};\bl^{(j)})I_{\beta}(Z_{j};\bh^{(j)}).\]
  Since $I_{\beta}(Y;\bl)/2\pi i$ is an element of $\bbQ\jump{\bx,\bx^{-1}}[2\pi i]$ for every $\{0,1,x_{1}^{-1},\ldots,x_{r}^{-1}\}$-colored rooted tree $Y$ and an essentially positive index $\bl$ on $Y$ (by using Theorem \ref{thm_shuffle_theorem} and the path composition formula as the proof of Proposition \ref{prop_t_smpl} \eqref{t_smpl_1}), we have
  \begin{align}\label{eq_t_shuffle_2}
    \begin{split}
    \fmpl_{\hcRS,\alpha}(X;\bk)
    &=\frac{(-1)^{r+s}}{2\pi i}\sum_{n=0}^{\infty}\sum_{u\in D}u^{\alpha}I_{\beta}(X'_{u};\bk_{n})t^{n}\\
    &\equiv\frac{(-1)^{r+s+l_{1}+\cdots+l_{s}}}{2\pi i}\sum_{n=0}^{\infty}\sum_{u\in D}I_{\beta}(X''_{u};\bk_{n})(-t)^{n}\qquad (\mod~2\pi i).
    \end{split}
  \end{align}
  Take $0<z<1/2$ such that $(X'';\bk_{n})$ is $\beta_{z}$-admissible. By the algorithm of Proposition \ref{prop_p_representation_algorithm} and Theorem \ref{thm_shuffle_theorem}, for $u\in D$ we compute
  \begin{align}
    &I_{\beta_{z}}(X''_{u};\bk_{n})\\
    &=I_{\beta_{z}}\left((\beta(z),(1-2z)\beta'(z));(e_{0}^{n}\sh e_{u/x_{1}}e_{0}^{k_{1}-1}\cdots e_{u/x_{r}}e_{0}^{k_{r}-1})\right.\\
    &\qquad\cdot \left.e_{u}e_{0}^{l_{s}-1}e_{u/y_{s}}\cdots e_{0}^{l_{1}-1}e_{u/x_{1}};(\beta(1-z),(2z-1)\beta'(1-z))\right)\\
    &=\sum_{j=0}^{n}\sum_{\substack{f_{1},\ldots,f_{r}\ge 0\\ f_{1}+\cdots+f_{r}=n-j}}\left(\prod_{a=1}^{r}\binom{k_{a}+f_{a}-1}{f_{a}}\right)I_{\beta_{z}}\left((\beta(z),(1-2z)\beta'(z));e_{0}^{j}e_{u/x_{1}}e_{0}^{k_{1}+f_{1}-1}\cdots e_{u/x_{r}}e_{0}^{k_{r}+f_{r}-1}\right.\\
    &\qquad\cdot\left.e_{u}e_{0}^{l_{s}-1}e_{u/y_{s}}\cdots e_{0}^{l_{1}-1}e_{u/y_{1}};(\beta(1-z),(2z-1)\beta'(1-z))\right)\\
    &=\sum_{0\le l\le j\le n}\sum_{\substack{f_{1},\ldots,f_{r}\ge 0\\ f_{1}+\cdots+f_{r}=n-j}}\sum_{\substack{g_{1},\ldots,g_{r+s}\ge 0\\ g_{1}+\cdots+g_{r+s}=l}}\left(\prod_{a=1}^{r}\binom{k_{a}+f_{a}-1}{f_{a}}\binom{k_{a}+f_{a}+g_{a}-1}{g_{a}}\right)\left(\prod_{a=1}^{s}\binom{l_{a}+g_{r+a}-1}{g_{r+a}}\right)\\
    &\qquad\cdot\frac{(-1)^{l}}{(j-l)!}I_{\beta_{z}}\left((\beta(z),(1-2z)\beta'(z));e_{0}^{j-l}\sh e_{u/x_{1}}e_{0}^{k_{1}+f_{1}+g_{1}-1}\cdots e_{u/x_{r}}e_{0}^{k_{r}+f_{r}+g_{r}-1}\right.\\
    &\qquad\cdot\left.e_{u}e_{0}^{l_{s}+g_{r+s}-1}e_{u/y_{s}}\cdots e_{0}^{l_{1}+g_{r+1}-1}e_{u/y_{1}};(\beta(1-z),(2z-1)\beta'(1-z))\right).
  \end{align}
  Using the shuffle product formula (Proposition \ref{prop_ii_properties} \eqref{shuffle}) and the fact
  \begin{align}
    &I_{\beta_{z}}((\beta(z),(1-2z)\beta'(z));e_{0}^{j-l};(\beta(1-z),(2z-1)\beta'(1-z)))\\
    &=\frac{1}{(j-l)!}(\log\beta(1-z)-\log\beta(z))^{j-l}\xrightarrow{z\to 0}\begin{cases}1 & \text{if }j=l,\\ 0 & \text{if }j\neq l,\end{cases}
  \end{align}
  we obtain
  \begin{align}
    &I_{\beta_{z}}(X''_{u};\bk_{n})\\
    &=\sum_{\substack{f_{1},\ldots,f_{2r+s}\ge 0\\ f_{1}+\cdots+f_{2r+s}=n}}(-1)^{f_{r+1}+\cdots+f_{2r+s}}\left(\prod_{a=1}^{r}\binom{k_{a}+f_{a}-1}{f_{a}}\binom{k_{a}+f_{a}+f_{r+a}-1}{f_{r+a}}\right)\left(\prod_{a=1}^{s}\binom{l_{a}+f_{2r+a}-1}{f_{2r+a}}\right)\\
    &\qquad\cdot I_{\beta_{z}}\left((\beta(z),(1-2z)\beta'(z));e_{u/x_{1}}e_{0}^{k_{1}+f_{1}+f_{r+1}-1}\cdots e_{u/x_{r}}e_{0}^{k_{r}+f_{r}+f_{2r}-1}\right.\\
    &\qquad\cdot\left.e_{u}e_{0}^{l_{s}+f_{2r+s}-1}e_{u/y_{s}}\cdots e_{0}^{l_{1}+f_{2r+1}-1}e_{u/y_{1}};(\beta(1-z),(2z-1)\beta'(1-z))\right)\\
    &=\sum_{\substack{f_{2r+1},\ldots,f_{2r+s}\ge 0\\ f_{2r+1}+\cdots+f_{2r+s}=n}}(-1)^{f_{2r+1}+\cdots+f_{2r+s}}\left(\prod_{a=1}^{s}\binom{l_{a}+f_{2r+a}-1}{f_{2r+a}}\right)I_{\beta_{z}}\left((\beta(z),(1-2z)\beta'(z));e_{u/x_{1}}e_{0}^{k_{1}-1}\cdots e_{u/x_{r}}e_{0}^{k_{r}-1}\right.\\
    &\qquad\cdot\left.e_{u}e_{0}^{l_{s}+f_{2r+s}-1}e_{u/y_{s}}\cdots e_{0}^{l_{1}+f_{2r+1}-1}e_{u/y_{1}};(\beta(1-z),(2z-1)\beta'(1-z))\right).
  \end{align}
  Note that the last integral is converges when $z\to 0$. Combining this result, \eqref{eq_t_shuffle_1} and \eqref{eq_t_shuffle_2}, we have
  \begin{align}
    &\fmpl_{\hcS,\alpha}\left(\vv{x_{1},\ldots,x_{r}}{k_{1},\ldots,k_{r}}\sh\vv{y_{1},\ldots,y_{s}}{l_{1},\ldots,l_{s}}\right)\\
    &\equiv \fmpl_{\hcRS,\alpha}(X;\bk)\qquad(\mod~2\pi i)\\
    &=\frac{(-1)^{r+s}}{2\pi i}\sum_{n=0}^{\infty}\sum_{u\in D}u^{\alpha}I_{\beta}(X'_{u};\bk_{n})t^{n}\\
    &\equiv\frac{(-1)^{r+s+l_{1}+\cdots+l_{s}}}{2\pi i}\sum_{n=0}^{\infty}\sum_{u\in D}u^{\alpha}I_{\beta}(X''_{u};\bk_{n})(-t)^{n}\qquad (\mod~2\pi i)\\
    &=\frac{(-1)^{r+s+l_{1}+\cdots+l_{s}}}{2\pi i}\sum_{n=0}^{\infty}\sum_{u\in D}u^{\alpha}\sum_{\substack{f_{2r+1},\ldots,f_{2r+s}\ge 0\\ f_{2r+1}+\cdots+f_{2r+s}=n}}(-1)^{f_{2r+1}+\cdots+f_{2r+s}}\left(\prod_{a=1}^{s}\binom{l_{a}+f_{2r+a}-1}{f_{2r+a}}\right)\\
    &\qquad\cdot I_{\beta}(\tzero;e_{u/x_{1}}e_{0}^{k_{1}-1}\cdots e_{u/x_{r}}e_{0}^{k_{r}-1}e_{u}e_{0}^{l_{s}+f_{2r+s}-1}e_{u/y_{s}}\cdots e_{0}^{l_{1}+f_{2r+1}-1}e_{u/y_{1}};\tzero)t^{n}\\
    &\equiv (-1)^{l_{1}+\cdots+l_{s}}y_{1}^{\alpha}\sum_{f_{1},\ldots,f_{s}\ge 0}\left(\prod_{j=1}^{s}\binom{l_{j}+f_{j}-1}{f_{j}}\right)\\
    &\qquad\cdot\fmpl_{\hcS,\alpha}\vv{x_{1}/y_{1},\ldots,x_{r}/y_{1},1/y_{1},y_{s}/y_{1},\ldots,y_{2}/y_{1}}{k_{1},\ldots,k_{r},l_{s}+f_{s},\ldots,l_{1}+f_{1}}t^{f_{1}+\cdots+f_{s}}\qquad (\mod~2\pi i).
  \end{align}
  Here we used Propsition \ref{prop_t_smpl} \eqref{t_smpl_3} in the second and last congruence.
\end{proof}
\begin{definition}
  For $\vv{\etab}{\bk}=\vv{\eta_{1},\ldots,\eta_{r}}{k_{1},\ldots,k_{r}}\in\mu_{N}^{r}\times\bbZ_{\ge 1}^{r}$, we define the \emph{$t$-adic symmetric multiple $L$-value} $L_{\hcS,\alpha}\vv{\etab}{\bk}$ as the projection on $(\cZ_{N}/\pi^{2}\cZ_{N})\jump{t}\simeq\cZ_{N}\jump{t}/\pi^{2}\cZ_{N}\jump{t}$ of
      \begin{align}
      L_{\hcS,\alpha}^{\sh}\vv{\etab}{\bk}&\coloneqq\sum_{i=0}^{r}(-1)^{k_{i+1}+\cdots+k_{r}}\eta_{i+1}^{\alpha}L^{\sh}\vv{\eta_{1}/\eta_{i+1},\ldots,\eta_{i}/\eta_{i+1}}{k_{1},\ldots,k_{i}}\\
        &\qquad\cdot\sum_{f_{i+1},\ldots,f_{r}\ge 0}\left(\prod_{j=i+1}^{r}\binom{k_{j}+f_{j}-1}{f_{j}}\right)L^{\sh}\vv{1/\eta_{i+1},\eta_{r}/\eta_{i+1},\ldots,\eta_{i+2}/\eta_{i+1}}{k_{r}+f_{r},\ldots,k_{i+1}+f_{i+1}}t^{f_{i+1}+\cdots+f_{r}}.
      \end{align}
    We also $\bbQ$-linearly extend the map $L_{\hcS,\alpha}$ to $\cR_{N}$.
\end{definition}
\begin{remark}
  Let us define
  \begin{align}
    L_{\hcS,\alpha}^{\ast}\vv{\eta_{1},\ldots,\eta_{r}}{k_{1},\ldots,k_{r}}&\coloneqq\sum_{i=0}^{r}(-1)^{k_{i+1}+\cdots+k_{r}}(\eta_{i+1}\cdots\eta_{r})^{\alpha}L^{\ast}\vv{\eta_{1},\ldots,\eta_{i}}{k_{1},\ldots,k_{i}}\\
      &\qquad\cdot\sum_{f_{i+1},\ldots,f_{r}\ge 0}\left(\prod_{j=i+1}^{r}\binom{k_{j}+f_{j}-1}{f_{j}}\right)L^{\ast}\vv{1/\eta_{r},\ldots,1/\eta_{i+1}}{k_{r}+f_{r},\ldots,k_{i+1}+f_{i+1}}t^{f_{i+1}+\cdots+f_{r}},
    \end{align}
  for positive integers $k_{1},\ldots,k_{r}$ and $\eta_{1},\ldots,\eta_{r}\in\mu_{N}$ by using the harmonic regularization $L^{\ast}\vv{\etab}{\bk}$ (we omit its definition). Then a similar computation to \cite[Proposition 2.1]{osy21} establishes that the value $L_{\hcS,\alpha}\vv{\eta_{1},\ldots,\eta_{r}}{k_{1},\ldots,k_{r}}$ also coincides with the projection on $(\cZ_{N}/\pi^{2}\cZ_{N})\jump{t}$ of $L_{\hcS,\alpha}^{\ast}\vv{\eta_{1}/\eta_{2},\ldots,\eta_{r-1}/\eta_{r},\eta_{r}}{k_{1},\ldots,k_{r}}$ (see also \cite[\S 3]{tasaka21}, \cite[Theorem 4.6]{sz20}).
\end{remark}
The following lemma is proved by the similar argument to \cite[Proposition 2.4]{osy21}.
\begin{lemma}\label{lem_convergence}
  For any $\vv{\etab}{\bk}=\vv{\eta_{1},\ldots,\eta_{r}}{k_{1},\ldots,k_{r}}\in\mu_{N}^{r}\times\bbZ_{\ge 1}^{r}$, the value $\fmpl_{\hcS,\alpha}\vv{\etab}{\bk}$ converges, coefficientwise as the power series of $t$, and coincides with $L_{\hcS,\alpha}\vv{\etab}{\bk}$.
\end{lemma}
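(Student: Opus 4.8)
The plan is to imitate the proof of \cite[Proposition 2.4]{osy21}, carrying along the roots of unity $\eta_{i}$ and the twisting factors $\eta_{i+1}^{\alpha}$. Fix $\vv{\etab}{\bk}=\vv{\eta_{1},\ldots,\eta_{r}}{k_{1},\ldots,k_{r}}\in\mu_{N}^{r}\times\bbZ_{\ge 1}^{r}$. For each $n$ only the finitely many tuples $(f_{i+1},\ldots,f_{r})$ with $f_{i+1}+\cdots+f_{r}=n$ contribute to the coefficient of $t^{n}$ on the right-hand side of Definition \ref{def_t_smpl}, so it suffices to argue coefficientwise in $t$; the claim is then that the specialization $\bx\mapsto\etab$ of the power series $\fmpl_{\hcS,\alpha}\vv{\bx}{\bk}\in\bbC\jump{\bx,\bx^{-1}}\jump{t}$ converges and has value $L_{\hcS,\alpha}\vv{\etab}{\bk}$.

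Each $\Li^{\sh}$ occurring in Definition \ref{def_t_smpl} is, before specialization, an admissible multiple-polylogarithm series in the $x_{j}$ (no argument is literally $1$), so its power-series expansion converges near $\bx=\etab$. Upon setting $\bx=\etab$, however, ratios of the $\eta_{i}$ may degenerate to $1$; to control this I would first rewrite, once $\etab$ is fixed, every such $\Li^{\sh}$ in its shuffle-regularized normal form with respect to the letter $e_{1}$, i.e.\ as a polynomial in the single regularization parameter $\Li^{\sh}\vv{1}{1}$ whose coefficients are convergent at $\etab$ by \eqref{eq_mlv}. Then $\fmpl_{\hcS,\alpha}\vv{\etab}{\bk}$ becomes a polynomial in $\Li^{\sh}\vv{1}{1}$ with coefficients in $\cZ_{N}\jump{t}$, and the whole point is to show this polynomial has degree $0$.

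The vanishing of the positive-degree part is the combinatorial heart, and is exactly the content of \cite[Proposition 2.4]{osy21}: the sign $(-1)^{k_{i+1}+\cdots+k_{r}}$ together with the reversal of the argument tuple in the second factor of Definition \ref{def_t_smpl} makes the divergent contributions telescope across the $r+1$ summands indexed by $i$. This is already visible in the depth-one case $\fmpl_{\hcS,\alpha}\vv{1}{1}=\Li^{\sh}\vv{1}{1}-\sum_{f\ge 0}\Li^{\sh}\vv{1}{1+f}t^{f}$, where the two copies of $\Li^{\sh}\vv{1}{1}$ cancel and leave a convergent series. Once this is in place, the surviving degree-$0$ part is literally the shuffle-regularized combination $L^{\sh}_{\hcS,\alpha}\vv{\etab}{\bk}$ (setting $\Li^{\sh}\vv{1}{1}=0$ is harmless since it no longer appears), so $\fmpl_{\hcS,\alpha}\vv{\etab}{\bk}$ converges to $L^{\sh}_{\hcS,\alpha}\vv{\etab}{\bk}$, whose image in $(\cZ_{N}/\pi^{2}\cZ_{N})\jump{t}$ is $L_{\hcS,\alpha}\vv{\etab}{\bk}$; as a consistency check, specializing the computation in the proof of Proposition \ref{prop_t_smpl}\,\eqref{t_smpl_3} to the linear tree with colors $\eta_{i}^{-1}$ identifies this value with $\fmpl_{\hcRS,\alpha}$ of that tree modulo $2\pi i$. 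The main obstacle is precisely this cancellation: since individual $i$-summands genuinely diverge at $\bx=\etab$ when consecutive $\eta$'s coincide, one cannot specialize termwise, and must carry out the antipode-type identity behind \cite[Proposition 2.4]{osy21} while keeping explicit track of the colors.
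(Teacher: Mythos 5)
Your overall route is the one the paper intends: the paper's ``proof'' of Lemma \ref{lem_convergence} consists of the single sentence that it follows by the argument of \cite[Proposition 2.4]{osy21}, and you likewise propose to rerun that argument with the colors $\eta_{i}$ and the factors $\eta_{i+1}^{\alpha}$ carried along, which is the right idea. The genuine gap is in the mechanism you interpose. Rewriting each specialized $\Li^{\sh}$ ``in shuffle-regularized normal form'' and showing that the resulting polynomial in the symbol $\Li^{\sh}\vv{1}{1}$ has degree zero is a purely word-level manipulation; by itself it says nothing about convergence, which is the actual assertion of the lemma. A divergent specialized series is not a number, so one cannot ``rewrite it'' as a polynomial in $\Li^{\sh}\vv{1}{1}$ and then set that symbol to zero; the step ``degree zero, hence the specialized expression converges to the degree-zero part'' needs the analytic bridge on which the proof of \cite[Proposition 2.4]{osy21} actually runs, namely an asymptotic statement for each factor: either that the simultaneously truncated colored sums $L_{<M}$ admit expansions as polynomials in $\log M+\gamma$ whose constant terms are regularized multiple $L$-values, or an Abel-limit analogue with powers of $\log(1-z)$ and shuffle-regularized constant terms, followed by the cancellation of the positive powers in the symmetric combination. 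Your ``telescoping'' is the shadow of that cancellation, but without the asymptotic input the convergence claim is not derived, and note also that evaluation of the Laurent-series coefficients at $\etab$ only makes sense after fixing such a summation scheme (the arguments in Definition \ref{def_t_smpl} are ratios $x_{a}/x_{i+1}$, so a naive radial specialization does not even keep all arguments inside the unit disc).

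Two smaller points. First, the truncation notion of convergence pairs naturally with the \emph{harmonic} regularization, whereas you regularize with respect to the shuffle product; to land exactly on the $L^{\sh}$-based combination defining $L_{\hcS,\alpha}$ one must either work with Abel limits throughout or add the $\ast$/$\sh$ comparison (this is the content of the remark following the definition of $L_{\hcS,\alpha}$, via \cite[Proposition 2.1]{osy21}); since $L_{\hcS,\alpha}$ is only defined modulo $\pi^{2}\cZ_{N}$ this is repairable, but it is a step, not a triviality. Second, the cancellation itself is not literally \cite[Proposition 2.4]{osy21}, which is the case $N=1$: the colored antipode-type identity with the twists $\eta_{i+1}^{\alpha}$ and the ratios $\eta_{a}/\eta_{i+1}$ has to be carried out anew, exactly the part you flag at the end but do not execute --- which, to be fair, is also all the paper itself offers.
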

\begin{corollary}
  Let $k_{1},\ldots,k_{r},l_{1},\ldots,l_{s}$ be positive integers and $\eta_{1},\ldots,\eta_{r},\xi_{1},\ldots,\xi_{s}$ elements of $\mu_{N}$. Then we have  
  \begin{align}
    &L_{\hcS,\alpha}\left(\vv{\eta_{1},\ldots,\eta_{r}}{k_{1},\ldots,k_{r}}\sh\vv{\xi_{1},\ldots,\xi_{s}}{l_{1},\ldots,l_{s}}\right)\\
    &\qquad=(-1)^{l_{1}+\cdots+l_{s}}\xi_{1}^{\alpha}\sum_{f_{1},\ldots,f_{s}\ge 0}\left(\prod_{i=1}^{s}\binom{l_{i}+f_{i}-1}{f_{i}}\right)L_{\hcS,\alpha}\vv{\eta_{1}/\xi_{1},\ldots,\eta_{r}/\xi_{1},1/\xi_{1},\xi_{s}/\xi_{1},\ldots,\xi_{2}/\xi_{1}}{k_{1},\ldots,k_{r},l_{s}+f_{s},\ldots,l_{1}+f_{1}}t^{f_{1}+\cdots+f_{s}}.
\end{align} 
\end{corollary}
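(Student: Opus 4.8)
The plan is to obtain this corollary from Theorem~\ref{thm_t_shuffle} by specializing the indeterminates $x_{1},\ldots,x_{r},y_{1},\ldots,y_{s}$ to the roots of unity $\eta_{1},\ldots,\eta_{r},\xi_{1},\ldots,\xi_{s}$, in exactly the way that Corollary~\ref{cor_p_shuffle} is deduced from Theorem~\ref{thm_p_shuffle}. The difference is that the $\bp$-adic argument uses the induced ring homomorphism of Remark~\ref{rem_induce}, which has no counterpart here; instead the role of that homomorphism is played by Lemma~\ref{lem_convergence}, which says that substituting roots of unity into $\fmpl_{\hcS,\alpha}\vv{\bx}{\bk}$ produces, coefficientwise in $t$, a convergent series equal to $L_{\hcS,\alpha}\vv{\etab}{\bk}$.

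Concretely, I would first note that the assignment $1\mapsto 1$, $x_{i}\mapsto\eta_{i}$, $y_{j}\mapsto\xi_{j}$ (extended multiplicatively to the quotients $x_{i}/y_{1}$, $1/y_{1}$, $y_{j}/y_{1}$, which all land in $\mu_{N}$) induces a $\bbQ$-linear map $\mathrm{sp}$ from $\cR$ (enlarged to carry the letters $y_{1},\ldots,y_{s}$) to $\cR_{N}$. Since the shuffle product on $\cR$ is transported through $W$ from the purely combinatorial shuffle on the free algebra, and $\mathrm{sp}$ merely relabels letters, $\mathrm{sp}$ is a homomorphism for $\sh$; in particular
\[
  \mathrm{sp}\!\left(\vv{x_{1},\ldots,x_{r}}{k_{1},\ldots,k_{r}}\sh\vv{y_{1},\ldots,y_{s}}{l_{1},\ldots,l_{s}}\right)=\vv{\eta_{1},\ldots,\eta_{r}}{k_{1},\ldots,k_{r}}\sh\vv{\xi_{1},\ldots,\xi_{s}}{l_{1},\ldots,l_{s}}
\]
in $\cR_{N}$. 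Next I would fix the total $t$-degree $n$ and look at the coefficient of $t^{n}$ on each side of the identity in Theorem~\ref{thm_t_shuffle}: on the right-hand side only the finitely many tuples $(f_{1},\ldots,f_{s})$ with $f_{1}+\cdots+f_{s}=n$ contribute, so this coefficient is a finite $\bbQ$-linear combination of values $\fmpl_{\hcS,\alpha}$ of explicit tuples in the variables $x_{i},y_{j}$, and the same holds on the left after expanding the shuffle product. Applying the substitution and invoking Lemma~\ref{lem_convergence} (together with its $\bbQ$-linear extension to $\cR_{N}$) turns each specialized $\fmpl_{\hcS,\alpha}$ into the corresponding $L_{\hcS,\alpha}$, turns the prefactor $y_{1}^{\alpha}$ into $\xi_{1}^{\alpha}$, and leaves the binomial coefficients and signs untouched; reassembling the $t^{n}$-coefficients for all $n$ yields the asserted identity in $(\cZ_{N}/\pi^{2}\cZ_{N})\jump{t}$.

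The one point that needs a little care --- and the place I would expect the (modest) difficulty to lie --- is the legitimacy of carrying out the substitution coefficientwise, i.e.\ that it commutes with the infinite sum over $f_{1},\ldots,f_{s}$ on the right-hand side and with the $\bbQ$-linear extension of $L_{\hcS,\alpha}$. This is exactly what the finiteness of each $t^{n}$-coefficient secures, so once Lemma~\ref{lem_convergence} is in hand no further analytic estimate is required, and the proof is essentially the same bookkeeping as in Corollary~\ref{cor_p_shuffle}.
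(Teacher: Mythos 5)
Your argument is correct and follows the same route as the paper, which deduces the corollary directly from Theorem~\ref{thm_t_shuffle} together with Lemma~\ref{lem_convergence}; your extra remarks about specializing the indeterminates coefficientwise in $t$ and using the finiteness of each $t^{n}$-coefficient just make explicit the bookkeeping that the paper leaves implicit.
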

\begin{proof}
  Immediately follows from Theorem \ref{thm_t_shuffle} and Lemma \ref{lem_convergence}. 
\end{proof}

\end{document}